\newcommand{\R}{\mathbb{R}}
\newcommand{\Z}{\mathbb{Z}}
\newcommand{\scr}{\mathcal}
\newcommand{\wt}{\widetilde}
\newtheorem{thm}{Theorem}[section]
\newtheorem{lemma}[thm]{Lemma}
\newtheorem{cor}[thm]{Corollary}
\newtheorem{prop}[thm]{Proposition}
\theoremstyle{definition}
\newtheorem{dfn}[thm]{Definition}
\newtheorem{rem}[thm]{Remark}
\newtheorem{ex}[thm]{Example}
\definecolor{KS}{RGB}{50,100,150}
\definecolor{BC}{RGB}{150,100,0}
\author{Baptiste Chantraine}
\address{Nantes Université, CNRS, Laboratoire de Mathématiques Jean Leray, LMJL,
	UMR 6629, F-44000 Nantes, France}
\email{baptiste.chantraine@univ-nantes.fr}
\author{Kevin Sackel}
\address{Department of Mathematics and Statistics, University of Massachusetts, Amherst, MA 01003, USA}
\email{ksackel@umass.edu}
\title{Products of locally conformal symplectic manifolds}
\begin{document}

\maketitle

\begin{abstract}
Given two locally conformal symplectic (LCS) structures on manifolds $M_1$ and $M_2$, we construct a natural $\R^+$-torsor of locally conformal symplectic structures on a certain covering space $M_1 \boxplus M_2$ of $M_1 \times M_2$. As the smooth construction of $M_1 \boxplus M_2$ is natural from the perspective of flat line bundles, we use this language to phrase the LCS theory. This construction shares many properties with, and in a sense generalizes, the standard symplectic product. Notably, for a Hamiltonian isotopy $\phi_t$ of an LCS manifold $M$, there is an associated Lagrangian embedding $\Gamma(\phi_1) \colon M \hookrightarrow M \boxplus M$, in which certain fixed points of $\phi_1$ are in bijection with intersection points of $\Gamma(\phi_1)$ with the diagonal $\Delta = \Gamma(\mathrm{id})$. Using a Lagrangian intersection of result of the first author and E. Murphy, we may conclude that if $\phi_t$ is a $C^0$-small Hamiltonian isotopy, then the number of fixed points of $\phi_1$ is bounded below by the rank of the Novikov theory associated to the Lee class of the LCS structure on $M$. Finally, we end the paper by constructing the suspension of a Lagrangian submanifold along a Hamiltonian isotopy in the LCS theory, again generalizing the symplectic setting.
\end{abstract}

\section{Introduction.}
\label{sec:introduction}

Locally conformal symplectic (LCS) manifolds are manifolds on which we can perform Hamiltonian dynamics. There are many equivalent definitions of LCS manifolds; for the purposes of this paper, we use a formulation in terms of flat oriented line bundles. Background details of flat line bundles which are necessary to understand the following definition will be recalled in Section \ref{sec:flat-line-bundles}. An equivalent definition of LCS manifolds which is quite common in the literature and which is useful for computations will be recalled in Definition \ref{dfn:LCS_gauge}.

\begin{dfn} \label{dfn:LCS}
	An \textbf{LCS manifold} consists of the data of
	\begin{itemize}
		\item a smooth manifold $M$,
		\item a flat oriented line bundle $\pi \colon (E,\nabla) \rightarrow M$, and
		\item an $E$-valued differential $2$-form $\omega \in \Omega^2(M;E)$ which is
		\begin{itemize}
			\item \textbf{closed}, i.e. $d_\nabla \omega = 0$, and
			\item \textbf{non-degenerate}, i.e. the interior product map $\iota_{\bullet}\omega \colon TM \rightarrow T^*M \otimes E$ is an isomorphism of vector bundles.
		\end{itemize}
	\end{itemize}
	An \textbf{LCS map} $(\phi,\Phi) \colon (M_1,E_1,\nabla_1,\omega_1) \rightarrow (M_2,E_2,\nabla_2,\omega_2)$ is a morphism of flat oriented line bundles
	$$\xymatrix{(E_1,\nabla_1) \ar[r]^{\Phi} \ar[d] & (E_2,\nabla_2) \ar[d] \\ M_1 \ar[r]^{\phi} & M_2}$$
	such that $\Phi^*\omega' = \omega$.
\end{dfn}

\begin{rem}
	Though we work with this definition, it obfuscates the terminology `locally conformal symplectic'. To recover the terminology, note that any flat line bundle is locally trivial, i.e. around any point $p$, there is an open neighborhood $U$ and an isomorphism of flat oriented line bundles $\Psi \colon (E|_U,\nabla) \cong (\underline{\R}|_U,d)$, where $\underline{\R}$ is the trivial line bundle. Under such an identification, $\omega|_U$ corresponds to a symplectic form. In this manner, an LCS manifold comes with a collection of local symplectic forms. Furthermore, $\Psi$ is determined up to multiplication by a positive constant (c.f. Proposition \ref{prop:flat-line-bundle-facts} below), so $\omega$ is only defined up to multiplication by a positive constant: this explains the `conformal' part of the terminology.
\end{rem}

\begin{rem} \label{rem:base_only}
	If $(\phi,\Phi)$ and $(\phi,\Phi')$ are LCS maps between the same LCS manifolds, then $\Phi = \Phi'$ (as we will discuss in Section \ref{sec:LCS}). Hence, we may refer to the LCS map by $\phi$ without reference to $\Phi$. However, not every smooth map from $M_1$ to $M_2$ is the base of an LCS map.
\end{rem}

It appears to be an unfortunate fact of LCS geometry that if $M_1$ and $M_2$ are given LCS structures, then there is no obvious way to construct an LCS structure on the smooth manifold $M_1 \times M_2$. The main goal of this paper is the proposal of a notion of product of LCS manifolds that appears well-suited to generalizing typical applications of the standard symplectic product to the LCS setting.

Although we are interested in the LCS setting, underlying our LCS product is a smooth version for manifolds with chosen first cohomology classes, which is part of the data of an LCS manifold. Indeed, recall that to any flat line bundle $\pi \colon (E,\nabla) \rightarrow M$, there is a holonomy class $\mathrm{hol}(\nabla) \in H^1(M;\R^*)$. Since on an LCS manifold, the line bundles are oriented, the holonomy class lives in $H^1(M;\R^+)$. Applying the natural logarithm yields a class in $H^1(M;\R)$.

\begin{dfn}
	For an LCS manifold $(M,E,\nabla,\omega)$, the cohomology class
	$$\ln(\mathrm{hol}(\nabla)) \in H^1(M;\R)$$
	is called the \textbf{Lee class}.
\end{dfn}

Smoothly, the product we construct is as follows, the content of which is an elementary byproduct of the theory of covering spaces.
\begin{thm} \label{thm:main_smooth}
	Suppose $M_1$ and $M_2$ are two smooth manifolds with specified cohomology class $\mathfrak{l}_1 \in H^1(M_1;\R)$ and $\mathfrak{l}_2 \in H^1(M_2;\R)$. Then there exists a smooth manifold $M_1 \boxplus M_2$ (depending upon $\mathfrak{l}_1$ and $\mathfrak{l}_2$) together with a covering map
	$$r \colon M_1 \boxplus M_2 \rightarrow M_1 \times M_2,$$
	canonical up to deck transformations and satisfying the following properties:
	\begin{enumerate}
		\item We have $(\pi_1 \circ r)^*\mathfrak{l}_1 = (\pi_2 \circ r)^*\mathfrak{l}_2 \in H^1(M_1 \boxplus M_2;\R)$. In fact, $r$ is the minimal such covering of $M_1 \times M_2$ with this property.
		\item Given any smooth map $\phi \colon M_1 \rightarrow M_2$ such that $\phi^*\mathfrak{l}_2 = \mathfrak{l}_1$, there is a smooth embedding
		$$\Gamma(\phi) \colon M_1 \rightarrow M_1 \boxplus M_2,$$
		canonical up to the deck group action of $r$, which lifts the usual graph
		$$\Gamma_0(\phi) \colon M_1 \rightarrow M_1 \times M_2,$$
		i.e. with $\Gamma_0(\phi) = r \circ \Gamma(\phi)$.
		\item If $(M_1,\mathfrak{l}_1) = (M_2,\mathfrak{l}_2)$, in which case we write simply $(M,\mathfrak{l})$ for the pair, then the covering map $r$ is canonical. In such a case, given any smooth family of diffeomorphisms $\phi_t \in \mathrm{Diff}(M)$ starting at the identity, the lifts $\Gamma(\phi_t)$ may be chosen canonically, and there is a canonical smooth family of diffeomorphisms $\psi_t \in \mathrm{Diff}(M \boxplus M)$ lifting $\mathrm{Id} \times \phi_t \in \mathrm{Diff}(M \times M)$ such that $\Gamma(\phi_t) = \psi_t \circ \Gamma(\mathrm{Id})$. Furthermore, the intersection points $\Gamma(\mathrm{Id}) \cap \Gamma(\phi_1)$ are in bijective correspondence with those fixed points $x \in \mathrm{Fix}(\phi_1)$ such that
		$$\langle \mathfrak{l}, [\phi_t(x)] \rangle = 0,$$
		and such that an intersection point in $\Gamma(\mathrm{Id}) \cap \Gamma(\phi_1)$ is transverse if and only if the corresponding fixed point of $\phi_1$ is non-degenerate.
	\end{enumerate}
\end{thm}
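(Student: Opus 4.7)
The plan is to derive everything from elementary covering space theory applied to the single cohomology class
$$\alpha := \pi_1^*\mathfrak{l}_1 - \pi_2^*\mathfrak{l}_2 \in H^1(M_1 \times M_2;\R),$$
regarded via Hurewicz as a homomorphism $\alpha \colon \pi_1(M_1 \times M_2) \to \R$. I would define $M_1 \boxplus M_2$ as the regular covering space associated to the normal subgroup $K := \ker \alpha$; essentially by construction this is the minimal cover on which $\alpha$ pulls back to zero, yielding property (1). The deck group is $\pi_1(M_1 \times M_2)/K \cong \mathrm{im}(\alpha) \subset \R$.

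For property (2), the computation $\Gamma_0(\phi)^*\alpha = \mathfrak{l}_1 - \phi^*\mathfrak{l}_2 = 0$ shows that the image of $\pi_1(\Gamma_0(\phi))$ lies in $K$, so the standard lifting criterion produces a lift $\Gamma(\phi) \colon M_1 \to M_1 \boxplus M_2$, unique up to a deck transformation; it is an embedding since $\Gamma_0(\phi)$ is and $r$ is a local diffeomorphism.

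Property (3) is the most intricate but uses only path lifting and the local-diffeomorphism property of $r$. With $(M_1,\mathfrak{l}_1) = (M_2,\mathfrak{l}_2) = (M,\mathfrak{l})$, the construction of $K$ is symmetric in the two factors, so $r$ is canonical up to deck transformations. I would fix once and for all a single lift $\Gamma(\mathrm{Id})$ of the diagonal $\Delta_0$; then the unique isotopy lift $\psi_t$ of $\mathrm{Id} \times \phi_t$ through $\psi_0 = \mathrm{Id}_{M \boxplus M}$ is canonical, and setting $\Gamma(\phi_t) := \psi_t \circ \Gamma(\mathrm{Id})$ provides a continuous family of lifts of $\Gamma_0(\phi_t)$ that agrees with $\Gamma(\mathrm{Id})$ at $t = 0$; uniqueness of path lifts forces it to be the correct family. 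For the intersection dictionary, any point of $\Gamma(\mathrm{Id}) \cap \Gamma(\phi_1)$ projects via $r$ to $(x,x) = (x,\phi_1(x))$, producing a fixed point of $\phi_1$. Conversely, for such a fixed point $x$, the two points $\Gamma(\mathrm{Id})(x)$ and $\Gamma(\phi_1)(x) = \psi_1(\Gamma(\mathrm{Id})(x))$ lie in the same fibre of $r$, and coincide if and only if the path $t \mapsto \psi_t(\Gamma(\mathrm{Id})(x))$ is closed, equivalently (by path lifting) if and only if the downstairs loop $t \mapsto (x,\phi_t(x))$ pairs trivially with $\alpha$. Since the first coordinate is constant, this pairing equals $-\langle \mathfrak{l},[\phi_t(x)]\rangle$, which is the asserted criterion. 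Transversality of the two graphs at an intersection point upstairs transports through the local diffeomorphism $r$ to transversality of $\Delta_0$ and $\Gamma_0(\phi_1)$ at $(x,x)$, which is the classical characterization of $x$ being a non-degenerate fixed point.

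The main obstacle I anticipate is purely bookkeeping: tracking basepoints consistently and clarifying that \emph{canonical} in part (3) is shorthand for \emph{determined once a single choice of the diagonal lift} $\Gamma(\mathrm{Id})$ \emph{has been fixed}, which is the only non-automatic input in the whole construction.
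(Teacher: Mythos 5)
Your proposal is correct, and the covering space you build is literally the same one as in the paper: the subgroup $\ker\alpha \leq \pi_1(M_1\times M_2) = \pi_1(M_1)\times\pi_1(M_2)$ for $\alpha = \pi_1^*\mathfrak{l}_1 - \pi_2^*\mathfrak{l}_2$ is exactly the paper's fiber product $\mathbf{\Gamma} = \{(g_1,g_2) : \rho_1(g_1)=\rho_2(g_2)\}$, so your $M_1\boxplus M_2$ coincides with $\mathbf{M}_1\times\mathbf{M}_2/\mathbf{\Gamma}$. The packaging differs in two ways worth noting. First, by classifying the cover directly by $\ker\alpha$ you make item (1), and in particular minimality, essentially tautological, and you avoid the paper's auxiliary proposition showing independence of the choice of trivializing covers $\wt{M}_i$; the paper's quotient-of-covers description is heavier here but is the form actually needed downstream, where the LCS structures live on $r_1^*E_1$ and the deck group $\Gamma_1\times\Gamma_2/\Gamma_{12}$ must act explicitly on the forms. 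Second, for item (3) the paper constructs $\psi_t$ by hand as the descent of the $\Gamma$-equivariant map $(x,y)\mapsto(x,\wt{\phi}_t(y))$ on $\wt{M}\times\wt{M}$, which makes the identity $\Gamma(\phi_t)=\psi_t\circ\Gamma(\mathrm{Id})$ and the diffeomorphism property immediate, whereas you invoke the homotopy lifting property to get the unique isotopy lift of $\mathrm{Id}\times\phi_t$ starting at the identity; this is shorter but leaves the (routine) check that each $\psi_t$ is invertible to a lift of the inverse isotopy. Your fixed-point dictionary via closedness of the lifted path $t\mapsto\psi_t(\Gamma(\mathrm{Id})(x))$ and the pairing $\alpha([(x,\phi_t(x))]) = -\langle\mathfrak{l},[\phi_t(x)]\rangle$ is the same computation the paper performs upstairs in $\wt{M}\times\wt{M}$ using freeness of the diagonal action, and your closing caveat about what ``canonical'' means in (3) is the honest reading: the canonicity comes from the fact that the diagonal admits a distinguished lift, which in the paper is visible as the descent of $\Delta\colon\wt{M}\to\wt{M}\times\wt{M}$.
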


Our LCS result is then the construction of an LCS structure on this product with nice properties. The following may be considered the main theorem of this paper.
\begin{thm} \label{thm:main_LCS}
	Let $\scr{M}_1 = (M_1,E_1,\nabla_1,\omega_1)$ and $\scr{M}_2 = (M_2,E_2,\nabla_2,\omega_2)$ be two connected LCS manifolds with Lee classes $\mathfrak{l}_1 \in H^1(M_1;\R)$ and $\mathfrak{l}_2 \in H^1(M_2;\R)$, respectively. Let
	$$r \colon \scr{M}_1 \boxplus \scr{M}_2 \rightarrow M_1 \times M_2$$
	denote the product construction of Theorem \ref{thm:main_smooth} with respect to the Lee classes $\mathfrak{l}_1$ and $\mathfrak{l}_2$. Then:
	\begin{enumerate}
		\item There is a natural family of LCS structures $\Xi$ on $\scr{M}_1 \boxplus \scr{M}_2$, where $\Xi$ is an $\R^+$-torsor.
		\item The deck group action of $r$ intertwines the LCS structures freely, i.e. if $\Gamma$ is the deck group, then there is an injective group homomorphism $\rho \colon \Gamma \rightarrow \R$ such that for each $g \in \Gamma$ and each $\Omega \in \Xi$, we have that the corresponding $\phi_g \in \mathrm{Diff}(\scr{M}_1\boxplus \scr{M}_2)$ is an LCS symplectomorphism from $\Omega$ to $e^{\rho(g)}\cdot \Omega$.
		\item The Lee class of any LCS structure in $\Xi$ is $(\pi_1 \circ r)^*\mathfrak{l}_1 = (\pi_2 \circ r)^*\mathfrak{l}_2$. 
		\item Suppose we fix a lift $\Gamma(\phi)$ for a smooth map $\phi \colon M_1 \rightarrow M_2$. Then $\phi$ is an LCS map if and only if there exists an LCS structure $\Omega_{\Gamma(\phi)} \in \Xi$ such that $\Gamma(\phi)$ is an isotropic embedding, in which case this LCS structure is uniquely determined.
		\item If $\phi_t$ is a smooth family of LCS automorphisms of $\scr{M}$, then each corresponding diffeomorphism $\psi_t$ of $\scr{M} \boxplus \scr{M}$ is an LCS symplectomorphism from $\Omega_{\Gamma(\mathrm{Id})}$ to $\Omega_{\Gamma(\phi_t)}$. Explicitly, if $\phi_t$ is generated by an LCS vector field $X_t$ with expansion coefficient $\mu_t$ (see Definition \ref{dfn:LCS_vf}), then
		$$\Omega_{\Gamma(\phi_t)} = e^{\int_0^t \mu_{\tau}d\tau} \cdot \Omega_{\Gamma(\mathrm{Id})}.$$
		\item If $\phi_t$ is an LCS-Hamiltonian isotopy of $\scr{M}$ (see Definition \ref{dfn:LCS-Hamiltonian}), then $\Omega_{\Gamma(\phi_t)}=\Omega_{\Gamma(\operatorname{Id})}$ and the corresponding $\psi_t$ is an LCS-Hamiltonian isotopy of this LCS structure.
	\end{enumerate}
\end{thm}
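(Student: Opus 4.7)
The plan is to build the LCS form on $\scr{M}_1 \boxplus \scr{M}_2$ by transferring $\omega_1$ and $\omega_2$ into a common flat line bundle on the cover. Concretely, set $\wt{E}_i = (\pi_i \circ r)^*E_i$, equipped with pulled-back flat connections. By Theorem \ref{thm:main_smooth}(1) the holonomies of $\wt{E}_1$ and $\wt{E}_2$ coincide, hence by Proposition \ref{prop:flat-line-bundle-facts} the space of flat oriented isomorphisms $\alpha \colon \wt{E}_2 \to \wt{E}_1$ forms an $\R^+$-torsor. For each such $\alpha$, define
$$\Omega_\alpha := (\pi_1\circ r)^*\omega_1 - \alpha_*(\pi_2\circ r)^*\omega_2 \in \Omega^2(\scr{M}_1\boxplus\scr{M}_2;\wt{E}_1),$$
and take $\Xi := \{\Omega_\alpha\}$. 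Closedness under $d_{\nabla}$ is inherited from $d_{\nabla_i}\omega_i = 0$ via naturality and the flatness of $\alpha$. Non-degeneracy is pointwise: the covering $r$ is a local diffeomorphism, so tangentially we see a product of $M_1$ and $M_2$ directions, and in this splitting $\Omega_\alpha$ has no mixed terms and restricts to pulled-back copies of the non-degenerate forms $\omega_i$. This establishes (1), and (3) is immediate from functoriality of holonomy.

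For (2), each deck transformation $\phi_g$ covers the identity on $M_1\times M_2$ and therefore preserves the pulled-back bundles, connections, and $2$-forms. The pulled back morphism $\phi_g^*\alpha$ remains a flat oriented isomorphism $\wt{E}_2 \to \wt{E}_1$, hence equals $e^{\rho(g)}\alpha$ for a unique $\rho(g) \in \R$. The functoriality of pullback forces $\rho$ to be a group homomorphism, and injectivity follows from the minimality in Theorem \ref{thm:main_smooth}(1), since $\ker\rho$ would yield a smaller intermediate cover on which the two pulled-back Lee classes already agree. The pullback formula $\phi_g^*\Omega_\alpha = \Omega_{e^{\rho(g)}\alpha}$ is then the claimed torsor translation.

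Item (4) is the key algebraic content. Given a smooth $\phi \colon M_1 \to M_2$ with $\phi^*\mathfrak{l}_2 = \mathfrak{l}_1$ and a chosen lift $\Gamma(\phi)$, the identities $(\pi_1\circ r)\circ\Gamma(\phi) = \mathrm{Id}_{M_1}$ and $(\pi_2\circ r)\circ\Gamma(\phi) = \phi$ induce canonical identifications $\Gamma(\phi)^*\wt{E}_1 = E_1$ and $\Gamma(\phi)^*\wt{E}_2 = \phi^*E_2$. Pulling $\Omega_\alpha$ back along $\Gamma(\phi)$ therefore yields the $E_1$-valued $2$-form $\omega_1 - (\Gamma(\phi)^*\alpha)_*\phi^*\omega_2$ on $M_1$, where $\Gamma(\phi)^*\alpha \colon \phi^*E_2 \to E_1$ is a flat oriented isomorphism. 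By Remark \ref{rem:base_only}, such a flat isomorphism amounts to a bundle lift $\Phi$ covering $\phi$ (unique up to positive rescaling), and the vanishing $\Gamma(\phi)^*\Omega_\alpha = 0$ translates directly to $\Phi^*\omega_2 = \omega_1$, i.e.\ to $\phi$ being an LCS map. This simultaneously singles out the unique $\alpha \in \Xi$ making $\Gamma(\phi)$ isotropic.

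For (5) and (6), the lift $\psi_t$ from Theorem \ref{thm:main_smooth}(3) covers $\mathrm{Id}\times\phi_t$, so $\psi_t^*(\pi_1\circ r)^*\omega_1 = (\pi_1\circ r)^*\omega_1$ while $\psi_t^*(\pi_2\circ r)^*\omega_2 = (\pi_2\circ r)^*\phi_t^*\omega_2$. Because $\phi_t$ is an LCS automorphism with expansion coefficient $\mu_t$, its canonical bundle lift rescales the identification of $E$ with itself at rate $\mu_t$ along the flow, so unpacking definitions shows that $\psi_t$ sends the isomorphism underlying $\Omega_{\Gamma(\mathrm{Id})}$ to the one underlying $e^{\int_0^t\mu_\tau d\tau}\cdot\Omega_{\Gamma(\mathrm{Id})}$, giving the stated formula. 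In the Hamiltonian case $\mu_t \equiv 0$, so $\Omega_{\Gamma(\phi_t)} = \Omega_{\Gamma(\mathrm{Id})}$, and a Hamiltonian section for $\phi_t$ pulled back along $\pi_2\circ r$ serves as a Hamiltonian generator of $\psi_t$. The main obstacle I anticipate is the bookkeeping in (2) and (5): making the $\R^+$-torsor structure on $\Xi$, the deck character $\rho$, and the expansion-coefficient flow match under a single consistent sign convention, which requires carefully tracking how pullback interacts with the $\R^+$-rescaling of $\alpha$.
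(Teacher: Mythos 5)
Your proposal is correct and follows essentially the same route as the paper: $\Xi$ is realized as the $\R^+$-torsor of flat oriented isomorphisms $r_2^*E_2 \to r_1^*E_1$, the LCS forms are $r_1^*\omega_1 - \alpha_*(r_2^*\omega_2)$, the deck character comes from how deck transformations rescale $\alpha$, and item (4) reduces to identifying $\Gamma(\phi)^*\alpha$ with the bundle map of an LCS map. The only organizational difference is that the paper isolates the naturality computation behind (5)--(6) as a separate theorem on products of LCS maps (including the explicit Hamiltonian $r_1^*G_t - \Psi(r_2^*H_t)$), whereas you carry out the same computation inline.
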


\begin{rem}
	In our construction, each of the LCS structures $\Omega \in \Xi$ is given as a triple $(r_1^*E_1,r_1^*\nabla_1, \omega_{\Omega})$. However, it is important to note that even though $\Xi$ is an $\R^+$-torsor, we have in general that
	$$\omega_{e^c \cdot \Omega} \neq e^c \cdot \omega_{\Omega}.$$ 
\end{rem}
\color{black}

There is a subclass of LCS manifolds which play particularly nicely with the product. Roughly speaking, an LCS manifold $(M,E,\nabla,\omega)$ is said to be exact if there is some $\lambda \in \Omega^1(M;E)$ with $\omega = d_\nabla \lambda$. Strictly speaking, the data of $\lambda$ should be included as part of the data of an \textbf{exact LCS (ELCS) manifold}; we refer to Definition \ref{dfn:ELCS} for the correct statement.
\color{black}

\begin{thm}\label{thm:exact}
	Let $\scr{M}_1$, $\scr{M}_2$ and $\Xi$ be as in Theorem \ref{thm:main_LCS}. If either $\scr{M}_1$ or $\scr{M}_2$ is ELCS, then any two LCS structures in $\Xi$ are LCS-symplectomorphic. If both $\scr{M}_1$ and $\scr{M}_2$ are ELCS, then $\scr{M}_1\boxplus \scr{M}_2$ is also ELCS for any of the LCS structures in $\Xi$.
\end{thm}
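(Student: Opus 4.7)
The proof separates into the two claims of the theorem.

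For the first claim, I may assume without loss of generality that $\scr{M}_1$ is ELCS with primitive $\lambda_1 \in \Omega^1(M_1;E_1)$. By non-degeneracy of $\omega_1$, define the Liouville-type vector field $X_1$ on $M_1$ via $\iota_{X_1}\omega_1 = \lambda_1$; the twisted Cartan formula then gives $\mathcal{L}_{X_1}\omega_1 = d_{\nabla_1}\lambda_1 = \omega_1$, so $X_1$ is an LCS vector field on $\scr{M}_1$ with constant expansion coefficient $1$. Since $r$ is a local diffeomorphism, the vector field $(X_1, 0)$ on $M_1 \times M_2$ lifts uniquely to a vector field $\wt{X}$ on $\scr{M}_1 \boxplus \scr{M}_2$. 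I expect the argument underlying Theorem \ref{thm:main_LCS}(5) to adapt to this two-factor setting and yield that the flow of $\wt{X}$ sends a given $\Omega \in \Xi$ to $e^t\cdot\Omega$. Since $\Xi$ is an $\R^+$-torsor on which the scalar action is transitive, this gives the desired LCS-symplectomorphism between any two elements. Where $X_1$ fails to be complete I would fall back on a Moser-type argument with a time-dependent LCS vector field (solving $\iota_{X_t}\omega_t = -\lambda_t$ on the product), or patch via the discrete deck-group action from Theorem \ref{thm:main_LCS}(2).

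For the second claim, assume now that both $\scr{M}_1$ and $\scr{M}_2$ are ELCS with primitives $\lambda_1,\lambda_2$. Theorem \ref{thm:main_smooth}(1) gives $(\pi_1\circ r)^*\mathfrak{l}_1 = (\pi_2\circ r)^*\mathfrak{l}_2$, so the pulled-back flat oriented line bundles $r_1^*E_1$ and $r_2^*E_2$ over $\scr{M}_1\boxplus\scr{M}_2$ have matching holonomy and are thus canonically isomorphic up to $\R^+$-scale. A choice of $\Omega \in \Xi$ fixes such an isomorphism and identifies both pullbacks with the ambient line bundle of $\Omega$. I then propose as primitive a linear combination $\lambda_\Omega = a\,r_1^*\lambda_1 + b\,r_2^*\lambda_2$, with coefficients $a,b$ read off from the explicit formula for $\omega_\Omega$ used in the construction of $\Xi$. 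Applying $d_\nabla$, and using its additivity together with its compatibility with pullback along flat bundle morphisms, yields $d_\nabla\lambda_\Omega = a\,r_1^*\omega_1 + b\,r_2^*\omega_2 = \omega_\Omega$, producing the desired ELCS primitive.

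The main obstacle is in the first claim: verifying that $\wt{X}$ really does conformally carry the LCS structure through $\Xi$. This is the direct analog of Theorem \ref{thm:main_LCS}(5), differing only in that $\wt{X}$ originates from a single factor, and it should reduce to the twisted Cartan formula combined with a careful treatment of the line-bundle automorphism covering the flow of $\wt{X}$. The second claim then becomes a bookkeeping exercise once $\omega_\Omega$ has been unpacked from the construction of $\Xi$.
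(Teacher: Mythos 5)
Your proposal is correct and follows essentially the same route as the paper: the first claim is proved there by flowing along the Liouville vector field $V_{\lambda_1}$ (expansion coefficient $1$) and invoking the naturality statement, Theorem \ref{thm:comp_property}, which is precisely the ``two-factor adaptation of Theorem \ref{thm:main_LCS}(5)'' you anticipate and shows that the flow carries $\scr{M}_1\boxplus_{\Psi}\scr{M}_2$ to $\scr{M}_1\boxplus_{e^t\Psi}\scr{M}_2$, hence acts transitively on the $\R^+$-torsor $\Xi$. For the second claim the paper takes exactly your primitive, namely $\lambda_1\boxplus_{\Psi}(-\lambda_2)=r_1^*\lambda_1-\Psi(r_2^*\lambda_2)$, and uses that $\boxplus_{\Psi}$ is a chain map to conclude $d_{r_1^*\nabla_1}\bigl(\lambda_1\boxplus_{\Psi}(-\lambda_2)\bigr)=\omega_1\boxplus_{\Psi}(-\omega_2)$.
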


As a proof of concept of the utility of this product, we generalize an argument of Laudenbach and Sikorav \cite{zbMATH03951633} to the LCS setting. In their work, they prove that if $L$ is a closed manifold and $\phi_t$ is a Hamiltonian isotopy on a cotangent bundle $T^*L$ such that $\phi_1(L)$ is transverse to $L$, then
$$|\phi_1(L) \cap L| \geq \mathrm{rk}(H_*(L)).$$
They then use this to prove that if $\psi_t$ is a $C^0$-small Hamiltonian isotopy of a closed manifold $(M,\omega)$ such that $\psi_1$ has non-degenerate fixed points, then
$$|\mathrm{Fix}(\psi_1)| \geq \mathrm{rk}(H_*(M)).$$
The Hamiltonian fixed point theorem follows from the Lagrangian intersection theorem since the graphs $\Gamma(\psi_t) \subset M \times \overline{M}$ yield a Hamiltonian isotopy of the diagonal $\Delta \subset M \times \overline{M}$, contained in a neighborhood of the diagonal symplectomorphic to $T^*\Delta$.

In the LCS setting there is still a notion of Hamiltonian isotopy, generated by flowing along LCS-Hamiltonian vector fields. Namely, if $H \in \Gamma(E)$ is a section of $E$, then there is a vector field $X_H$ uniquely defined by the equation
$$\iota_{X_H}\omega = -\nabla H.$$
Locally, any flat oriented line bundle is isomorphic to the trivial one, so that $X_H$ may be considered locally Hamiltonian with respect to a symplectic form in the usual sense. See also Definition \ref{dfn:LCS-Hamiltonian}, which includes a description of the full LCS map on the bundle, not just the diffeomorphism on the base, c.f. Remark \ref{rem:base_only}.

\begin{rem}
	Despite this local similarity to symplectic geometry, the global behaviour of LCS Hamiltonian flows on compact manifolds differs significantly from the standard symplectic setting; we refer to Allais--Arnaud \cite{allais2022dynamics} for a list of different behaviours such Hamiltonian flows can have.
\end{rem}

The first author together with E. Murphy \cite{zbMATH07103007} proved a Lagrangian intersection result analogous to the one described in the Laudenbach--Sikorav setting, but on twisted cotangent bundles (see Example \ref{excotan} for a definition). Instead of using usual homology, they phrase their result in terms of the so-called Novikov homology \cite{zbMATH03796853}, which takes as input a manifold $M$ together with a cohomology class in $H^1(M;\R)$. Using the product of Theorem \ref{thm:main_LCS}, we may now use the Laudenbach--Sikorav argument to obtain a Hamiltonian fixed point theorem in the LCS setting.

\begin{thm} \label{thm:fix}
	Let $\scr{M}$ be a closed LCS manifold with Lee class $\mathfrak{l} \in H^1(M;\R)$. If $\phi_t$ is a $C^0$-small LCS-Hamiltonian isotopy such that $\phi_1$ has non-degenerate fixed points, then
	$$\#\operatorname{Fix}(\phi_1)\geq \mathrm{rk}(\mathrm{Nov}(\mathfrak{l})).$$
\end{thm}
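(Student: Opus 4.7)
The plan is to reduce the fixed-point problem on $\scr{M}$ to a Lagrangian intersection problem in a (twisted) cotangent bundle of $M$, and then invoke the Chantraine--Murphy intersection theorem \cite{zbMATH07103007}. The reduction proceeds via the product construction $\scr{M} \boxplus \scr{M}$ and a Weinstein-type neighborhood theorem for the diagonal.

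First, I would identify fixed points of $\phi_1$ with intersections of graphs in $\scr{M}\boxplus\scr{M}$. By Theorem \ref{thm:main_LCS}(6), applied to the LCS-Hamiltonian isotopy $\phi_t$, we get that $\Gamma(\phi_t)$ is an LCS-Hamiltonian isotopy of the diagonal $\Delta = \Gamma(\mathrm{Id})$ in $(\scr{M}\boxplus\scr{M},\Omega_{\Gamma(\mathrm{Id})})$, in which $\Delta$ is a Lagrangian submanifold. By Theorem \ref{thm:main_smooth}(3), transverse intersections $\Delta \cap \Gamma(\phi_1)$ are in bijection with non-degenerate fixed points $x \in \mathrm{Fix}(\phi_1)$ satisfying $\langle \mathfrak{l}, [\phi_t(x)]\rangle = 0$. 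Here the $C^0$-small assumption enters crucially: for $\phi_t$ sufficiently $C^0$-close to the identity, every loop $t\mapsto \phi_t(x)$ bounds a small disk in $M$, so $\langle\mathfrak{l}, [\phi_t(x)]\rangle = 0$ automatically for \emph{every} fixed point $x$ of $\phi_1$. Consequently $\#\mathrm{Fix}(\phi_1) = \#\bigl(\Delta \cap \Gamma(\phi_1)\bigr)$.

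Next, I would prove an LCS version of the Weinstein neighborhood theorem for $\Delta \subset \scr{M}\boxplus\scr{M}$: a tubular neighborhood of $\Delta$ is LCS-isomorphic to a neighborhood of the zero section in the twisted cotangent bundle over $M$ whose Lee class is $\mathfrak{l}$ (see Example \ref{excotan}). Because the Lee class of $\Omega_{\Gamma(\mathrm{Id})}$ restricts to $\mathfrak{l}$ on $\Delta \cong M$ by Theorem \ref{thm:main_LCS}(3), this is the correct target. The argument should be a Moser-type deformation, built locally from the standard symplectic Weinstein theorem applied to the local symplectic charts trivializing the flat line bundle, and glued using the fact that two LCS structures with the same Lee class and agreeing along a Lagrangian are isomorphic near that Lagrangian. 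Since $\phi_t$ is $C^0$-small, its graph $\Gamma(\phi_t)$ lies inside this tubular neighborhood, hence identifies with a Hamiltonian isotopy of the zero section inside the twisted cotangent bundle.

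Finally, I would apply the Chantraine--Murphy Lagrangian intersection theorem to conclude
\[
\#\bigl(\Delta \cap \Gamma(\phi_1)\bigr) \geq \mathrm{rk}(\mathrm{Nov}(\mathfrak{l})),
\]
which combined with the previous equality yields the desired bound. The main obstacle is the LCS Weinstein neighborhood theorem, specifically verifying that the twisting of the normal cotangent bundle indeed matches the Lee class $\mathfrak{l}$ pulled back to the diagonal; this is the content that makes the Chantraine--Murphy theorem applicable with the correct Novikov coefficient system. The other steps (bijection of fixed points with intersection points, and the vanishing of the Lee pairing in the $C^0$-small regime) are direct consequences of Theorems \ref{thm:main_smooth} and \ref{thm:main_LCS} together with an elementary homotopy argument.
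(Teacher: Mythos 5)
Your proposal is correct and follows essentially the same route as the paper: identify fixed points with intersections of $\Gamma(\mathrm{Id})$ and $\Gamma(\phi_1)$ in $\scr{M}\boxplus\scr{M}$, use the LCS Weinstein neighborhood theorem (which the paper simply cites from Otiman--Stanciu rather than reproving) to place everything in a twisted cotangent bundle with Lee class $\mathfrak{l}$, and apply the Chantraine--Murphy intersection bound. Your additional observation that $C^0$-smallness forces $\langle\mathfrak{l},[\phi_t(x)]\rangle=0$ for every fixed point is correct but not needed, since the inequality $\#\mathrm{Fix}(\phi_1)\geq\#\bigl(\Gamma(\mathrm{Id})\cap\Gamma(\phi_1)\bigr)$ already suffices.
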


\begin{proof}
	By Theorem \ref{thm:main_LCS}, we have that $\Gamma(\mathrm{Id}) \colon M \rightarrow \scr{M} \boxplus \scr{M}$ is a Lagrangian embedding. The Lee class pulls back to $M$ as
	$$\Gamma(\mathrm{Id})^*(\pi_1 \circ r)^*\mathfrak{l} = (\pi_1 \circ r \circ \Gamma(\mathrm{Id}))^*\mathfrak{l} = (\pi_1 \circ \Gamma_0(\mathrm{Id}))^* = \mathrm{Id}^*\mathfrak{l} = \mathfrak{l}.$$
	By the Weinstein Neighborhood Theorem in the LCS setting (see \cite[Theorem 1.1]{OTIMAN20171} or \cite[Theorem 4.2]{zbMATH06631798}), a neighborhood of a Lagrangian is determined up to LCS symplectomorphism by the restriction of the Lee class to the Lagrangian. In particular, a neighborhood of $\Gamma(\mathrm{Id})$ is LCS-symplectomorphic to a neighborhood of the zero section in a twisted cotangent bundle.
	
	Now let $\psi_t \in \mathrm{Diff}(\scr{M} \boxplus \scr{M})$ denote the lift of $\mathrm{Id} \times \phi_t \in \mathrm{Diff}(M \times M)$ as in Theorem \ref{thm:main_smooth}. We note that since $r$ is a covering space, since $\phi_t$ is $C^0$-small, we also have that $\psi_t$ is $C^0$-small. Furthermore, by Theorem \ref{thm:main_LCS}, we have that $\psi_t$ is Hamiltonian, and so $\Gamma(\phi_t) = \psi_t \circ \Gamma(\mathrm{Id})$ is a Hamiltonian deformation of $\Gamma(\mathrm{Id})$ in $U$. Since the fixed points of $\phi_1$ are non-degenerate, we have that $\Gamma(\mathrm{Id}) \cap \Gamma(\phi_1)$ is transverse. Given the identification of $U$ with a neighborhood of the zero section in a twisted cotangent bundle, applying the Lagrangian fixed point result of the first author and Murphy \cite{zbMATH07103007}, we have
	$$\# \mathrm{Fix}(\phi_1) \geq \# (\Gamma(\mathrm{Id}) \cap \Gamma(\phi_1)) \geq \mathrm{rk}(\mathrm{Nov}(\mathfrak{l})).$$
\end{proof}

\begin{rem}
	Under the same hypotheses as Theorem \ref{thm:fix}, we may use a slightly stronger version of the Lagrangian intersection result \cite{zbMATH07103007} to prove that
	$$\#\operatorname{Fix}_0(\phi_1;\phi_t)\geq\mathrm{StabMorse}_{\mathfrak{l}}(M),$$
	where the left-hand side is the number of fixed points $x$ with $\langle \mathfrak{l}, [\phi_t(x)] \rangle = 0$, and the right-hand side is a (twisted) stable Morse number, which is at least the rank of $\mathrm{Nov}(\mathfrak{l})$.
\end{rem}

\textbf{For the rest of the paper, we will assume that all of our manifolds are connected.} Indeed, it is easy to see that the general case of Theorem \ref{thm:main_smooth} follows from the connected case. Generalizing Theorem \ref{thm:main_LCS} to the disconnected case is not much more difficult -- the only change is that $\Xi$ is an $(\R^+)^S$-torsor, where $S = \pi_0(M_1) \times \pi_0(M_2)$. Similarly, the disconnected case of Theorem \ref{thm:fix} follows easily from the connected case, and hence we may assume $\scr{M}$ is connected in the proof. The exposition is made particularly clean by considering the connected case alone, so we follow this convention.

\begin{rem} 
	Most of our geometric constructions hold for arbitrary flat line bundles, not just oriented ones. Both LCS geometry and Hamiltonian dynamics in this more general setting have some interest, and adapting our definitions to these cases is mainly obvious (when possible), but the reader should be advised that Definition \ref{dfn:LCS_gauge} is no longer available, i.e. one cannot simply fix a gauge representative of the connection and work with usual differential forms. More importantly, it is not clear how the Lagrangian intersection result of the first author and E. Murphy \cite{zbMATH07103007} adapts in this situation, and hence how Theorem \ref{thm:fix} should be modified to account for this situation, though surely it would involve some $\mathbb{Z}_2$-equivariance in Novikov theory.
\end{rem}

\subsection{Acknowledgements}
This collaboration started during the 2021 meeting ``Conformal symplectic structures, contact topology, and foliations'' of the American Institute of Mathematics (AIM). Both the authors wish to thank AIM together with the organisers of the meeting for creating this possiblity. The authors thanks M\'elanie Bertelson, Yasha Eliashberg and Emmy Murphy for preliminary discussions on the subject. The first author is partially supported by the ANR projects COSY (ANR-21-CE40-0002) and COSYDY (ANR-CE40-0014). Initial work on this project began while the second author was supported by the National Science Foundation under grant DMS-1547145.

\section{Flat line bundles} \label{sec:flat-line-bundles}

In this section, we recall details about flat line bundles and their associated cochain complexes of differential forms. Everything in this section is standard material; see for example \cite[Chapter 12]{zbMATH05652257} and \cite[Section 7]{zbMATH03782042}.

\subsection{Flat line bundles and the twisted de Rham complex}
Suppose $\pi \colon E \rightarrow M$ is a line bundle. We denote by $\Omega^{\bullet}(M;E) := \Gamma(\Lambda^{\bullet}T^*M \otimes E)$ the smooth $E$-valued differential forms. A connection is a linear map
$$\nabla \colon \Omega^{0}(M;E) \rightarrow \Omega^{1}(M;E)$$
such that for any $s \in \Gamma(E)$ and $f \in C^{\infty}(M)$, we have
$$\nabla (fs) = df \otimes s + f\nabla s.$$
This extends to a linear map
$$d_\nabla \colon \Omega^{\bullet}(M;E) \rightarrow \Omega^{\bullet+1}(M;E)$$
in a unique manner so that it satisfies the Leibniz rule: for any differential form $\nu \in \Omega^{\bullet}(M)$ and any section $s \in \Gamma(E)$, we have
$$d_\nabla(\nu \otimes s) = d\nu \otimes s + \nu \wedge d_\nabla s.$$
The connection is called flat if $d_\nabla^2 = 0$, in which case we obtain a cochain complex $(\Omega^{\bullet}(M;E),d_\nabla)$, which we shall call the \textbf{twisted de Rham complex}. The trivial flat line bundle is $(\underline{\R},d)$, for which the twisted de Rham complex is just the usual de Rham complex.

\color{black}

If $(E_1,\nabla_1)$ and $(E_2,\nabla_2)$ are two flat line bundles over the same manifold $M$, then $E_1 \otimes E_2$ comes with a canonical flat connection so that the wedge product induces a map of cochain complexes
$$\wedge \colon \Omega^{\bullet}(M;E_1) \otimes \Omega^{\bullet}(M;E_2) \rightarrow \Omega^{\bullet}(M;E_1 \otimes E_2).$$
Taking either $(E_1,\nabla_1)$ or $(E_2,\nabla_2)$ as the trivial flat line bundle, we obtain that $(\Omega^{\bullet}(M;E),d_\nabla)$ is a differential graded bimodule over the usual de Rham differential graded algebra.

Suppose $\pi_1 \colon E_1 \rightarrow M_1$ and $\pi_2 \colon E_2 \rightarrow M_2$ are line bundles, and suppose we have a map of line bundles $(\phi,\Phi)$, by which we mean maps $\phi$ and $\Phi$ such that the following diagram commutes with $\Phi$ fibrewise linear:
$$\xymatrix{E_1 \ar[r]^{\Phi} \ar[d]_{\pi_1} & E_2 \ar[d]^{\pi_2} \\ M_1 \ar[r]^{\phi} & M_2}.$$
We will often write $\Phi \colon E_1 \rightarrow E_2$ lying over $\phi$. The data of $\Phi$ is equivalently given by a morphism $\Phi \colon E_1 \rightarrow \phi^*E_2$ (lying over the identity on $M_1$), which we also denote by $\Phi$ by abuse of notation. In the event that $\Phi$ is a fibrewise isomorphism (i.e. inducing an isomorphism of line bundles $E_1 \cong \phi^*E_2$ over $M_1$), then there is an induced map for any $q \in M_1$ of the form:
$$\Phi_q^* \colon \Lambda^{\bullet}T^*_{\phi(q)}M_2 \otimes (E_2)_{\phi(q)} \rightarrow \Lambda^{\bullet}T^*_{q}M_1 \otimes (E_1)_q$$
given by $\phi^* \otimes \Phi_q^{-1}$, where $\Phi_q \colon (E_1)_q \rightarrow (E_2)_{\phi(q)}$ is the corresponding isomorphism of fibers. These glue to give a pull-back map
$$\Phi^* \colon \Omega^{\bullet}(M_2;E_2) \rightarrow \Omega^{\bullet}(M_1;E_1).$$
Any connection $\nabla$ on $E_2$ then induces a connection $\Phi^*\nabla$ on $E_1$ by requiring that
$$(\Phi^*\nabla)(\Phi^*s) = \Phi^*(\nabla s)$$
for all $s \in \Gamma(E_2)$. If $\nabla$ is flat, then so is $\Phi^*\nabla$. If now our line bundles come with connections $\nabla_1$ and $\nabla_2$, then we shall regard the pair $(\phi,\Phi)$ as a \textbf{morphism of flat line bundles} if $\Phi$ is a fiberwise isomorphism and $\nabla_1 = \Phi^*\nabla_2$. In such a case, the map $\Phi^*$ on the twisted de Rham complexes is actually a map of cochain complexes
$$\Phi^* \colon (\Omega^{\bullet}(M_2;E_2),d_{\nabla_2}) \rightarrow (\Omega^{\bullet}(M_1;E_1),d_{\nabla_1}).$$

As a special case, suppose we fix any smooth map $\phi \colon N \rightarrow M$ and line bundle $\pi \colon E \rightarrow M$. Then we have a canonical line bundle over $N$ given by pullback, $\pi' \colon \phi^*E \rightarrow N$. Notice that there is a canonical bundle map over $\phi$ in this case, $\Phi_{\mathrm{can}} \colon \phi^*E \rightarrow E$ lying over $\phi$, which is automatically a fibrewise isomorphism. We hence have $(\phi,\Phi_{\mathrm{can}})$ is a morphism of flat line bundles $(\phi^*E,\Phi_{\mathrm{can}}^*\nabla) \rightarrow (E,\nabla)$ lying over $\phi$. By abuse of notation, since in this case $\Phi_{\mathrm{can}}$ is determined by $\phi$, we will just write
$$\phi^*\nabla := \Phi_{\mathrm{can}}^*\nabla$$
and
$$\phi^* := \Phi_{\mathrm{can}}^* \colon (\Omega^{\bullet}(M;E),d_\nabla) \rightarrow (\Omega^{\bullet}(N;\phi^*E),d_{\phi^*\nabla}).$$

Suppose $p,q \in M$, and suppose $\gamma$ is a path from $p$ to $q$ in $M$. For any element $v \in E_p$, there exists a unique section $s \in \Gamma(\gamma^*E)$ such that $s(0) = v$ and $(\gamma^*\nabla)s = 0$. By assigning to $v$ the value of $s(1) \in E_q$, we obtain a linear isomorphism
$$F_{\gamma} \colon E_p \xrightarrow{\sim} E_q,$$
called the parallel transport map. Assuming $\nabla$ is flat, the parallel transport depends only upon the homotopy class of $\gamma$ relative to endpoints. If $p = q$, then $F_{\gamma} \in \mathrm{Aut}(E_p) \cong \R^*$, so we obtain a homomorphism
$$\pi_1(M,p) \rightarrow \R^*.$$
This in turn may be thought of as a cohomology class $\mathrm{hol}(\nabla) \in H^1(M;\R^*)$, which is called the holonomy class. It is natural under morphisms of flat bundles; such a morphism $(\phi,\Phi)$ has $\phi^*\mathrm{hol}(\nabla_2) = \mathrm{hol}(\nabla_1)$. For oriented line bundles, $\mathrm{hol}(\nabla) \in H^1(M;\R^+)$. The holonomy class essentially specifies all of the data of the oriented flat line bundle up to isomorphism, in the sense of the following proposition.

\begin{prop} \label{prop:flat-line-bundle-facts}
	Suppose $\rho \in H^1(M;\R)$ is a fixed cohomology class. Then there exists a flat oriented line bundle $\pi \colon (E,\nabla) \rightarrow M$ such that
	$$e^{\rho} = \mathrm{hol}({\nabla}) \in H^1(M;\R^+).$$
	Any two flat oriented line bundles over $M$ with the same holonomy class are isomorphic, and the set of isomorphisms covering the identity on $M$ is naturally an $\R^+$-torsor, where the $\R^+$-action is given by multiplying the isomorphism by a positive constant.
\end{prop}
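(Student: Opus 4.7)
The plan is to prove the three claims in sequence: existence of a flat oriented line bundle with prescribed holonomy, uniqueness up to isomorphism, and the $\R^+$-torsor structure on the set of isomorphisms.

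For existence, I first pick a closed one-form $\alpha \in \Omega^1(M)$ representing the class $\rho$ under the de Rham isomorphism. On the trivial line bundle $\underline{\R} \to M$, I define the connection $\nabla s := ds - \alpha \cdot s$ for $s \in C^\infty(M) = \Gamma(\underline{\R})$, with its canonical orientation. Flatness $d_\nabla^2 = 0$ is immediate from $d\alpha = 0$. To compute the holonomy along a loop $\gamma$ based at $p$, I solve the parallel transport equation $\gamma^*\nabla s = 0$, namely $ds = s \cdot \gamma^*\alpha$; the solution is $s(t) = s(0)\exp\!\bigl(\int_0^t \gamma^*\alpha\bigr)$, so the holonomy around $\gamma$ is $\exp(\oint_\gamma \alpha)$. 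Thus $\mathrm{hol}(\nabla) = e^\rho$ as required.

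For uniqueness, I use that an oriented real line bundle has structure group $\R^+$, which is contractible, hence admits a global positive trivialization. Choose positive trivializations of $E_1$ and $E_2$; under these, the flat connections take the form $\nabla_i = d - \alpha_i$ for some closed one-forms $\alpha_1, \alpha_2 \in \Omega^1(M)$. By the holonomy computation above, $[\alpha_1] = [\alpha_2] = \rho$, so $\alpha_1 - \alpha_2 = df$ for some $f \in C^\infty(M)$. I then define $\Phi \colon E_1 \to E_2$ by fiberwise multiplication by $e^f$ in the chosen trivializations; a direct computation of the form $\nabla_2(\Phi s) = \Phi(\nabla_1 s)$, which reduces to $d(e^f s) - \alpha_2 e^f s = e^f(ds - \alpha_1 s)$, verifies that $\Phi^*\nabla_2 = \nabla_1$, so $\Phi$ is the desired isomorphism of flat oriented line bundles.

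For the torsor statement, given two isomorphisms $\Phi, \Phi' \colon (E_1,\nabla_1) \to (E_2,\nabla_2)$ covering the identity, the composition $\Phi^{-1} \circ \Phi'$ is a flat bundle automorphism of $(E_1,\nabla_1)$ covering the identity and preserving orientation, hence fiberwise multiplication by some smooth function $h \colon M \to \R^+$. The condition that $h$ intertwines the connection with itself translates to $d\log h = 0$; since $M$ is connected (per the blanket assumption set earlier in the introduction), $h$ is a positive constant. Conversely, scaling any valid $\Phi$ by a positive constant still intertwines $\nabla_1$ and $\nabla_2$, so $\R^+$ acts freely and transitively on the set of isomorphisms.

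The proof is essentially standard bookkeeping; the only nontrivial inputs are the de Rham representability of $\rho$ and the contractibility of $\R^+$ giving trivializability of oriented line bundles. The main thing to watch is the sign convention in the holonomy computation (which fixes the sign in the definition $\nabla = d - \alpha$) and the verification that pullback by $e^f$ correctly converts $\nabla_1$ into $\nabla_2$; neither constitutes a genuine obstacle.
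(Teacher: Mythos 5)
Your proof is correct and is the standard argument; the paper offers no proof of this proposition (it is cited as standard material), and your route through a global positive trivialization and the gauge form $\nabla = d - \alpha$ is precisely the correspondence the paper itself sets up informally in its discussion of the gauged viewpoint, so there is nothing methodologically different to compare. One small sign slip in the uniqueness step: having defined $f$ by $\alpha_1 - \alpha_2 = df$, the displayed identity $d(e^f s) - \alpha_2 e^f s = e^f(ds - \alpha_1 s)$ actually requires $df = \alpha_2 - \alpha_1$, so the intertwining map should be multiplication by $e^{-f}$ (equivalently, define $f$ by $\alpha_2 - \alpha_1 = df$); this is exactly the bookkeeping you flagged and does not affect the validity of the argument.
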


\begin{rem}
	We recall our convention that $M$ is connected. Otherwise, the set of isomorphisms in the above proposition would be an $(\R^+)^{\pi_0(M)}$-torsor, corresponding to multiplying the isomorphism by a positive locally constant function instead of just a positive constant.
\end{rem}

\begin{rem}
This proposition tells us that the twisted de Rham complex is in some sense an invariant of $\rho \in H^1(M;\R)$, up to scaling the complex by positive constants. Indeed, the proposition tells us that for any $\rho$, we may build a flat line bundle to obtain the twisted de Rham complex, and any two such complexes are isomorphic via an isomorphism which is canonical up to scale by a global positive constant.
\end{rem}

If $\phi \colon M \rightarrow N$ is a smooth map, and if $\mathcal{E} = (E,\nabla_E)$ and $\mathcal{F} = (F,\nabla_F)$ are flat oriented line bundles over $M$ and $N$, respectively, then we may denote by
$$\mathrm{Iso}_{\phi}(\mathcal{E},\mathcal{F})$$
the set of flat oriented line bundle isomorphisms lying over $\phi$. Notice that assuming $M$ is connected, then this is either an $\R^+$-torsor, if $\phi^*\mathrm{hol}(\nabla_F) = \mathrm{hol}(\nabla_E)$, or empty otherwise. We will typically write simply $\mathrm{Iso}(\scr{E},\scr{F})$ if $\phi = \mathrm{id}$.

\begin{prop}\label{prop:restrict_torsor}
	Suppose we have a commutative diagram of smooth maps between connected manifolds (possibly with boundary)
	$$\xymatrix{N \ar[r]^{\wt{\phi}} \ar[d]_{f} & N' \ar[d]^{g} \\ M \ar[r]^{\phi} & M'}$$
	If $\mathcal{E}$ and $\mathcal{F}$ are flat oriented line bundles over $M$ and $M'$, respectively, then there is an isomorphism
	$$(f,g)^* \colon \mathrm{Iso}_{\phi}(\mathcal{E},\mathcal{F}) \rightarrow \mathrm{Iso}_{\wt{\phi}}(f^*\mathcal{E},g^*\mathcal{F})$$
	uniquely defined by the requirement that the following diagram commutes for each $\Phi \in \mathrm{Iso}_{\phi}(\mathcal{E},\mathcal{F})$:
	$$\xymatrix{f^*\mathcal{E} \ar[r]^{(f,g)^*\Phi} \ar[d]_{F_{\mathrm{can}}} & g^*\mathcal{F} \ar[d]^{G_{\mathrm{can}}} \\ \mathcal{E} \ar[r]^{\Phi} & \mathcal{F}}$$
	Furthermore, if $\mathrm{Iso}_{\phi}(\mathcal{E},\mathcal{F})$ an $\R^+$-torsor, then so is $\mathrm{Iso}_{\wt{\phi}}(f^*\mathcal{E},g^*\mathcal{F})$, in which case $(f,g)^*$ is an isomorphism of $\R^+$-torsors.
\end{prop}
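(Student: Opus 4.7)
The plan is to define the map $(f,g)^*$ pointwise using the forced formula from the commutativity requirement, verify that the image genuinely lies in the target, and then deduce bijectivity in the torsor case from $\R^+$-equivariance. Concretely, the canonical maps $F_{\mathrm{can}} \colon f^*\mathcal{E} \to \mathcal{E}$ and $G_{\mathrm{can}} \colon g^*\mathcal{F} \to \mathcal{F}$ are fibrewise isomorphisms lying over $f$ and $g$, so for any $\Phi \in \mathrm{Iso}_{\phi}(\mathcal{E},\mathcal{F})$ and any $q \in N$, commutativity of the square uniquely forces
$$((f,g)^*\Phi)_q := (G_{\mathrm{can}})_q^{-1} \circ \Phi_{f(q)} \circ (F_{\mathrm{can}})_q.$$
This formula is smooth in $q$, is fibrewise a linear isomorphism, and preserves orientations (as a composition of three orientation-preserving fibrewise maps), so it defines a bundle isomorphism $f^*\mathcal{E} \to g^*\mathcal{F}$ lying over $\wt{\phi}$.

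The main content is then verifying that $(f,g)^*\Phi$ is actually a morphism of flat line bundles, i.e. that it pulls $\nabla_{g^*\mathcal{F}}$ back to $\nabla_{f^*\mathcal{E}}$. Recall from earlier in the section that by construction $\nabla_{f^*\mathcal{E}} = F_{\mathrm{can}}^*\nabla_{\mathcal{E}}$ and $\nabla_{g^*\mathcal{F}} = G_{\mathrm{can}}^*\nabla_{\mathcal{F}}$, while by hypothesis $\Phi^*\nabla_{\mathcal{F}} = \nabla_{\mathcal{E}}$. A short diagram chase then shows that traversing the square in either order pulls $\nabla_{\mathcal{F}}$ back to the same connection on $f^*\mathcal{E}$, yielding the desired identity $((f,g)^*\Phi)^*\nabla_{g^*\mathcal{F}} = \nabla_{f^*\mathcal{E}}$.

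Finally, for the torsor statement, the defining formula is visibly $\R^+$-linear in $\Phi$, so $(f,g)^*(c \cdot \Phi) = c \cdot (f,g)^*\Phi$ for every $c > 0$. If $\mathrm{Iso}_{\phi}(\mathcal{E},\mathcal{F})$ is a non-empty $\R^+$-torsor, then Proposition \ref{prop:flat-line-bundle-facts} gives $\phi^*\mathrm{hol}(\nabla_{\mathcal{F}}) = \mathrm{hol}(\nabla_{\mathcal{E}})$; applying $f^*$ and using the commutativity of the outer square transfers this to the analogous equality on $N$, so Proposition \ref{prop:flat-line-bundle-facts} again certifies that $\mathrm{Iso}_{\wt{\phi}}(f^*\mathcal{E},g^*\mathcal{F})$ is a non-empty $\R^+$-torsor. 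Any $\R^+$-equivariant map between non-empty $\R^+$-torsors is automatically a bijection, making $(f,g)^*$ an isomorphism of torsors. The only mildly delicate step I expect to encounter is the bookkeeping around pullbacks of connections in the second paragraph, but this is a purely formal verification once the definitions recalled earlier are unwound; there is no analytic or topological obstruction to overcome.
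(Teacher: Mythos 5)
Your proposal is correct and follows essentially the same route as the paper: the same forced pointwise formula $(G_{\mathrm{can}})^{-1}\circ\Phi\circ F_{\mathrm{can}}$, the same formal verification that pullback connections match (the paper phrases it as $\wt{\phi}^*g^*\nabla_F = f^*\nabla_E$ using commutativity of the square), and the same closing argument that an $\R^+$-equivariant map between non-empty $\R^+$-torsors is automatically a bijection. No gaps.
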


\begin{proof}
	For $x \in N$ and $\Phi \in \mathrm{Iso}_{\phi}(\mathcal{E},\mathcal{F})$, we simply define $(f,g)^*\Phi$ at $x$ by the composition
	$$(f^*\mathcal{E})_x \cong \mathcal{E}_{f(x)} \xrightarrow{\Phi_{f(x)}} \mathcal{F}_{\phi(f(x))} = \mathcal{F}_{(g(\wt{\phi}(x)))} \cong (g^*\mathcal{F})_{\wt{\phi}(x)}.$$
	The fact that $G_{\mathrm{can}} \circ (f,g)^*\Phi = \Phi \circ F_{\mathrm{can}}$ is obvious from construction. We note that $\mathrm{Iso}_{\phi}(\mathcal{E},\mathcal{F})$ is an $\R^+$-torsor if and only if $\phi^*\mathrm{hol}(\nabla_F) = \mathrm{hol}(\nabla_E)$. But this implies that
	$$\wt{\phi}^*\nabla_{g^*F} = \wt{\phi}^*g^*\nabla_F = f^*\nabla_E = \nabla_{f^*E},$$
	and so also $\mathrm{Iso}_{\wt{\phi}}(f^*\mathcal{E},g^*\mathcal{F})$ is an $\R^+$-torsor in this case. Finally, $(f,g)^*$ intertwines the $\R^+$ actions, and any map of $\R^+$-torsors intertwining the $\R^+$-action is an isomorphism.
\end{proof}

\begin{rem}
	\begin{itemize}
		\item In the case that $N = N'$, $M=M'$, and $f=g$, we simply write $f^*\Phi$ instead of $(f,g)^*\Phi$.
		\item In the case that $M = N = N'$, $f = \wt{\phi} = \mathrm{id}$, and $g = \phi$, we see that $\mathrm{Iso}_{\phi}(\scr{E},\scr{F}) \cong \mathrm{Iso}(\scr{E},\phi^*\scr{F})$.
	\end{itemize}
\end{rem}

The parallel transport idea is also relevant to the situation in which we have a family $\phi_t \colon M \rightarrow M$ which is the flow of a (possibly non-autonomous) vector field $X_t$. Namely, for each $p \in M$, we have a curve $\gamma(t) := \phi_t(p)$, and so parallel transport gives a linear map $F_{\gamma} \colon E_p \rightarrow E_{\phi_t(p)}$ for each $t$. We may view this as a bundle isomorphism $F_{\phi_t}$ of $E$ lying over $\phi_t$. In fact, we have that $F_{\phi_t}^*\nabla = \nabla$, so $F_{\phi_t} \in \mathrm{Iso}_{\phi_t}((E,\nabla),(E,\nabla))$.

\color{black}

\begin{dfn}
	Suppose $\scr{E} = (E,\nabla)$ is a flat line bundle over $M$. For a form $\omega \in \Omega^{\bullet}(M;E)$, its \textbf{twisted Lie derivative} is given by
 $$\scr{L}_X^{\scr{E}}\omega := \left.\frac{d}{dt}\right\vert_{t=0} F_{\phi_t}^*\omega \in \Omega^{\bullet}(M;E).$$
\end{dfn}

Cartan's magic formula extends to this setting, allowing for the following convenient computation:

\begin{thm}\label{thm:cartan}
	For $\omega\in \Omega(M;E)$ and $X\in \Gamma(TM)$ we have $$\mathcal{L}^{\scr{E}}_X\omega=d_\nabla(\iota_X\omega)+\iota_X d_\nabla\omega.$$.
\end{thm}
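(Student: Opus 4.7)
The plan is to exploit the fact that both sides of the claimed identity are entirely local, and then reduce to the classical Cartan formula via a flat trivialization. Fix a point $p \in M$. Since $(E,\nabla)$ is a flat oriented line bundle, by Proposition \ref{prop:flat-line-bundle-facts} (applied to a contractible neighborhood) we may choose an isomorphism of flat oriented line bundles $\Psi \colon (E|_U,\nabla) \xrightarrow{\sim} (\underline{\R}|_U,d)$ on some neighborhood $U$ of $p$. This induces an isomorphism of cochain complexes $\Psi_* \colon (\Omega^{\bullet}(U;E), d_\nabla) \xrightarrow{\sim} (\Omega^{\bullet}(U),d)$, and it is compatible with interior products, since $\iota_X$ only touches the form factor and not the $E$-factor.

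The key observation is that $\Psi_*$ also intertwines the twisted Lie derivative with the classical one. Indeed, a section $s \in \Gamma(\underline{\R}|_U)$ is parallel for the trivial connection $d$ if and only if it is locally constant, so under $\Psi$ parallel transport along any curve $\gamma$ in $U$ corresponds to the identity map on $\R$. Consequently, under $\Psi$ the bundle isomorphism $F_{\phi_t}$ (restricted to a small enough neighborhood and small enough time) is identified with the canonical identification of fibers in $\underline{\R}|_U$, so that $\Psi_* (F_{\phi_t}^*\omega) = \phi_t^*(\Psi_*\omega)$. Differentiating at $t=0$ gives $\Psi_*(\mathcal{L}^{\scr{E}}_X \omega) = \mathcal{L}_X(\Psi_*\omega)$.

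With these identifications in place, the identity to be proved becomes, after applying $\Psi_*$,
$$\mathcal{L}_X (\Psi_*\omega) = d(\iota_X \Psi_*\omega) + \iota_X d(\Psi_*\omega),$$
which is the classical Cartan magic formula for ordinary differential forms on $U$. Since $p$ was arbitrary and the identity is pointwise, this proves the theorem on all of $M$.

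The only genuinely substantive point, which I would flag as the one step worth checking carefully, is the assertion that parallel transport in $(E,\nabla)$ becomes the identity under the flat trivialization $\Psi$ --- equivalently, that $\Psi$ is compatible with the parallel-transport operators $F_{\phi_t}$ used to define $\mathcal{L}^{\scr{E}}_X$. This is immediate from the definition of a morphism of flat line bundles (which carries parallel sections to parallel sections), but it is the place where flatness of $\nabla$ is really used; without it, one would not have $\Psi$ to begin with and the Cartan formula would need to be corrected by a curvature term. As an alternative that avoids trivializations, one could verify that both sides are $\R$-linear derivations of $\Omega^{\bullet}(M;E)$ over the de Rham algebra $\Omega^{\bullet}(M)$ (using the standard graded-Leibniz rules for $d_\nabla$ and $\iota_X$, together with the classical Cartan formula applied to the $\Omega^{\bullet}(M)$-factor), and agree on $\Omega^0(M;E) = \Gamma(E)$, where both return $\nabla_X s$; this reduces the claim to a local generation statement for $\Omega^{\bullet}(M;E)$ over $\Omega^{\bullet}(M)$, which again holds by local triviality.
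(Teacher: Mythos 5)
Your proof is correct and follows essentially the same route as the paper's: localize at a point, use Proposition \ref{prop:flat-line-bundle-facts} to trivialize the flat bundle on a small neighborhood, observe that the twisted Lie derivative becomes the classical one under this trivialization (since parallel transport becomes the identity), and invoke the ordinary Cartan formula. You are in fact slightly more explicit than the paper about the one substantive point, namely that the flat trivialization intertwines $F_{\phi_t}$ with the canonical identification of fibers.
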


\begin{proof}
	Let us suppose we wish to compute $\scr{L}_X^{\scr{E}}\omega$ at a point $p \in M$. 
	Suppose $U$ is an open neighborhood around $p$. Then for some $\epsilon > 0$, we see that $\phi_t(p) \in U$ for all $|t| < \epsilon$. Furthermore, for this $\epsilon$, there is some open neighborhood $V$ around $p$ such that $\phi_t(V) \subseteq U$ for all $|t|<\epsilon$. (Notice, $V \subseteq U$ by considering $t=0$.) We see that for $|t|<\epsilon$, the value of $F_{\phi_t}^*\omega$ at $p$ only depends upon the family $\phi_t \colon V \rightarrow U$. In this sense the definition of $\scr{L}_X^\scr{E}\omega$ at $p$ is local: it only depends upon $\omega|_U$ and the family $\phi_t \colon V \rightarrow U$ for $|t| < \epsilon$. By choosing $U$ small enough so that the restriction of the holonomy class is trivial, Proposition \ref{prop:flat-line-bundle-facts} implies that $(E|_U,\nabla|_U)$ may be identified with the trivial flat bundle $(\underline{\R},d)$, and we may perform all of our computations with respect to this local trivialisation. We see that our definition for the twisted Lie derivative map is just the usual definition for the Lie derivative in this trivialisation, and that the formula we are trying to prove is just the usual Cartan formula, so the result holds directly from the untwisted setting.
\end{proof}

\subsection{The gauged viewpoint on flat oriented line bundles} \label{sec:gauge}

Suppose $\eta \in \Omega^1(M)$ is a closed differential form, and write $\rho := [\eta] \in H^1(M;\R)$ for its cohomology class. We shall refer to $\eta$ as a choice of gauge for $\rho$. Such a gauge naturally yields the \textbf{Lichnerowicz--de Rham complex}, given by
$$(\Omega^{\bullet}(M),d+\eta \wedge).$$
We note that if we choose a different representative $\eta+df$ for the same cohomology class, then multiplication by $e^{-f}$ naturally yields an isomorphism of cochain complexes
$$(\Omega^{\bullet}(M),d+\eta \wedge) \xrightarrow{\sim} (\Omega^{\bullet}(M),d+(\eta +df)\wedge).$$
If $\eta$ and $\eta'$ are cohomologous, then the collection of functions $f$ such that $\eta'-\eta = df$ is naturally an $\R$-torsor, and hence the complex of twisted differential forms is essentially an invariant of $(M,\rho)$, up to automorphisms given by multiplication by a positive constant. This is actually the same structure we saw for the twisted de Rham complex for flat oriented line bundles. This is more than a coincidence: these two theories describe the same invariant of $\rho$.

Indeed, suppose we start with a flat orientable line bundle $\pi \colon (E,\nabla) \rightarrow M$, and let $s$ be a nowhere zero section inducing a trivialization $E \cong \underline{\R}$, and hence $\Omega^{\bullet}(M;E) \cong \Omega^{\bullet}(M)$ as graded vector spaces. Then $d_\nabla$ is naturally identified with $d+\eta_s \wedge$, where $\eta_s\in \Omega^1(M)$ is a closed differential form with $e^{[\eta_s]} =  \mathrm{hol}(\nabla)$. Furthermore, $\eta_{e^fs} = \eta_s + df$. In this manner, we see that the twisted de Rham complex for the flat line bundle recovers the Lichnerowicz--de Rham complex. Conversely, starting from a closed $1$-form $\eta$, we may construct a flat oriented line bundle whose holonomy is $e^{[\eta]}$ by Proposition \ref{prop:flat-line-bundle-facts}.

For example, we recall that in the introduction, we defined LCS manifolds and LCS maps in terms of flat line bundles. We obtain an equivalent category for LCS geometry by using the following definitions:

\begin{dfn} \label{dfn:LCS_gauge}
	An \textbf{LCS manifold} is a triple $(M,\eta,\omega)$ such that:
	\begin{itemize}
		\item $\eta \in \Omega^1(M)$ is closed, i.e. $d\eta = 0$
		\item $\omega \in \Omega^2(M)$ is:
		\begin{itemize}
			\item \textbf{twisted-closed}, meaning $d\omega+ \eta \wedge \omega = 0$
			\item \textbf{non-degenerate}, meaning $\iota_{\bullet}\omega \colon TM \rightarrow T^*M$ is an isomorphism.
		\end{itemize}
	\end{itemize}
	An \textbf{LCS map} $\phi \colon (M_1,\eta_1,\omega_1) \rightarrow (M_2,\eta_2,\omega_2)$ is a map $\phi \colon M_1 \rightarrow M_2$ such that there exists a function $f \in C^{\infty}(M_1)$ with
	$$\phi^*\eta_2 = \eta_1+df\qquad \mathrm{and} \qquad \phi^*\omega_2 = e^f\omega_1.$$
	An LCS automorphism with underlying map given by the identity is called a \textbf{gauge equivalence}.
\end{dfn}

In what follows, we will mostly discuss our constructions and examples from the perspective of flat line bundles. However, for the purposes of concrete computations, when appropriate, we will recast these discussions in the gauged perspective.

\section{Twisted products of flat line bundles}
\label{sec:twisted-product-flat}

Recall that if $M_1$ and $M_2$ are smooth manifolds, then we have a natural product $M_1 \times M_2$ with comes with projections $\pi_j \colon M_1 \times M_2 \rightarrow M_j$. This allows us quite natural to mix the de Rham theory of $M_1$ and $M_2$ via the following two operations:
\begin{itemize}
	\item \textbf{Product:} We have a cochain map $$\Omega^{\bullet}(M_1) \otimes \Omega^{\bullet}(M_2) \rightarrow \Omega^{\bullet}(M_1 \times M_2)$$ given by
	$$(\alpha,\beta) \mapsto \pi_1^*\alpha \wedge \pi_2^*\beta.$$
	\item \textbf{Sum:} We have a cochain map $$\Omega^{\bullet}(M_1) \oplus \Omega^{\bullet}(M_2) \rightarrow \Omega^{\bullet}(M_1 \times M_2)$$ given by
	$$(\alpha,\beta) \mapsto \pi_1^*\alpha + \pi_2^*\beta.$$
\end{itemize}

Let us consider now the case of flat line bundles. Suppose that for $j=1,2$, we have that $(E_j,\nabla_j)$ is a line bundle over $M_j$ (where the projections $E_j \rightarrow M_j$ will be left implicit in the notation). The product naturally generalizes, yielding a cochain map
$$\Omega^{\bullet}(M_1;E_1) \otimes \Omega^{\bullet}(M_2;E_2) \rightarrow \Omega^{\bullet}(M_1 \times M_2; \pi_1^*E_1 \otimes \pi_2^*E_2)$$
given by the same formula
$$(\alpha,\beta) \mapsto \pi_1^*\alpha \wedge \pi_2^*\beta.$$
Unfortunately, however, the sum is not quite so easy. Indeed, $\pi_1^*\alpha$ is a section of $\pi_1^*E_1$, while $\pi_2^*\beta$ is a section of $\pi_2^*E_2$, and although we can multiply these just fine, if we try to add them, we see that $\pi_1^*E_1$ and $\pi_2^*E_2$ are not necessarily isomorphic, if for example $E_1$ is orientable and $E_2$ is not. Even if we restrict to the oriented case, we see that $\mathrm{hol}(\pi_1^*\nabla_1) = \pi_1^*\left(\mathrm{hol}(\nabla_1)\right)$ and $\mathrm{hol}(\pi_2^*\nabla_2) = \pi_2^*\left(\mathrm{hol}(\nabla_2)\right)$ may be different, and so trying to add $\pi_1^*\alpha$ and $\pi_2^*\beta$ is not a nice operation if we are looking for cochain maps.

The problem is that $M_1 \times M_2$ is not the right space on which to take sums. Instead, the underlying smooth space is what is constructed in Theorem \ref{thm:main_smooth}, if we take $\mathfrak{l}_1 = \ln(\mathrm{hol}(\nabla_1))$ and $\mathfrak{l}_2 = \ln(\mathrm{hol}(\nabla_2))$. First, in Section \ref{ssec:twisted_product}, we will define the twisted product smoothly in terms of these cohomology classes, proving Theorem \ref{thm:main_smooth}. Then, in Section \ref{ssec:tp_sum}, we will use Proposition \ref{prop:flat-line-bundle-facts} to work again with flat line bundles, allowing us to take sums of twisted differential forms upon an auxiliary but explicit choice. We end with Section \ref{ssec:tp_example}, in which we present an example of our construction.

\subsection{The twisted product} \label{ssec:twisted_product}

Let us recall a little covering space theory. If $N \trianglelefteq \pi_1(M,\ast)$ is a normal subgroup, then there is a covering space $q \colon \wt{M} \rightarrow M$ with deck group $\Gamma = \pi_1(M,\ast)/N$. If we are further given a cohomology class $\mathfrak{l} \in H^1(M;\R)$, which we think of as a group homomorphism $\rho_\mathfrak{l} \colon \pi_1(M,\ast) \rightarrow \R$, then so long as $0 \leq N \leq \mathrm{ker}(\rho_\mathfrak{l})$, we will have that $q^*\mathfrak{l} = 0 \in H^1(\wt{M};\R)$. We shall refer to such a connected cover with $q^*\mathfrak{l} = 0$ as a trivializing cover for the cohomology class $\mathfrak{l}$.

In this situation, any trivializing cover sits between the maximal one and minimal one. That is, it is covered by the universal cover corresponding to $N=0$, and it covers the minimal trivializing cover corresponding to $N = \ker(\rho_\mathfrak{l})$ (which have abelian deck group). We always have that $\rho_\mathfrak{l}$ factors through the projection $\pi_1(M,\ast) \rightarrow \pi_1(M,\ast)/N = \Gamma$ as a group homomorphism
$$\overline{\rho}_\mathfrak{l} \colon \Gamma \rightarrow \R.$$

Suppose now that we have manifolds $M_1$ and $M_2$ with cohomology classes $\mathfrak{l}_1 \in H^1(M;\R)$ and $\mathfrak{l}_2 \in H^1(M;\R)$. Pick trivializing covers $q_i \colon \wt{M}_i \rightarrow M_i$ with deck groups $\Gamma_i$, and let
$$\overline{\rho}_i \colon \Gamma_i \rightarrow \R$$
be the homomorphisms $\overline{\rho}_i := \overline{\rho}_{\mathfrak{l}_i}$. We obtain from these the fiber product
$$\Gamma_{12} := \{(g_1,g_2) \in \Gamma_1 \times \Gamma_2 \mid \overline{\rho}_1(g_1) = \overline{\rho}_2(g_2)\}$$
which comes with a map $\overline{\rho} \colon \Gamma_{12} \rightarrow \R$ given by $\overline{\rho}(g_1,g_2) := \overline{\rho}_1(g_1) = \overline{\rho}_2(g_2)$. In fact, $\Gamma_{12}$ is a normal subgroup of $\Gamma_1 \times \Gamma_2$. We define
$$M_1 \boxplus M_2 := \wt{M}_1 \times \wt{M}_2/\Gamma_{12}.$$
We note that there are still natural projections
$$r_i \colon M_1 \boxplus M_2 \rightarrow M_i.$$
A priori, this construction depends upon the choice of trivializing covers $q_1$ and $q_2$. However, it is actually independent of this data, as the following proposition shows:

\begin{prop}
	Let $\mathbf{M}_1$ and $\mathbf{M}_2$ be the universal covers of $M_1$ and $M_2$, respectively, with deck groups $\pi_1(M_i,\ast_i)$. Let $\rho_i:\pi_1(M_i,\ast_i)\rightarrow \mathbb{R}$ be given as $\rho_i := \rho_{\mathfrak{l}_i}$. We denote $$\mathbf{\Gamma}:=\{(g_1,g_2) \in \pi_1(M_1,\ast_1) \times \pi_1(M_2,\ast_2) |\mathbf{\rho}_1(g_1)=\mathbf{\rho}_2(g_2)\}.$$ Then we have $\mathbf{M}_1\times \mathbf{M}_2/\mathbf{\Gamma}\cong M_1\boxplus M_2$.
\end{prop}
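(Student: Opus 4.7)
The plan is to show that quotienting $\mathbf{M}_1 \times \mathbf{M}_2$ by $\mathbf{\Gamma}$ may be carried out in two stages: first quotient by a subgroup which produces exactly $\wt{M}_1 \times \wt{M}_2$, and then quotient by the remaining piece which will turn out to be $\Gamma_{12}$. So the first step is to set up the relevant exact sequences of deck groups. Write $N_i := \ker\bigl(\pi_1(M_i,\ast_i) \twoheadrightarrow \Gamma_i\bigr)$, so that $\wt{M}_i \cong \mathbf{M}_i/N_i$ by covering space theory. Since each $\wt{M}_i$ is a trivializing cover, the homomorphism $\rho_i$ factors through $\Gamma_i$, giving the inclusion $N_i \subseteq \ker(\rho_i)$; this is the only place where the trivializing hypothesis on $\wt{M}_i$ enters.

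Next I would verify that $N_1 \times N_2$ is a normal subgroup of $\mathbf{\Gamma}$. Normality in $\pi_1(M_1,\ast_1) \times \pi_1(M_2,\ast_2)$ is immediate, and the containment $N_1 \times N_2 \subseteq \mathbf{\Gamma}$ uses precisely $N_i \subseteq \ker(\rho_i)$ shown above, since $(g_1,g_2) \in N_1 \times N_2$ gives $\rho_1(g_1) = 0 = \rho_2(g_2)$. Because $(N_1 \times N_2)$ acts on $\mathbf{M}_1 \times \mathbf{M}_2$ factor by factor, taking the quotient first yields
$$(\mathbf{M}_1 \times \mathbf{M}_2)/(N_1 \times N_2) \;\cong\; \wt{M}_1 \times \wt{M}_2.$$

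The key step is then to identify $\mathbf{\Gamma}/(N_1 \times N_2)$ with $\Gamma_{12}$. The quotient map $\pi_1(M_1,\ast_1) \times \pi_1(M_2,\ast_2) \twoheadrightarrow \Gamma_1 \times \Gamma_2$ restricted to $\mathbf{\Gamma}$ has kernel exactly $N_1 \times N_2$ (by the previous paragraph) and image contained in $\Gamma_{12}$ because the homomorphisms $\rho_i$ descend to $\overline{\rho}_i$ on $\Gamma_i$. Surjectivity onto $\Gamma_{12}$ is the main thing to check: given $(\bar g_1,\bar g_2) \in \Gamma_{12}$ lift to arbitrary representatives $(g_1,g_2) \in \pi_1(M_1,\ast_1) \times \pi_1(M_2,\ast_2)$; then $\rho_i(g_i) = \overline{\rho}_i(\bar g_i)$, and $\overline{\rho}_1(\bar g_1) = \overline{\rho}_2(\bar g_2)$ by hypothesis, so $(g_1,g_2) \in \mathbf{\Gamma}$. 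This gives the short exact sequence
$$1 \longrightarrow N_1 \times N_2 \longrightarrow \mathbf{\Gamma} \longrightarrow \Gamma_{12} \longrightarrow 1.$$

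Combining both stages yields the desired isomorphism
$$\mathbf{M}_1 \times \mathbf{M}_2/\mathbf{\Gamma} \;\cong\; (\wt{M}_1 \times \wt{M}_2)/\Gamma_{12} \;=\; M_1 \boxplus M_2.$$
I expect the argument to be essentially bookkeeping with covering space theory, with no serious obstacle beyond carefully tracking which homomorphisms factor through which quotients. The mild subtlety is that $\mathbf{\Gamma}/(N_1 \times N_2)$ is not obviously $\Gamma_{12}$ \emph{as a subgroup of} $\Gamma_1 \times \Gamma_2$ rather than just abstractly, but the surjectivity argument above makes this identification canonical, which is what one needs for the quotient of the topological space to match $M_1 \boxplus M_2$ on the nose.
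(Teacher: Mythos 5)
Your argument is correct and is essentially the paper's own proof: the paper also passes through the standard projection $g \colon \pi_1(M_1,\ast_1)\times\pi_1(M_2,\ast_2)\rightarrow \Gamma_1\times\Gamma_2$, the induced covering $\mathbf{M}_1\times\mathbf{M}_2\rightarrow\wt{M}_1\times\wt{M}_2$, and the identity $g(\mathbf{\Gamma})=\Gamma_{12}$ to conclude $\mathbf{M}_1\times\mathbf{M}_2/\mathbf{\Gamma}\cong\wt{M}_1\times\wt{M}_2/\Gamma_{12}$. You merely spell out what the paper leaves implicit, namely that $\ker\bigl(g|_{\mathbf{\Gamma}}\bigr)=N_1\times N_2$ and that $g|_{\mathbf{\Gamma}}$ surjects onto $\Gamma_{12}$, both of which are correctly verified.
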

\begin{proof}
	Let $g:\pi_1(M_1,\ast_1)\times \pi(M_2,\ast_2)\rightarrow  \Gamma_1\times \Gamma_2$ be the standard projection. This yields a covering map $\mathbf{M}_1\times \mathbf{M}_2\rightarrow \widetilde{M}_1\times \widetilde{M}_2$ such that $g$ intertwines the deck group action as covering maps over $M_1 \times M_2$. Since $g(\mathbf{\Gamma})=\Gamma_{12}$, we have natural identifications
	$$\mathbf{M_1} \times \mathbf{M_2}/\mathbf{\Gamma} \cong \wt{M}_1 \times \wt{M}_2/\Gamma_{12} = M_1 \boxplus M_2.$$
\end{proof}

\begin{cor} \label{cor:cohom_on_tp}
	The map
	$$r := (r_1,r_2) \colon M_1 \boxplus M_2 \rightarrow M_1 \times M_2$$
	is a covering map, and $r_1^*\mathfrak{l}_1 = r_2^*\mathfrak{l}_2$.
\end{cor}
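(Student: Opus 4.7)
The plan for proving the corollary has two independent parts: the covering-map assertion and the equality of pulled-back cohomology classes.

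For the first part, I would argue purely from the setup. The product covering $q_1 \times q_2 \colon \wt{M}_1 \times \wt{M}_2 \to M_1 \times M_2$ is a normal (Galois) covering with deck group $\Gamma_1 \times \Gamma_2$, and the definition $M_1 \boxplus M_2 = \wt{M}_1 \times \wt{M}_2 / \Gamma_{12}$ factors this covering through an intermediate one, since $\Gamma_{12}$ is a normal subgroup of $\Gamma_1 \times \Gamma_2$. The residual quotient by $(\Gamma_1 \times \Gamma_2)/\Gamma_{12}$ is precisely $r$, and a quotient by a free, properly discontinuous group action is a covering map.

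For the second part, the cleanest route is to identify $H^1(X;\R)$ with $\mathrm{Hom}(\pi_1(X),\R)$ and exploit the fact that pullback along a covering map corresponds to restriction to the image of the induced map on fundamental groups. Fix compatible basepoints so that $r_*$ embeds $\pi_1(M_1 \boxplus M_2)$ into $\pi_1(M_1) \times \pi_1(M_2)$ as the kernel of the monodromy representation $\pi_1(M_1) \times \pi_1(M_2) \to \Gamma_1 \times \Gamma_2 \to (\Gamma_1 \times \Gamma_2)/\Gamma_{12}$. A pair $(g_1,g_2)$ lies in this kernel if and only if its image in $\Gamma_1 \times \Gamma_2$ belongs to $\Gamma_{12}$, i.e., $\overline{\rho}_1(\bar g_1) = \overline{\rho}_2(\bar g_2)$. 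Since the homomorphisms $\rho_{\mathfrak{l}_i}$ factor through $\Gamma_i$, this equivalently reads $\rho_{\mathfrak{l}_1}(g_1) = \rho_{\mathfrak{l}_2}(g_2)$.

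Evaluating the classes on such a loop $\gamma \in \pi_1(M_1 \boxplus M_2)$ with $r_*\gamma = (g_1,g_2)$ then gives
$$\langle r_1^*\mathfrak{l}_1,\gamma \rangle = \rho_{\mathfrak{l}_1}(g_1) = \rho_{\mathfrak{l}_2}(g_2) = \langle r_2^*\mathfrak{l}_2,\gamma \rangle,$$
and since this holds on every loop, $r_1^*\mathfrak{l}_1 = r_2^*\mathfrak{l}_2$ in $H^1(M_1 \boxplus M_2;\R)$. The only mildly delicate point is keeping track of basepoints and checking that $r_*$ really is the injection into $\pi_1(M_1 \times M_2)$ with the advertised image, but this is just standard covering space theory; a tempting alternative, pulling back along $\tilde r \colon \wt M_1 \times \wt M_2 \to M_1 \boxplus M_2$ where both classes vanish, does not suffice because $\tilde r^*$ need not be injective on $H^1$, hence the direct fundamental-group approach is the cleaner one.
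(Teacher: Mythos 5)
Your proof is correct and follows essentially the same route as the paper: both arguments identify the image of $r_*$ in $\pi_1(M_1)\times\pi_1(M_2)$ with the fiber product $\mathbf{\Gamma}=\{(g_1,g_2)\mid \rho_1(g_1)=\rho_2(g_2)\}$ and then read off the equality $r_1^*\mathfrak{l}_1=r_2^*\mathfrak{l}_2$ by evaluating on loops. Your closing remark correctly identifies why the tempting shortcut through $\wt M_1\times\wt M_2$ fails, which is a point worth being explicit about.
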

\begin{proof}
	The fact that $r$ is a covering map is evident by construction. By the previous proposition, we have a commutative diagram
	$$\xymatrix{\pi_1(M_1 \boxplus M_2) \ar@/_1pc/[rdd]_-{(r_2)_*} \ar@/^1pc/[rrd]^{(r_1)_*} \ar[rd]^{\sim} & & \\ & \mathbf{\Gamma} \ar[r] \ar[d] & \pi_1(M_1,\ast_1) \ar[d]^-{\rho_1} \\ & \pi_1(M_2,\ast_2) \ar[r]^-{\rho_2} & \R}$$
	In particular, we have
	$$\rho_{r_1^*\mathfrak{l}_1} = \rho_1 \circ (r_1)_* = \rho_2 \circ (r_2)_* = \rho_{r_2^*\mathfrak{l}_2}.$$
	Thus $r_1^*\mathfrak{l}_1 = r_2^*\mathfrak{l}_2$.
\end{proof}

We may now prove Theorem \ref{thm:main_smooth}.

\begin{proof}[Proof of Theorem \ref{thm:main_smooth}]
We have constructed the covering map $r \colon M_1 \boxplus M_2 \rightarrow M_1 \times M_2$ as desired, and it is indeed canonical up to deck transformations. Item (1) is proved as Corollary \ref{cor:cohom_on_tp}, where the minimality of the covering space is precisely due to the fact that $\mathbf{\Gamma}$ is defined as a pullback.

Let us construct the embeddings $\Gamma(\phi)$ appearing in Item (2). Suppose that we have a smooth map $\phi \colon M_1 \rightarrow M_2$ with $\phi^*\mathfrak{l}_2 = \mathfrak{l}_1$. As above, we have
$$\pi_1(M_1 \boxplus M_2) \cong \mathbf{\Gamma} := \{(g_1,g_2) \in \pi_1(M_1) \times \pi_1(M_2) \mid \rho_1(g_1) = \rho_2(g_2)\}.$$
Suppose $g \in \pi_1(M_1)$, and consider the element
$$(\Gamma_0(\phi))_*(g) \in \pi_1(M_1 \times M_2) = \pi_1(M_1) \times \pi_1(M_2).$$
If $p_i \colon M_1 \times M_2 \rightarrow M_i$ are the usual projections, then $p_1 \circ \Gamma_0(\phi) = \mathrm{Id}$ and $p_2 \circ \Gamma_0(\phi) = \phi$, and hence
$$(\Gamma_0(\phi))_*(g) = (g,\phi_*(g)).$$
But since $\phi^*\mathfrak{l}_2 = \mathfrak{l}_1$, we have
$$\rho_1(g) = \rho_2(\phi_*(g)),$$
and so $(\Gamma_0(\phi))_*$ has image in $\mathbf{\Gamma}$. Hence, $\Gamma_0(\phi)$ lifts to a smooth map
$$\Gamma(\phi) \colon M_1 \rightarrow M_1 \boxplus M_2,$$
uniquely up to deck transformations of $r$, and which is an embedding since $\Gamma_0(\phi)$ is an embedding. This completes the proof of (2).

Finally, for Item (3), notice that we may take the same covering $q \colon \wt{M} \rightarrow M$ in our construction. Let us take $q$ to be the minimal such cover, so that the homomorphism $\overline{\rho}_{\mathfrak{\ell}} \colon \Gamma \rightarrow \R$ is injective. We see then that
$$M \boxplus M = \wt{M} \times \wt{M}/\Gamma,$$
where $\Gamma$ acts freely by the diagonal action $g \cdot (x,y) = (g \cdot x, g \cdot y)$. For a family $\phi_t$ of diffeomorphisms of $M$, note that by path-lifting, we obtain a canonical diffeomorphism $\wt{\phi}_t$ of $\wt{M}$, where $\wt{\phi}_t(x) = \phi_t(q(x))$. Furthermore, this is equivariant, in the sense that
$$\wt{\phi}_t(g \cdot x) = g \cdot \wt{\phi}_t(x).$$
We define $\wt{\psi}_t \in \mathrm{Diff}(\wt{M} \times \wt{M})$ by
$$\wt{\psi_t}(x,y) = (x,\wt{\phi}_t(x)).$$
This is equivariant with respect to the diagonal action of $\Gamma$ on $\wt{M} \times \wt{M}$, and hence descends to the desired diffeomorphisms $\psi_t \in \mathrm{Diff}(M \boxplus M)$. We clearly have $r_1 \circ \psi_t = r_1 = \mathrm{Id} \circ r_1$ and $r_2 \circ \psi_t = \phi_t \circ r_2$, which is to say that $\psi_t$ is a lift of $\mathrm{Id} \times \phi_t$ as desired.

Notice that we have the diagonal map $\wt{M} \xrightarrow{\Delta} \wt{M} \times \wt{M}$ which is equivariant with respect to the natural action of $\Gamma$, and hence descends to a map $M \rightarrow M \boxplus M$. We see that we may take this as $\Gamma(\mathrm{Id})$. On the other hand, we have that
$$r_1 \circ (\psi_t \circ \Gamma(\mathrm{Id})) = (r_1 \circ \psi_t) \circ \Gamma(\mathrm{Id}) = r_1 \circ \Gamma(\mathrm{Id}) = \mathrm{Id}$$
$$r_2 \circ (\psi_t \circ \Gamma(\mathrm{Id})) = (r_2 \circ \psi_t) \circ \Gamma(\mathrm{Id}) = (\phi_t \circ r_2) \circ \Gamma(\mathrm{Id}) = \phi_t \circ (r_2 \circ \Gamma(\mathrm{Id})) = \phi_t \circ \mathrm{Id} = \phi_t.$$
Hence, $r \circ (\psi_t \circ \Gamma(\mathrm{Id})) = \Gamma_0(\phi_t)$, which is to say that $\psi_t \circ \Gamma(\mathrm{Id})$ is a canonical choice of $\Gamma(\phi_t)$.

Finally, suppose we have an intersection point of $\Gamma(\mathrm{Id})$ and $\Gamma(\phi_1)$. Since these are embeddings, such intersections are in bijection with pairs $(x,y) \in (M,M)$ such that
$$\Gamma(\mathrm{Id})(x) = \Gamma(\phi_1)(y).$$
Applying $r_1$ to both sides, we see that $x=y$ Hence, such intersection points are in bijection with those points $x \in M$ with
$$\Gamma(\mathrm{Id})(x) = \Gamma(\phi_1)(x),$$
i.e. for which
$$\Gamma(\mathrm{Id})(x) = \psi_1(\Gamma(\mathrm{Id})(x)).$$
Applying $r_2$, we find that $x = \phi_1(x)$, so $x \in \mathrm{Fix}(\phi_1)$. Let $\wt{x} \in q^{-1}(x) \subset \wt{M}$ be any fixed lift, so that $(\wt{x},\wt{x}) \in \wt{M} \times \wt{M}$ is a lift of $\Gamma(\mathrm{Id}(x))$. Then the above equation is satisfied if there is some $g \in \Gamma$ such that
$$\wt{\psi}_1(\wt{x},\wt{x}) = g \cdot (\wt{x},\wt{x}).$$
We see that since the action of $\Gamma$ is free, this is equivalent to requiring $\wt{\phi}_t(\wt{x}) = \wt{x}$. But we have $\wt{\phi}_t(\wt{x}) = g \cdot \wt{x}$ where $g$ is the image of the path $[\phi_t(x)]$ under the quotient map $\pi_1(M) \rightarrow \Gamma$. We have $g$ is the identity if and only if $[\phi_t(x)]$ is in the kernel of this map, which is to say that
$$\langle \mathfrak{l}, [\phi_t(x)] \rangle = 0.$$
Hence, intersection points of $\Gamma(\mathrm{Id})$ and $\Gamma(\phi_1)$ are in bijection with those fixed points $x \in M$ with $\langle \mathfrak{l}, [\phi_t(x)] \rangle = 0$. The correspondence between transverse intersections and non-degenerate fixed points is the same as for the usual product since $r$ is a local diffeomorphism.
\end{proof}

\subsection{Adding twisted differential forms} \label{ssec:tp_sum}

Let us now get back to our desire to add twisted differential forms. Suppose that $\pi_i \colon (E_i,\nabla_i) \rightarrow M_i$ is a flat line bundle, and set $\mathfrak{l}_i := \ln(\mathrm{hol}(\nabla_i)) \in H^1(M_i;\R)$. Our sum construction produced a covering map
$$r = (r_1,r_2) \colon M_1 \boxplus M_2 \rightarrow M_1 \times M_2$$
such that $r_1^*\mathfrak{l}_1 = r_2^*\mathfrak{l}_2$. In particular, we have that
$$\mathrm{hol}(r_1^*\nabla_1) = \mathrm{hol}(r_2^*\nabla_2) \in H^1(M_1 \boxplus M_2; \R^+).$$
By Proposition \ref{prop:flat-line-bundle-facts}, this means there is a corresponding $\R^+$-torsor of isomorphisms of oriented flat line bundles $r_2^*(E_2,\nabla_2) \xrightarrow{\sim} r_1^*(E_1,\nabla_1)$. We are finally able to define our desired sum on twisted differential forms.

\begin{dfn}
	Let $\Xi$ be the $\R^+$-torsor of isomorphisms of oriented flat line bundles
	$$\{\Phi:(r_2^*E_2,r_2^*\nabla_2)\xrightarrow{\sim} (r_1^*E_1,r_1^*\nabla_1)\}.$$
	Then for each $\Phi \in \Xi$, we define the chain map
	$$\boxplus_{\Phi}:\Omega^{\bullet}(M_1;E_1)\oplus \Omega^{\bullet}(M_2:E_2)\rightarrow \Omega^{\bullet}(M_1\boxtimes M_2;r_1^*E_1)$$
	given by
	$$\alpha \boxplus_{\Phi} \beta := r_1^*\alpha+\Phi(r_2^*\beta).$$
\end{dfn}

\begin{rem}
	The construction is symmetric in the sense that we may alternatively take the codomain as $\Omega^{\bullet}(M_1 \boxplus M_2; r_2^*E_2)$, using the formula $\Phi^{-1}(r_1^*\alpha) + r_2^*\beta$.
\end{rem}

Let us consider the gauged perspective for this twisted product of forms. Suppose we have fixed oriented trivialisations
$$s_j \colon E_j \xrightarrow{\sim} \underline{\R}$$
for $j=1,2$ of oriented line bundles over $M_1$ and $M_2$. Notice that these give also trivialisations
$$r_j^*s_j \colon r_j^*E_j \xrightarrow{\sim} \underline{\R}$$
over $M_1 \boxplus M_2$. In particular, if the trivialisation $s_j$ intertwines $d_{\nabla_j}$ with $d + \eta_j$ for a closed $1$-form $\eta_j \in \Omega^1(M_j)$, then $r_j^*s_j$ intertwines $d_{r_j^*\nabla_j}$ with $d+r_j^*\eta_j \wedge$.

Suppose now that $\Xi$ is the $\R^+$-torsor of bundle isomorphisms
$$\Xi := \mathrm{Iso}((r_2^*E_2,r_2^*\nabla_2),(r_1^*E_1,r_1^*\nabla_1)).$$
For each $\Phi \in \Xi$, we have a composition of morphisms of oriented line bundles over $M_1 \boxplus M_2$
$$\underline{\R} \xrightarrow{(r_2^*s_2)^{-1}} r_2^*E_2 \xrightarrow{\Phi} r_1^*E_1 \xrightarrow{r_1^*s_1} \underline{\R}.$$
Including the connections, this is an element
$$\Phi' \in \mathrm{Iso}((\underline{\R},d+r_2^*\eta_2 \wedge), (\underline{\R},d+r_1^*\eta_1 \wedge)).$$
As an isomorphism of trivial oriented line bundles, this is just multiplication by some positive function, and so we may equivalently think of $\Phi'$ as a positive function on $C^{\infty}(M_1 \boxplus M_2)$. Explicitly, we have that $r_1^*\eta_1$ and $r_2^*\eta_2$ are cohomologous since they represent the logarithms of the holonomies of $r_1^*\nabla_1$ and $r_2^*\nabla_2$. As such, there is an $\R$-torsor of functions $f \in C^{\infty}(M_1 \boxplus M_2)$ with $r_2^*\eta_2 = r_1^*\eta_1+df$. The functions $e^f$ naturally define an $\R^+$-torsor $\Xi'$, and we obtain an isomorphism of $\R^+$-torsors $\chi \colon \Xi \xrightarrow{\sim} \Xi'$ given by $\chi(\Phi) = \Phi'$.

For each $\Phi' \in \Xi'$ with $\Phi' = \chi(\Phi)$ for some $\Phi \in \Xi$, we therefore obtain a map of cochain complexes $\boxplus_{\Phi'}$ given by the dotted arrow in the following diagram:
$$\xymatrix{(\Omega^{\bullet}(M_1;E_1),\nabla_1)\oplus (\Omega^{\bullet}(M_2;E_2),\nabla_2)\ar^-{\boxplus_{\Phi}}[r] \ar[d]_{s_1 \oplus s_2} & (\Omega^{\bullet}(M_1\boxplus M_2;r_1^*E_1),r_1^*\nabla_1) \ar[d]^{r_1^*s_1} \\ (\Omega^{\bullet}(M_1),d+\eta_1 \wedge) \oplus (\Omega^{\bullet}(M_2), d+\eta_2 \wedge) \ar@{-->}^-{\boxplus_{\Phi'}}[r] & (\Omega^{\bullet}(M_1 \boxplus M_2),d+r_1^*\eta_1 \wedge )}.$$
Thinking of $\Phi'$ as a positive function in $C^{\infty}(M_1 \boxplus M_2)$, the operation $\boxplus_{\Phi'}$ is just
$$\alpha \boxplus_{\Phi'} \beta = r_1^*\alpha + \Phi' \cdot r_2^*\beta.$$
We see that since in the diagram, the vertical arrows are isomorphisms, we have that $\boxplus_{\Phi}$ and $\boxplus_{\Phi'}$ determine each other. Of course, the $\R^+$-torsor $\Xi'$ depends on $\eta_1$ and $\eta_2$, and the identification $\chi \colon \Xi \xrightarrow{\sim} \Xi'$ depends upon the choice of trivialisations $s_1$ and $s_2$ which intertwine $d_{\nabla_j}$ with $d+\eta_j \wedge$.

\subsection{An example} \label{ssec:tp_example}

To finish this section, we give an example that will be relevant in the future.

\begin{ex}\label{ex-stabilisation}
	On $S^1 = \R/\Z$, any flat oriented line bundle is isomorphic to $(\underline{\R},\nabla_{\alpha})$ for some $\alpha \in \R$, where
	$$\langle \ln(\mathrm{hol}(\nabla_{\alpha})), \gamma \rangle = \alpha$$
	for $\gamma$ a loop winding around $S^1$ once. Explicitly, since the bundle is trivial, we have an identification $\Omega^{\bullet}(S^1;\underline{\R}) = \Omega^{\bullet}(S^1)$, and $\nabla_{\alpha}$ is the map $\Omega^{0}(S^1) \rightarrow \Omega^1(S^1)$ with
	$$\nabla_{\alpha}(f) = df + \alpha f d\theta.$$
	
	Suppose also that on a manifold $M$, we have a flat oriented line bundle $(E,\nabla_E)$ with $\ln(\mathrm{hol}(\nabla_E)) = \mathfrak{l}$ such that the period subgroup
	$$\mathrm{im}(\mathfrak{l} \colon \pi_1(M) \rightarrow \R)$$
	is given by $\Z\alpha$ for the same $\alpha$. (The example of $S^1$ with $(\underline{\R},\nabla_{\alpha})$ is an example of this.) Let us further suppose that $\alpha \neq 0$. In such a setting, let us compute $M \boxplus S^1$.
	
	Let $p:\widetilde{M}\rightarrow M$ be the minimal cover so that $p^*\mathfrak{l}=0$. The deck transformation group is $\mathbb{Z}$ and the morphism $\mathfrak{l} \colon \pi_1(M) \rightarrow \R$ induces the map $\overline{\mathfrak{l}} \colon \Z \rightarrow \R$ given by $k \mapsto k\alpha$. Let $f:\widetilde{M}\rightarrow \mathbb{R}$ be any map so that $f(k\cdot q)=f(q)+k$. Explicitly, $f$ may be taken as follows: if $\eta \in \Omega^1(M)$ is a closed $1$-form representing the cohomology class $\mathfrak{l} \in H^1(M;\R)$, then $p^*\eta$ is exact, and we may take our function $f$ as any of the solutions (determined up to a constant) to the equation $\alpha df = p^*\eta$. For $S^1$ with $(\underline{\R},\nabla_{\alpha})$, the minimal cover is the universal one $p \colon \R \rightarrow S^1$.
	
	In order to form the product $M \boxplus S^1$, we therefore need to take the quotient of $\wt{M} \times \R$ by the corresponding group $\Gamma_{12}$, which is just the diagonal $\Z \leq \Z \times \Z$. In other words, we have an action of $\Z$ on $\wt{M} \times \R$ given by
	$$k \cdot (q,t) = (k \cdot q, t+k)$$
	and we have
	$$M \boxplus S^1 = (\wt{M} \times \R)/\Z.$$
	The function $f$ gives us a diffeomorphism of $\wt{M} \times \R$ given by $(q,t) \mapsto (q,t+f(q))$, which intertwines the natural action of $\Gamma_{12}=\Z$ with the action given by
	$$k \cdot (q,t) = (k \cdot q, t).$$
	Therefore, we obtain an identification $M \boxplus S^1 \cong M \times \R$. The corresponding projection $M \boxplus S^1 \rightarrow M \times S^1$ is identified with the projection
	$$M \times \R \rightarrow M \times S^1$$
	given by $(q,t) \mapsto (q,t+f(\wt{q}))$ (for any $\wt{q}$ with $p(\wt{q})=q$). In other words, we may think of $r = (r_1,r_2)$ with $r_1 \colon M \times \R \rightarrow M$ the usual projection and with $r_2 \colon M \times \R \rightarrow S^1$ given by $r_2(q,t) = t+f(\wt{q})$.


This is particularly nice from the gauged perspective. Under this identification $M \boxplus S^1 \cong M \times \R$, we find $r_1^*\eta = r_2^*(\alpha d\theta)-\alpha dt$ (where $t$ is the $\R$-coordinate), and correspondingly, each operation
	$$(\Omega^{\bullet}(M),d+\eta\wedge ) \oplus (\Omega^{\bullet}(S^1), d+\alpha d\theta \wedge) \xrightarrow{\boxplus_{\Phi}} (\Omega^{\bullet}(M \boxplus S^1), d+r_1^*\eta \wedge)$$
	is given by
	$$\mu \boxplus_{\Phi} \nu := r_1^*\mu + e^{c+\alpha t} \cdot r_2^*\nu$$
	for some $c \in \R$ (determining and determined by $\Phi \in \Xi$). Finally, we may think of this as a form on $\Omega^{\bullet}(M \times \R)$ via our identification before, so that $r_1^*\mu$ and $r_2^*\nu$ in the above formula may be explicitly computed. Since $r_1$ is just the usual projection, $r_1^*\mu$ is clear. On the other hand, we have for example
	$$r_2^*(gd\theta) = g(t+f(\widetilde{q})) \cdot (dt + \eta/\alpha).$$

\end{ex}
\color{black}

\section{LCS geometry}
\label{sec:LCS}

Recall from the introduction the definition of LCS manifolds and LCS maps. We shall often write $(M,E,\nabla,\omega)$ for the data of an LCS manifold, where the projection $(E,\nabla) \rightarrow M$ will be left implicit. Notice that an LCS map is an invertible if and only if the underlying map $\phi \colon M_1 \rightarrow M_2$ is a global diffemorphism; we shall refer to such invertible LCS maps as \textbf{LCS symplectomorphisms}.

We recall Remark \ref{rem:base_only}: although an LCS map consists of a bundle map, written as a pair $(\phi,\Phi)$, the underlying map $\phi$ at the level of manifolds determines $\Phi$. Indeed, since $\Phi$ is a morphism of flat oriented line bundles, it is determined up to a constant by Proposition \ref{prop:flat-line-bundle-facts}, and the condition $\Phi^*\omega' = \omega$ therefore determines $\Phi$ uniquely.

It is natural, therefore, to ask which maps $\phi$ of smooth manifolds are the base of LCS maps. As a specific example, fix an LCS manifold $\scr{M} = (M,E,\nabla,\omega)$, and suppose that $X_t$ is a (possibly non-autonomous) family of vector fields on $M$ with flow given by $\phi_t$. We recall from Section \ref{sec:flat-line-bundles} that parallel transport may be considered as a family $F_{\phi_t} \in \mathrm{Iso}_{\phi_t}((E,\nabla),(E,\nabla))$.

Furthermore, by Proposition \ref{prop:flat-line-bundle-facts}, any family of automorphisms of $(E,\nabla)$ covering $\phi_t$ must be of the form $e^{c_t}F_{\phi_t}$  for some family of constants $c_t$. Therefore, $\phi_t$ is a family of underlying LCS automorphisms of $\scr{M}$ if and only if there is a family of constants $c_t$ such that
$$(\phi_t,e^{c_t}F_{\phi_t}) \colon (M,E,\nabla,\omega) \rightarrow (M,E,\nabla,\omega)$$
is an LCS map. Setting
$$\omega_t := F_{\phi_t}^*\omega,$$
we find therefore that $\phi_t$ is a family of LCS maps if and only if
$$\omega_t = e^{c_t}\omega$$
for a family of constants $c_t$. Taking a derivative in $t$, this is the case if and only if
$$\left(\dot{c}_t\right) \cdot \omega_t = \scr{L}_{X_t}^{\scr{E}}\omega_t.$$
Applying the Cartan formula of Theorem \ref{thm:cartan}, and noting that $d_\nabla \omega_t = 0$, we find
$$\dot{c}_t \cdot \omega_t = d_\nabla(\iota_{X_t}\omega).$$
We obtain the following definition and proposition which summarizes this discussion.

\begin{dfn} \label{dfn:LCS_vf}
	A vector field $X \in \Gamma(TM)$ is an \textbf{LCS vector field} for an LCS structure $(M,E,\nabla,\omega)$ if
	$$d_\nabla(\iota_X\omega) = \mu_X \omega$$
	for a constant $\mu_X \in \R$ which we call the \textbf{expansion constant of $X$}.
\end{dfn}

\begin{prop}\cite[Lemma 1]{allais2022dynamics} \label{prop:LCS_vector_fields}
	Suppose $\scr{M} = (M,E,\nabla,\omega)$ is an LCS manifold, and let $\phi_t$ be the flow of a family of vector fields $X_t$ on $M$. Then $\phi_t$ extends (uniquely) to an LCS automorphisms $(\phi_t,\Phi_t)$ of $\scr{M}$ if and only if each $X_t$ is an LCS vector field. In such a case, the bundle map $\Phi_t$ is given by
	$$\Phi_t = e^{\int_0^{t}\mu_{X_{\tau}}d\tau}F_{\phi_t}.$$
\end{prop}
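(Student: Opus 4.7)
The discussion preceding the proposition essentially reduces the if-and-only-if to an integration step, so my plan is to formalize that reduction, perform the integration, and then verify uniqueness.

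First I would appeal to Proposition~\ref{prop:flat-line-bundle-facts} at each time $t$: since $F_{\phi_t}$ is itself a morphism of flat oriented line bundles $(E,\nabla) \to (E,\nabla)$ covering $\phi_t$, any other such morphism $\Phi_t$ is obtained from it by multiplication by a positive constant $e^{c_t}$; moreover, the normalization $\Phi_0 = \mathrm{Id} = F_{\phi_0}$ forces $c_0 = 0$. Writing $\omega_t := F_{\phi_t}^*\omega$ as in the excerpt, the condition that the pair $(\phi_t, e^{c_t} F_{\phi_t})$ be an LCS automorphism of $\scr{M}$ rewrites as $\omega_t = e^{c_t}\omega$ for each $t$.

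Next I would differentiate this identity in $t$. Using the time-dependent generalization of Cartan's formula, one has $\tfrac{d}{dt} F_{\phi_t}^*\omega = F_{\phi_t}^*\mathcal{L}_{X_t}^{\scr{E}}\omega$. Plugging this in and pulling back by $F_{\phi_t}^{-1}$, the equation $\omega_t = e^{c_t}\omega$ becomes $\mathcal{L}_{X_t}^{\scr{E}}\omega = \dot c_t\,\omega$. By Cartan's formula (Theorem~\ref{thm:cartan}) and $d_\nabla\omega = 0$, this is equivalent to $d_\nabla \iota_{X_t}\omega = \dot c_t\,\omega$, which is precisely the statement that each $X_t$ is an LCS vector field with expansion constant $\mu_{X_t} = \dot c_t$. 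Running the argument in reverse and integrating the resulting scalar ODE with the initial condition $c_0 = 0$ yields $c_t = \int_0^t \mu_{X_\tau}\,d\tau$, from which the asserted formula $\Phi_t = e^{\int_0^t \mu_{X_\tau}d\tau}F_{\phi_t}$ follows. Uniqueness of $\Phi_t$ is built into this argument since Proposition~\ref{prop:flat-line-bundle-facts} pins down every automorphism of $(E,\nabla)$ up to a positive scalar and the normalization $\Phi_0 = \mathrm{Id}$ fixes the constant of integration.

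The main (minor) obstacle is the verification of the time-dependent Lie-derivative identity $\tfrac{d}{dt}F_{\phi_t}^*\omega = F_{\phi_t}^*\mathcal{L}_{X_t}^{\scr{E}}\omega$ used above: this is standard in the untwisted setting, and its extension to the twisted setting is a routine local check after identifying $(E,\nabla)$ with the trivial flat bundle on a small neighborhood, exactly as in the proof of Theorem~\ref{thm:cartan}. A second subtlety is ensuring that $c_t$ really is a (global) constant rather than a function on $M$: this is automatic because $\dot c_t \,\omega = \mathcal{L}_{X_t}^{\scr{E}}\omega$ must hold pointwise and, by the non-degeneracy of $\omega$ on the connected manifold $M$ together with the definition of an LCS vector field, the ratio on the right-hand side is already forced to be a single real scalar.
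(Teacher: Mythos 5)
Your proposal is correct and follows essentially the same route as the paper, which presents this argument as the discussion immediately preceding the proposition (Proposition~\ref{prop:flat-line-bundle-facts} forces $\Phi_t = e^{c_t}F_{\phi_t}$ with $c_t$ constant, the LCS condition becomes $\omega_t = e^{c_t}\omega$, differentiation plus the twisted Cartan formula of Theorem~\ref{thm:cartan} gives $\dot c_t\,\omega = d_\nabla(\iota_{X_t}\omega)$, and integration with $c_0=0$ yields the stated formula). Your handling of the time-dependent Lie-derivative identity and of the uniqueness via Remark~\ref{rem:base_only}/Proposition~\ref{prop:flat-line-bundle-facts} is, if anything, slightly more careful than the paper's own write-up.
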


A specific example are the LCS Hamiltonian vector fields briefly discussed in the introduction.

\begin{dfn}\label{dfn:LCS-Hamiltonian}
	Suppose $H \in \Gamma(E)$ is a section of $E$, called an \textbf{LCS-Hamiltonian function}. Then it has an associated \textbf{LCS-Hamiltonian vector field} $X_H$ on $M$ defined by
	$$\iota_{X_H}\omega = -\nabla H.$$
	The flow of a family of LCS-Hamiltonian vector fields is called an \textbf{LCS-Hamiltonian isotopy}.
\end{dfn}

\begin{prop}\label{expans-ham}
	LCS Hamiltonian vector fields $X_H$ are LCS vector fields with expansion coefficients $\mu_{X_H} = 0$. In particular, if $\phi_t$ is an LCS-Hamiltonian isotopy, then $(\phi_t,\scr{F}_{\phi_t})$ is an LCS automorphism.
\end{prop}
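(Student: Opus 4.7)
The plan is to verify the defining equation of an LCS vector field directly from the definition of $X_H$, and then invoke Proposition \ref{prop:LCS_vector_fields} to upgrade this to a statement about the flow.

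First, I would compute $d_\nabla(\iota_{X_H}\omega)$ using the defining relation $\iota_{X_H}\omega = -\nabla H$. This gives
$$d_\nabla(\iota_{X_H}\omega) = d_\nabla(-\nabla H) = -d_\nabla^2 H.$$
Since $\nabla$ is flat, we have $d_\nabla^2 = 0$ on $\Omega^{\bullet}(M;E)$, and in particular on the section $H \in \Gamma(E) = \Omega^0(M;E)$. Thus $d_\nabla(\iota_{X_H}\omega) = 0 = 0 \cdot \omega$, which by Definition \ref{dfn:LCS_vf} shows that $X_H$ is an LCS vector field with expansion coefficient $\mu_{X_H} = 0$.

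Next, for an LCS-Hamiltonian isotopy $\phi_t$ generated by a (possibly time-dependent) family of LCS-Hamiltonian vector fields $X_{H_t}$, each $X_{H_t}$ has expansion coefficient $0$ by the computation above. Plugging this into the formula of Proposition \ref{prop:LCS_vector_fields} yields
$$\Phi_t = e^{\int_0^t \mu_{X_{H_\tau}} d\tau} F_{\phi_t} = e^0 \cdot F_{\phi_t} = F_{\phi_t},$$
so $(\phi_t, F_{\phi_t})$ is an LCS automorphism, as claimed.

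There is no substantive obstacle: the entire content is that the flatness of $\nabla$ forces the right-hand side of the LCS vector field equation to vanish, and the rest is a direct appeal to the prior proposition. The only small subtlety worth flagging in the write-up is to clarify that for a time-dependent Hamiltonian $H_t$, the computation applies pointwise in $t$, since $d_\nabla$ acts only in the $M$-directions and commutes with evaluation in $t$.
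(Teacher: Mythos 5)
Your proof is correct and follows essentially the same route as the paper: flatness of $\nabla$ gives $d_\nabla(\iota_{X_H}\omega) = -d_\nabla^2 H = 0$, so $\mu_{X_H}=0$, and the statement about the flow is then Proposition \ref{prop:LCS_vector_fields}. (Your write-up even fixes a small typo in the paper's one-line computation, which writes $-d_\nabla^2\omega$ where $-d_\nabla^2 H$ is meant.)
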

\begin{proof}
	We have $d_\nabla(\iota_{X_H}\omega) = -d_\nabla^2\omega = 0.$ The final statement follows from Proposition \ref{prop:LCS_vector_fields}.
\end{proof}

One particular type of LCS manifold plays particularly nicely with our constructions, and also naturally gives rise to LCS vector fields.

\begin{dfn} \label{dfn:ELCS}
	An \textbf{exact LCS (ELCS) manifold} $(M,E,\nabla,\lambda)$ consists of the data of a smooth manifold $M$, a flat oriented line bundle $\pi \colon (E,\nabla) \rightarrow M$, and an $E$-valued differential $1$-form $\lambda \in \Omega^1(M;E)$ such that the quadruple $(M,E,\nabla,d_\nabla \lambda)$ is LCS. We refer to $\lambda$ as a \textbf{primitive} of the LCS structure determined by $d_\nabla \lambda$. An \textbf{ELCS map} $$(\phi,\Phi) \colon (M_1,E_1,\nabla_1,\lambda_1) \rightarrow (M_2,E_2,\nabla_2,\lambda_2)$$ is a morphism of flat oriented line bundles such that $$\Phi^*\lambda_2 = \lambda_1 + \nabla_1 s$$ for some $s \in \Gamma(E_1)$.
\end{dfn}

\begin{rem}
Observe that since $d_\nabla^2=0$ we have that an ELCS map is automatically an LCS map for the induced LCS structures.
\end{rem}

\begin{dfn}
The \textbf{Liouville vector field} on an ELCS manifold is the vector field $V_{\lambda} \in \Gamma(TM)$ uniquely defined by the equation
$$\iota_{V_{\lambda}}d_\nabla \lambda = \lambda.$$
\end{dfn}

Observe that $V_{\lambda}$ is an LCS vector field with expansion constant $\mu_{V_{\lambda}} = 1$. Hence, the flow of $V_{\lambda}$ generates LCS-symplectomorphisms.

\begin{prop} \label{prop:covering_map}
  Suppose $\pi \colon \wt{X} \rightarrow X$ is a connected covering map, and suppose that we have fixed an LCS structure $(E,\nabla,\omega)$ on $\wt{X}$. Suppose further that the deck group of $\pi$ acts by LCS maps. Then there is an LCS structure on $X$ such that $\pi$ is an LCS map. Furthermore, the LCS structure is unique up to unique isomorphism, in the sense that if there are two such LCS structures $(E_1,\nabla_1,\omega_1)$ and $(E_2,\nabla_2,\omega_2)$ (for which $\pi$ is an LCS map), then the identity on $X$ extends uniquely to an LCS symplectomorphism $(X,E_1,\nabla_1,\omega_1) \rightarrow (X,E_2,\nabla_2,\omega_2)$.
  
  Conversely, if we have a connected covering map $\pi \colon \wt{X} \rightarrow X$ and $X$ has an LCS structure, then there is similarly an essentially unique LCS structure on $\wt{X}$ such that $\pi$ is an LCS map.
\end{prop}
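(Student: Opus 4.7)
The plan is to descend the LCS structure from $\wt{X}$ to $X$ using the deck group action. The key initial observation is that since $\Gamma$ acts on $\wt{X}$ by LCS maps, Remark \ref{rem:base_only} supplies, for each $g \in \Gamma$, a \emph{unique} bundle map $\Phi_g \colon E \to E$ covering $g$ such that $(g,\Phi_g)$ is an LCS automorphism of $(\wt{X}, E, \nabla, \omega)$. By the uniqueness clause, both $\Phi_{gh}$ and $\Phi_g \circ \Phi_h$ are LCS lifts of $gh$, so they coincide; similarly $\Phi_e = \mathrm{Id}_E$. Hence $g \mapsto \Phi_g$ is a genuine left action of $\Gamma$ on $E$ by flat oriented line bundle automorphisms. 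This cocycle verification, which was the only potential sticking point, is essentially free from the uniqueness remark.

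Next, because $\Gamma$ acts freely and properly discontinuously on $\wt{X}$ and the maps $\Phi_g$ are fibrewise linear isomorphisms covering the action, the induced $\Gamma$-action on the total space $E$ is also free and properly discontinuous. I would therefore define the descended line bundle as
\[
\hat{E} := E/\Gamma \longrightarrow \wt{X}/\Gamma = X,
\]
with the canonical quotient map $q \colon E \to \hat{E}$ fibrewise an isomorphism, so $\hat{E}$ is a line bundle over $X$ together with a canonical identification $E \cong \pi^*\hat{E}$. Since each $\Phi_g$ preserves $\nabla$ and the chosen orientation, the connection and orientation descend: I set $\hat{\nabla}$ to be the unique flat connection on $\hat{E}$ with $\pi^*\hat{\nabla} = \nabla$ under the above identification, and I orient $\hat{E}$ so that $q$ is orientation-preserving. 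The condition $\Phi_g^*\omega = \omega$ (part of being an LCS automorphism) means $\omega$ is $\Gamma$-equivariant in the appropriate sense, so it descends uniquely to $\hat{\omega} \in \Omega^2(X;\hat{E})$ with $\pi^*\hat{\omega} = \omega$. Closedness $d_{\hat{\nabla}}\hat{\omega} = 0$ and non-degeneracy of $\iota_\bullet\hat\omega$ are local conditions; since $\pi$ is a local diffeomorphism and their pullbacks are the LCS conditions we already have on $\wt{X}$, both transfer to $X$. The pair $(\pi, q)$ is then the desired LCS map.

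For uniqueness, given two LCS structures $(E_i,\nabla_i,\omega_i)$ on $X$ making $\pi$ an LCS map, each comes equipped with a bundle morphism $\Psi_i \colon E \to E_i$ covering $\pi$, intertwining connections and satisfying $\Psi_i^*\omega_i = \omega$. Then $\Psi_i$ identifies $E$ with $\pi^*E_i$, and the composition $\Psi_2 \circ \Psi_1^{-1}$ is $\Gamma$-equivariant (because both $\Psi_i$'s identify the $\Phi_g$-action on $E$ with the trivial action of $\Gamma$ on $\pi^*E_i$), hence descends to an isomorphism of flat oriented line bundles $\hat{E}_1 \to \hat{E}_2$ over the identity of $X$; that this intertwines $\omega_1$ with $\omega_2$ follows from $\pi^*\omega_i = \omega$ and injectivity of $\pi^*$. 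Uniqueness of the resulting LCS symplectomorphism is again Remark \ref{rem:base_only}.

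The converse direction is handled dually and more easily: given an LCS structure $(E',\nabla',\omega')$ on $X$, simply pull back to obtain $(\pi^*E',\pi^*\nabla',\pi^*\omega')$ on $\wt{X}$, with $\Phi_{\mathrm{can}}$ as the bundle map. Flatness, closedness, and non-degeneracy are all preserved under pullback along a local diffeomorphism, and essential uniqueness of the lifted LCS structure on $\wt X$ follows from Proposition \ref{prop:flat-line-bundle-facts} together with Remark \ref{rem:base_only}, exactly as in the uniqueness argument above.
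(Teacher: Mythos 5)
Your argument is correct, and it takes a genuinely cleaner route than the paper's. Your key move---that the LCS-determined lifts $\Phi_g$ automatically assemble into a genuine $\Gamma$-action on $E$ by the uniqueness of Remark \ref{rem:base_only}, so that $(E,\nabla,\omega)$ descends on the nose to $E/\Gamma$---is exactly what lets you avoid any correction terms. The paper instead first descends the flat bundle via an essentially arbitrary identification $\Psi \in \mathrm{Iso}(\pi^*(\overline{E},\overline{\nabla}),(E,\nabla))$; with respect to the resulting naive deck action $\mathcal{I}_g = \pi^*\mathrm{Id}$, the form $\Psi^*\omega$ is invariant only up to constants $e^{c_g}$ coming from a homomorphism $\rho\colon\Gamma\to\R$, and the authors must then rescale by $e^{-f}$ (where $\pi^*\theta = df$ for the class $\theta\in H^1(X;\R)$ induced by $\rho$) and shift the descended connection to $\overline{\nabla}_\theta = \overline{\nabla}+\theta$ before the form descends. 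Your choice of descent is precisely the one for which $\rho\equiv 0$, so all of these corrections vanish; the two constructions produce isomorphic structures, as they must by the uniqueness clause. One small point worth spelling out in your uniqueness step: the claim that each $\Psi_i$ intertwines the $\Phi_g$-action on $E$ with the canonical action on $\pi^*E_i$ amounts to the identity $\Psi_i\circ\Phi_g=\Psi_i$, which holds because both sides are LCS lifts over the same base map $\pi\circ\phi_g=\pi$---one more application of Remark \ref{rem:base_only}. The converse direction matches the paper's.
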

\begin{proof}
	For the forward direction, notice that since the deck group action on $\wt{X}$ extends to a flat oriented line bundle isomorphism of $(E,\nabla)$, we have that one may construct a flat oriented line bundle $(\overline{E},\overline{\nabla})$ on $X$ together with an isomorphism of flat oriented line bundles $\Psi \in \mathrm{Iso}(\pi^*(\overline{E},\overline{\nabla}),(E,\nabla))$.
	Suppose $\Gamma$ is the deck group of $\pi$.  Let $\phi_g\in\Gamma$, and let $(\phi_g,\Phi_g)$ denote the corresponding LCS map of $\wt{X}$. Let
	$$\Psi_g := \Psi^{-1}\Phi_g\Psi \in \mathrm{Iso}_{\phi_g}(\pi^*(\overline{E},\overline{\nabla}),\pi^*(\overline{E},\overline{\nabla})).$$
Then
	$$\Psi_g^*(\Psi^*\omega) = \Psi^*\omega.$$
	But also notice by Proposition \ref{prop:restrict_torsor} \color{black} that there is another flat oriented line bundle automorphism
	$$\mathcal{I}_g := \pi^*\mathrm{Id} \in \mathrm{Iso}_{\phi_g}(\pi^*(\overline{E},\overline{\nabla}),\pi^*(\overline{E},\overline{\nabla})).$$
	We have by Proposition \ref{prop:flat-line-bundle-facts} that $\Psi_g = e^{c_g}\mathcal{I}_g$. Furthermore, it is easy to check that the constants $c_g$ are given as $c_g = \rho(g)$ for a group homomorphism $\Gamma \xrightarrow{\rho} \R$.
	
	The composition $\pi_1(X) \rightarrow \Gamma \xrightarrow{\rho} \R$ yields a first cohomology class $\theta \in H^1(X;\R)$ such that $\pi^*\theta = df$ for some $f \in C^{\infty}(\wt{X})$, satisfying $f\circ \phi_g = f+c_g$. We therefore have that
	$$\scr{I}_g^*\left(e^{-f}\Psi^*\omega\right) = e^{-f}\Psi^*\omega,$$
	and hence $e^{-f}\Psi^*\omega = \pi^*\overline{\omega}$ for some $\overline{\omega} \in \Omega^2(X;\overline{E})$. Furthermore, if we take
	$$\overline{\nabla}_\theta :=\nabla + \theta,$$
	i.e. such that $\overline{\nabla}_{\theta}s = \overline{\nabla}s + \theta \otimes s$, then we obtain a chain of LCS maps
	$$\xymatrix{(E,\nabla,\omega) \ar[r]^-{\Psi^{-1}} \ar[d] & (\pi^*\overline{E},\pi^*\overline{\nabla},\Psi^*\omega) \ar[r]^-{e^{-f}} \ar[d] & (\pi^*\overline{E},\pi^*\overline{\nabla}_{\theta},e^{-f}\Psi^*\omega) \ar[r]^-{\Pi_{\mathrm{can}}} \ar[d] & (\overline{E},\overline{\nabla}_{\theta}, \overline{\omega}) \ar[d] \\ \wt{X} \ar@{=}[r] & \wt{X} \ar@{=}[r] & \wt{X} \ar[r]^{\pi} & X}$$
	We see that the base map is just $\pi$, and this hence completes our construction.
	
	Suppose now we have two such quotients $(E_1,\nabla_1,\omega_1)$ and $(E_2,\nabla_2,\omega_2)$ on $X$. For each open set $U \subset X$ evenly covered by $\pi$, we may choose a lift $V \subset \wt{X}$ so that $\pi|_V \colon V \rightarrow U$ is a diffeomorphism. Then we construct an LCS isomorphism locally over $U$ by restricting the LCS map $\pi$ to $V$. That is, we have
	$$\xymatrix{(E_1,\nabla_1,\omega_1)|_U \ar[d] & (E,\nabla,\omega)|_V \ar[r]^-{\sim} \ar[l]_-{\sim} \ar[d] & (E_2,\nabla_2,\omega_2)|_U \ar[d] \\ U & V \ar[l]_{\pi|_V} \ar[r]^{\pi|_V} & U}$$
	where may view the top row as defining an LCS map over the identity in $U$ from $(E_1,\nabla_1,\omega_1)$ to $(E_2,\nabla_2,\omega_2)$. We note that this construction was independent upon the choice of lift $V$, and hence extends to an LCS symplectomorphism over all of $X$. Uniqueness has already been discussed: any LCS symplectomorphism is determined by its base map, in this case the identity on $X$.
	
	As for the converse, we see that if $(E,\nabla,\omega)$ is an LCS structure on $X$, then $(\pi^*E,\pi^*\nabla,\pi^*\omega)$ is an LCS structure on $\wt{X}$ with $\pi \colon \wt{X} \rightarrow X$ inducing an LCS map between them. Uniqueness up to unique isomorphism is similar to the above, but where instead we use the diagram of LCS maps
	$$\xymatrix{(E_1,\nabla_1,\omega_1) \ar[d] \ar[r]^-{\sim} & (E,\nabla,\omega) \ar[d] & (E_2,\nabla_2,\omega_2) \ar[d] \ar[l]_-{\sim} \\ \wt{X} \ar[r]^{\pi} & X & \wt{X} \ar[l]_{\pi}}.$$
	
\end{proof}

\color{black}

There is a further refinement to the case of ELCS manifolds. We will say that a primitive $\lambda$ of an LCS manifold $(M,E,\nabla,\omega)$ is \textbf{equivariant} under an LCS automorphism $(\phi,\Phi)$ if $\Phi^*\lambda = \lambda$. (We say \emph{equivariant} instead of \emph{invariant} because implicitly there are some constant scaling factors encoded in $\Phi$.)
\begin{prop}\label{prop:covering_map_exact}
Let $\pi \colon \wt{X} \rightarrow X$ be a connected covering map. Fix an ELCS structure $(E,\nabla,\lambda)$ on $\wt{X}$. If the deck group acts by LCS maps for which $\lambda$ is equivariant, then there is an induced ELCS structure induced on $X$, unique up to unique ELCS isomorphism, for which $\pi$ is an ELCS map. Conversely, an ELCS on $X$ lifts to a unique ELCS structure up to unique isomorphism on $\wt{X}$ so that $\pi$ is an ELCS map, and the primitive is equivariant.
	
\end{prop}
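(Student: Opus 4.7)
The plan is to run the construction of Proposition \ref{prop:covering_map} while carrying along the primitive $\lambda$, with the equivariance hypothesis $\Phi_g^*\lambda = \lambda$ arranged precisely so that $\lambda$ descends in parallel with $\omega$. First, apply Proposition \ref{prop:covering_map} to the underlying LCS data, producing a flat oriented line bundle $(\overline{E}, \overline{\nabla}_\theta)$ and an $\overline{E}$-valued two-form $\overline{\omega}$ on $X$ for which $\pi$ is an LCS map. Let $\Pi : E \to \overline{E}$ denote the composite bundle map over $\pi$ coming from the chain of LCS maps in that proof, so that $\Pi^*\overline{\omega} = \omega$. The remaining content of the proposition then reduces to producing $\overline{\lambda} \in \Omega^1(X; \overline{E})$ with $\Pi^*\overline{\lambda} = \lambda$; once this is in hand, applying $\Pi^*$ and using $\Pi^*(d_{\overline{\nabla}_\theta}\overline{\lambda}) = d_\nabla(\Pi^*\overline{\lambda}) = d_\nabla\lambda = \omega = \Pi^*\overline{\omega}$ together with injectivity of $\Pi^*$ (since $\pi$ is a surjective local diffeomorphism and $\Pi$ is a fiberwise isomorphism) forces $d_{\overline{\nabla}_\theta}\overline{\lambda} = \overline{\omega}$, so that $(X, \overline{E}, \overline{\nabla}_\theta, \overline{\lambda})$ is ELCS and $\pi$ becomes an ELCS map with $s = 0$ in Definition \ref{dfn:ELCS}.

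To produce $\overline{\lambda}$, I would retain the auxiliary data from the proof of Proposition \ref{prop:covering_map}: the isomorphism $\Psi$, the cocycle $c_g$ with $\Psi_g := \Psi^{-1}\Phi_g \Psi = e^{c_g}\mathcal{I}_g$, and the function $f \in C^\infty(\wt{X})$ satisfying $f \circ \phi_g = f + c_g$. The equivariance hypothesis $\Phi_g^*\lambda = \lambda$ immediately gives $\Psi_g^*(\Psi^*\lambda) = \Psi^*\Phi_g^*\lambda = \Psi^*\lambda$, which upon unwinding $\Psi_g^* = e^{-c_g}\mathcal{I}_g^*$ becomes $\mathcal{I}_g^*(\Psi^*\lambda) = e^{c_g}\Psi^*\lambda$. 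Combined with $\mathcal{I}_g^*(e^{-f}) = e^{-f - c_g}$, this proves that $e^{-f}\Psi^*\lambda$ is $\Gamma$-invariant and hence equals $\pi^*\overline{\lambda}$ for a unique $\overline{\lambda} \in \Omega^1(X; \overline{E})$. Chasing through the chain of bundle maps exactly as for $\omega$ then identifies $\Pi^*\overline{\lambda} = \lambda$.

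For uniqueness and the converse I would imitate the end of the proof of Proposition \ref{prop:covering_map}: two ELCS descents already produce the same LCS structure up to a unique LCS symplectomorphism over $\mathrm{id}_X$, and a local computation in an evenly covered chart shows that both primitives pull back to $\lambda$ on the common lift, so this identifying map is automatically ELCS. Conversely, an ELCS structure $(\overline{E}, \overline{\nabla}, \overline{\lambda})$ on $X$ pulls back to $(\pi^*\overline{E}, \pi^*\overline{\nabla}, \pi^*\overline{\lambda})$ on $\wt{X}$, producing an ELCS structure with $\pi$ an ELCS map, and for each deck transformation $\Phi_g^*(\pi^*\overline{\lambda}) = (\pi \circ \phi_g)^*\overline{\lambda} = \pi^*\overline{\lambda}$, so equivariance comes for free. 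The principal subtle point to verify is that the $f$ and $c_g$ chosen in descending $\omega$ are automatically the correct data to descend $\lambda$; this is handled by observing that $\Psi_g$, $c_g$, and $f$ depend only on the flat line bundle and the deck action on $(E,\nabla)$, not on $\omega$ or $\lambda$, so the two transformation laws arise in parallel from the assertions $\Phi_g^*\omega = \omega$ and $\Phi_g^*\lambda = \lambda$.
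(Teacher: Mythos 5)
Your proposal is correct and follows exactly the route the paper takes: the published proof simply says it is ``essentially the same as in Proposition \ref{prop:covering_map}, but where we replace $\omega$ everywhere with $\lambda$,'' and your write-up is a faithful, detailed execution of that replacement. In particular, your key observation that $\Psi_g$, $c_g$, and $f$ depend only on the flat bundle data and the deck action, so that the equivariance $\Phi_g^*\lambda=\lambda$ descends $\lambda$ in parallel with $\omega$, is precisely the point the paper leaves implicit.
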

\begin{proof}
	The proof is essentially the same as in Proposition \ref{prop:covering_map}, but where we replace $\omega$ everywhere with $\lambda$.
\end{proof}

We end this section with two examples of LCS manifolds that will be relevant for our discussion of products.

\begin{ex}\label{excotan}
	Suppose $Q$ is a manifold and $(E,\nabla)$ is a flat oriented line bundle over it. Then we may form an ELCS manifold $T^*_{(E,\nabla)}Q$, called the \textbf{twisted cotangent bundle}, as follows. The underlying manifold is the total space of the bundle $$\pi \colon T^*Q \otimes E \rightarrow Q,$$ and it comes with the flat line bundle $(\pi^*E,\pi^*\nabla)$. Just like the usual cotangent bundle, there is a canonical $1$-form $\lambda^E \in \Omega^1(T^*Q \otimes E; \pi^*E)$, such that its value at $p \otimes s \in T^*_qQ \otimes E_q$ is given by $\pi^*p \otimes \pi^*s$. That is, if $X \in T_{p \otimes s}(T^*Q \otimes E))$, then
	$$\lambda^E(X) = p(d\pi(X)) \cdot \pi^*s \in (\pi^*E)_{p \otimes s}$$
	where $\pi^*s$ refers to the image of $s$ under the identification $E_q \cong (\pi^*E)_{(q,p \otimes s)}$. The form $\lambda^E$ may be called the \textbf{tautological $1$-form}, since if we have any $E$-valued $1$-form $\alpha \in \Omega^1(Q;E)$, it corresponds to a section $\wt{\alpha} \in \Gamma(Q;T^*Q \otimes E)$, and it satisfies
	$$\wt{\alpha}^*\lambda^E = \alpha.$$
	The differential $\omega := -d_{\pi^*\nabla} \lambda^E$ is non-degenerate. Indeed, the construction is natural with respect to morphisms of flat line bundles, and so since any flat line bundle admits a local trivialisation, our construction matches the usual symplectic form on cotangent bundles. In this manner, we have defined an ELCS structure. The Liouville vector field is the radial vector field tangent to each fiber $T_q^*Q \otimes E_q$. Hamiltonian dynamics on such twisted cotangent bundles was studied in \cite{zbMATH07103007}.
	
	The minimal trivializing cover of the twisted cotangent bundle $T^*Q \otimes E$ is $T^*\wt{Q} \otimes \pi^*E$, where $\pi \colon \wt{Q} \rightarrow Q$ is the minimal trivializing cover for the flat line bundle $(E,\nabla)$, say with deck group $\Gamma$. In particular, an identification of $(\pi^*E,d_{\pi^*\nabla}) \cong (\underline{\R},d)$ identifies this cover with $T^*\wt{Q}$, such that the LCS structure of Proposition \ref{prop:covering_map} is the usual canonical symplectic form up to global scale. With this identification, the deck group $\Gamma$ acts on $(q,p) \in T^*\wt{Q}$ (with $q \in \wt{Q}$ and $p \in T^*_q\wt{Q}$) by
	$$g \cdot (q,p) = (g \cdot q, e^{\rho(g)}p)$$
	where $\rho \colon \Gamma \rightarrow \R$ is the morphism which composes with the projection $\pi_1(Q) \rightarrow \Gamma$ to give the morphism $\ln(\mathrm{hol}(\nabla)) \colon \pi_1(Q) \rightarrow \R$. This gives an alternative definition of $T^*_{(E,\nabla)}Q$ as the quotient $T^*\wt{Q}/\Gamma$ as in Proposition \ref{prop:covering_map}.
	
	Finally, we have a gauge theoretic interpretation. Suppose we fix a trivialisation of the oriented line bundle $E$, yielding an identification $\Phi \colon (E,d_\nabla) \xrightarrow{\sim} (\underline{\R},d+\eta \wedge)$ for a closed $1$-form $\eta$. Then we have an identification
	$$(T^*Q \otimes E,\pi^*E, \pi^*\nabla,\lambda^E) \cong (T^*Q,\underline{\R},d+\pi^*\eta, \lambda^{\mathrm{can}})$$
	where $\lambda^{\mathrm{can}}$ is the standard tautological $1$-form on $T^*Q$.	In particular, we may write our ELCS manifold in the gauge-theoretic way as
	$$(T^*Q,\pi^*\eta, \omega = d_{\pi^*\eta}\lambda^{\mathrm{can}})$$
	where we note that $\omega = \omega^{\mathrm{can}} + \pi^*\eta \wedge \lambda^{\mathrm{can}}$ is not just the usual canonical symplectic structure. (In fact, $d\omega \neq 0$ in general.)

    In the standard symplectic setting, diffeomorphisms of $Q$ lift to exact symplectomorphisms of $T^*Q$. We describe now an analogous version for the ELCS structure on $T^*_{(E,\nabla)}Q$. Suppose that $\phi$ is a diffeomorphism of $Q$ which is \textbf{holonomy preserving}, meaning that
        $$\mathrm{hol}(\phi^*\nabla) = \phi^*(\mathrm{hol}(\nabla)) = \mathrm{hol}(\nabla).$$
        By Proposition \ref{prop:flat-line-bundle-facts}, we may extend $\phi$ to an oriented flat line bundle isomorphism $\Phi \colon (E,\nabla) \rightarrow (E,\nabla)$ lying over $\phi$, where $\Phi$ is determined up to multiplication by a positive constant.
        
        Recall that for each $q \in M$, we have an isomorphism
        $$\Phi_q^* \colon T^*_{\phi(q)}Q \otimes E_{\phi(q)} \rightarrow T^*_{q}Q \otimes E_q.$$
        Since $\phi$ is a diffemorphism, we may glue these together to obtain a diffeomorphism
        $$\wt{\phi} = (\Phi^*)^{-1} \colon T^*_{(E,\nabla)}Q \rightarrow T^*_{(E,\nabla)}Q$$
        Explicitly, any point in $T^*_{(E,\nabla)}Q$ may be written as $(q, p \otimes s)$, where $q \in Q$, $p \in T^*_qQ$, and $s \in E_q$. In this case,
        $$\wt{\phi}(q,p \otimes s) := (\phi(q), (\phi^{-1})^*p \otimes \Phi_q(s))$$
        (where $\Phi_q \colon E_q \xrightarrow{\sim} E_{\phi(q)}$).

        We claim that $\wt{\phi}$ is the base of an ELCS map for which $\lambda^E$ is equivariant. In fact, we claim that the ELCS map is given at the level of bundles by
        $$\wt{\Phi} := \pi^*\Phi \in \mathrm{Iso}_{\wt{\phi}}(\pi^*(E,\nabla),\pi^*(E,\nabla)).$$
        It suffices to check that $\wt{\Phi}^*\lambda^E = \lambda^E$. If $V \in T_{\wt{\phi}(q,p \otimes s)}T^*_{(E,\nabla)}M$, then by definition,
        $$\lambda^E(V) = ((\phi^{-1})^*p)(\pi_*V) \cdot \pi^*\Phi_q(s).$$
        Hence, for $W \in T_{\wt{\phi}(q,p\otimes s)}$, we have
        \begin{align*}
        	(\wt{\Phi}^*\lambda^E)_{(q,p \otimes s)}(W) &= ((\phi^{-1})^*p)(\pi_* \wt{\phi}_*W) \cdot \wt{\Phi}^*\pi^*\Phi_q(s) \\
        	&= p((\phi^{-1} \circ \pi \circ \wt{\phi})_* W)\cdot \pi^*\Phi^*\Phi_q(s) \\
        	&= p(\pi_*W) \cdot \pi^*s\\
        	&= \lambda^E_{(q,p \otimes s)}(W).
        \end{align*}
        Thus, $\wt{\Phi}^*\lambda^E = \lambda^E$.

        From the gauged perspective, with $T_{(E,\nabla)}Q\simeq (T^*Q,\pi^*\eta, \lambda^{\mathrm{can}})$ for a closed one form $\eta$ on $Q$ representing the cohomology class $[\eta] = \ln\mathrm{hol}(\nabla) \in H^1(Q;\R)$, the holonomy-preserving condition reads as $\phi^*\eta=\eta+df$ for some function $f$ determined up to addition by a constant. We obtain that $\tilde{\phi}$ is given by:

        $$\wt{\phi}(q,p) = (\phi(q),e^{-f}(\phi^*)^{-1}_qp).$$

      \end{ex}

\begin{ex}\label{excontact}
We consider the following perspective on \textbf{contact manifolds}. Let $\xi$ be a co-oriented hyperplane distribution on a manifold $M$ and let $E_\xi$ be the oriented line bundle $TM/\xi$ together with the canonical form $\alpha\in \Omega^1(M;E_\xi)$ defined by $\alpha(X)=[X]$.  Then $\xi$ is a contact structure if and only if for any connection $\nabla$ on $E_\xi$ we have that $$d_\nabla\alpha|_{\xi}:\Lambda^2\xi\rightarrow E_\xi$$ is non-degenerate. (It is sufficient to check for one connection only.)

 	Let us now fix a flat connection $\nabla$ on $E_{\xi}$. Consider $$S^+(M,\xi)=\{(q,e^t \cdot \alpha_q)| t\in \R\}\subset T^*M\otimes E.$$ This an LCS submanifold of $T_{(E,\nabla)}^*M$ (i.e. the restriction of the LCS structure is still LCS), since the result is true in the standard setting for $(E,\nabla) \cong (\underline{\R},d)$, and the LCS condition is local. Let us write $S^+(M,\xi;\nabla)$ for the smooth manifold $S^+(M,\xi)$ together with the LCS structure determined by $\nabla$, i.e. so that we may think of
 	$$S^+(M,\xi;\nabla) \subset T^*_{(E,\nabla)}M$$
 	as an LCS embedding. In fact, it is an ELCS embedding, since we may restrict the primitive $\lambda^E$.
 	
 	Recall in Example \ref{excotan} that for any holonomy preserving diffeomorphism $\phi$ of $M$, we may lift it to an LCS automorphism $\wt{\phi}$ of $T^*_{(E,\nabla)}M$ for which $\lambda^E$ is equivariant, where $\wt{\phi}$ is determined up to radially scaling in the fiber directions by a positive constant factor. In the case that we have $\phi = \mathrm{Id}$, then the lifts $\wt{\phi} = \wt{\mathrm{Id}}$ are given by scaling radially fibrewise. That is, we obtain a natural free $\R^+$-action on $T^*_{(E,\nabla)}M$ for which $\lambda^E$ is equivariant. Furthermore, $S^+(M,\xi)$ is preserved by this action. For any $T > 0$, we may quotient by the discrete subgroup $\Z \leq \R^+$ generated multiplicative by $e^T$, yielding by Proposition \ref{prop:covering_map_exact} an ELCS structure on
 	$$S^+(M,\xi) \rightarrow S^+(M,\xi)/\langle e^T \rangle =: S_T^+(M,\xi).$$
  We refer to the ELCS structrue as $S_T^+(M,\xi,\nabla)$, and refer to it as the \textbf{twisted conformal symplectisation} of $(M,\xi,\nabla)$.
  
  When $(E,\nabla)$ is trivial as a flat oriented line bundle (equivalently $\ln(\mathrm{hol}(\nabla)) = 0 \in H^1(M;\R)$), we recover the standard LC-symplectisation as discussed in \cite{zbMATH06843728} or \cite{zbMATH07103007}. When the holonomy is non trivial then this recovers the definition of twisted conformal symplectisation from \cite[Section 2.4]{allais2022dynamics}, where the Hamiltonian dynamics of these manifolds and its relation with contact dynamics is extensively studied.

	We may further generalise the twisted conformal symplectisation as follows. A \textbf{contactomorphism} between $(M_1,\xi_1)$ and $(M_2,\xi_2)$ is a diffeomorphism $\phi$ that is covered by a bundle map $\Psi \colon E_{\xi_1} \rightarrow E_{\xi_2}$ such that $\Psi^*\alpha_2=\alpha_1$. (Just like for LCS maps, if we know $\phi$ is covered by such a $\Psi$, then $\Psi$ is uniquely determined, c.f. Remark \ref{rem:base_only}.)
	
	Suppose $\phi$ is a holonomy-preserving contact automorphism of a compact contact manifold $(M,\xi)$. Notice that $\phi$ is covered by some $\Phi$, well-defined up to a positive constant, so that $\Phi^*\nabla = \nabla$. Note that $\Phi \neq \Psi$ where $\Psi$ is the bundle map with $\Psi^*\alpha = \alpha$. In general, $\Phi^*\alpha = e^{-T}\alpha$ for some $T \in C^{\infty}(M)$. Take the corresponding lift $\wt{\phi}$, which we remind the reader is given by
	$$\wt{\phi} = (\Phi^*)^{-1} \colon T^*M \otimes E \rightarrow T^*M \otimes E.$$
	But $\Phi^*$ clearly preserves $S^+(M,\xi)$, since $\Phi^*\alpha = e^{-T}\alpha$ for some $T \in C^{\infty}(M)$ and $S^+(M,\xi)$ is invariant under multiplication by positive constants.
	
	If we replace $\Phi$ with some $e^c \cdot \Phi$ for $c$ large enough, then the function $T$ increases by $c$. By choosing $c$ large enough so that $T > 0$ everywhere, the corresponding $\wt{\phi}$ generates a free action. Hence, we obtain via Proposition \ref{prop:covering_map_exact} an ELCS quotient
	$$S_T^+(M,\xi,\nabla;\phi) := S^+(M,\xi,\nabla)/\Z.$$
	This ELCS manifold is called \textbf{suspension of $(M,\xi,\nabla)$ with respect to $\phi$}. If $\phi = \mathrm{id}$ (which is holonomy-preserving with respect to any $\nabla$), we recover the twisted conformal symplectisation. 

	In the gauged perspective in which we have trivialized $(E_{\xi},\nabla) \cong (\underline{\R},d+\eta)$, we obtain a genuine contact from $\alpha \in \Omega^1(M)$ (with kernel $\xi$ such that $\alpha \colon TM \rightarrow \R$ factors through our trivialisation $TM/\xi = E_{\xi} \cong \underline{\R}$). The holonomy preserving condition is then $\phi^*\eta=\eta+df$ for some function $f \in C^{\infty}(M)$, and the fact that it is a contactomorphism reads as $\phi^*\alpha=e^g\alpha$. The contact form $\alpha$ gives an identification of $S^+(M,\xi)$ with $M\times \mathbb{R}$. To be specific, the ELCS structure is given in the gauged perspective by the triple
	$$(M \times \R, \pi^*\eta, e^t\cdot \alpha)$$
	Under this identification, the action of $\tilde{\phi}$ is given by
  $$\wt{\phi}(q,t) = (\phi(q),t+f(q)-g(q)+C)$$
  	for some constant $C$ (which may be absorbed into the function $f$). Indeed choosing $C$ big enough so that the image of $\{(q,0)\}$ does not intersect itself we obtain that this generate a free action. Diffeomorphically the quotient by this action is the mapping torus of $\phi$ but this indentification allows us to write the LCS form explicitly only when $\phi$ is a \emph{strict} contactomorphism (which means that $f-g$ is a constant function).

\end{ex}

\section{Products of LCS manifolds}
\label{sec:prod-LCS}

We apply now the construction from Section \ref{sec:twisted-product-flat} in the context of LCS manifolds and prove Theorem \ref{thm:main_LCS}.

\begin{dfn}\label{dfn:LCS_product}
Let $\scr{M}_1 = (M_1,E_1,,\nabla_1,\omega_1)$ and $\scr{M}_2 = (M_2,E_2,\nabla_2,\omega_2)$ be two LCS manifolds. Then for any isomorphism $$\Phi \colon (r_2^*E_2,r_2^*\nabla_2) \xrightarrow{\sim} (r_1^*E_1,r_1^*\nabla_1),$$ the \textbf{($\Phi$)-twisted product} of $\scr{M}_1$ and $\scr{M}_2$ is the LCS manifold
$$\scr{M}_1 \boxplus_{\Phi} \scr{M}_2 := (M_1\boxplus M_2,r_1^*E_1,r_1^*\nabla_1,\omega_1 \boxplus_\Phi (-\omega_2)).$$
\end{dfn}

For completeness, it is worth noting that the form $\omega_1 \boxplus_\Phi (-\omega_2)$ is indeed an LCS form. It is non-degenerate by pointwise linear algebra, and it is closed (in $\Omega^{2}(M_1 \boxplus M_2;r_1^*E_1),r_1^*\nabla_1)$) because $\boxplus_\Phi$ is a chain map.

This construction is natural with respect to LCS maps, as stated bellow:

\begin{thm}\label{thm:comp_property} 
Let $\scr{M}_1 := (M_1,E_1,\nabla_1,\omega_1)$, $\scr{M}'_1 := (M'_1,E'_1,\nabla'_1,\omega'_1)$, $\scr{M}_2 := (M_2,E_2,\nabla_2,\omega_2)$ and $\scr{M}'_2 := (M'_2,E'_2,\nabla'_2,\omega'_2)$ be LCS manifolds and let $(\phi_i,\Phi_i): \scr{M}_i \rightarrow \scr{M}'_i$ for $i=1,2$ be a pair of LCS maps. Then for every $\Psi \in \mathrm{Iso}(r_2^*(E_2,\nabla_2),r_1^*(E_1,\nabla_1))$, there is an LCS map
$$(\overline{\phi},\overline{\Phi}_1) \colon \scr{M}_1\boxplus_\Psi \scr{M}_2\rightarrow \scr{M}_1'\boxplus_{\overline{\Phi}_1 \circ \Psi\circ \overline{\Phi}_2^{-1}} \scr{M}'_2,$$
given uniquely up to deck transformation by the property that it covers the map $\phi_1\times \phi_2:M_1\times M_2\rightarrow M_1'\times M_2'$, and where
$$\overline{\Phi}_i := (r_i,r'_i)^*\Phi_i \in \mathrm{Iso}_{\overline{\phi}}(r_i^*(E_i,\nabla_i),(r'_i)^*(E'_i,\nabla'_i)).$$
(See Proposition \ref{prop:restrict_torsor} for the meaning of this notation.) Furthermore, if $\phi_1^t$ and $\phi_2^t$ are Hamiltonian families of such maps, then one may choose a canonical Hamiltonian family lifting $\phi_1^t \times \phi_2^t$.
  \end{thm}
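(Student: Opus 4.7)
First, the plan is to produce $\overline{\phi}$ lifting $\phi_1 \times \phi_2$. Each LCS map is holonomy-preserving, so $\phi_i^*\mathfrak{l}'_i = \mathfrak{l}_i$, and combined with the minimality of $r'$ from Theorem~\ref{thm:main_smooth}(1), covering space theory gives a lift $\overline{\phi}$, unique up to deck transformation, by the same argument as in the proof of Theorem~\ref{thm:main_smooth}(2). The bundle maps are then defined via Proposition~\ref{prop:restrict_torsor} applied to the commutative squares formed by $r_i$, $r'_i$, $\phi_i$, $\overline{\phi}$: set $\overline{\Phi}_i := (r_i, r'_i)^*\Phi_i$. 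Viewing each $\overline{\Phi}_i$ as a bundle isomorphism over $M_1 \boxplus M_2$, the composition $\overline{\Phi}_1 \circ \Psi \circ \overline{\Phi}_2^{-1}$ lives in $\mathrm{Iso}(\overline{\phi}^*(r'_2)^*E'_2, \overline{\phi}^*(r'_1)^*E'_1)$, and a second application of Proposition~\ref{prop:restrict_torsor} identifies this canonically with an element $\Psi' \in \mathrm{Iso}((r'_2)^*E'_2, (r'_1)^*E'_1)$ over $M'_1 \boxplus M'_2$; this $\Psi'$ is the precise meaning of the subscripted twisted product on the right-hand side.

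The main computation is to verify $\overline{\Phi}_1^*\bigl(\omega'_1 \boxplus_{\Psi'}(-\omega'_2)\bigr) = \omega_1 \boxplus_\Psi(-\omega_2)$, split into two summands. For the first, naturality of pullback together with the LCS condition $\Phi_1^*\omega'_1 = \omega_1$ yields $\overline{\Phi}_1^*(r'_1)^*\omega'_1 = r_1^*\omega_1$. For the cross term $\overline{\Phi}_1^*\bigl[\Psi'((r'_2)^*(-\omega'_2))\bigr]$, the analogous naturality for $\Phi_2$ gives $\overline{\phi}^*(r'_2)^*(-\omega'_2) = \overline{\Phi}_2(r_2^*(-\omega_2))$; substituting this and invoking the defining relation $\overline{\phi}^*\Psi' = \overline{\Phi}_1 \circ \Psi \circ \overline{\Phi}_2^{-1}$, the pair $\overline{\Phi}_2^{-1}\circ \overline{\Phi}_2$ cancels together with $\overline{\Phi}_1^{-1}\circ\overline{\Phi}_1$, leaving exactly $\Psi(r_2^*(-\omega_2))$. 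This exhibits $(\overline{\phi}, \overline{\Phi}_1)$ as an LCS map, and uniqueness up to deck transformation is inherited from the base.

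For the Hamiltonian refinement, suppose $\scr{M}_i = \scr{M}'_i$ and each $\phi_i^t$ is LCS-Hamiltonian. Theorem~\ref{thm:main_smooth}(3) furnishes a canonical base lift $\overline{\phi}^t$ starting at the identity, while Propositions~\ref{prop:LCS_vector_fields} and \ref{expans-ham} force $\Phi_i^t = \scr{F}_{\phi_i^t}$ (parallel transport, since the expansion coefficients vanish). The key observation is that $\overline{\phi}^{t*}\Psi = \overline{\Phi}_1^t \circ \Psi \circ (\overline{\Phi}_2^t)^{-1}$, which reduces to the general fact that morphisms of flat line bundles intertwine parallel transport along paths; consequently $\Psi'_t = \Psi$ for all $t$, and the family genuinely acts by automorphisms of the single LCS manifold $\scr{M}_1 \boxplus_\Psi \scr{M}_2$. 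Hamiltonian-ness is then a local check: the expansion coefficient is a local invariant, and in local trivializations the construction specializes to the standard symplectic product of Hamiltonian isotopies. The main obstacle throughout is bookkeeping the bundle-map conventions---whether a map is regarded as covering $\overline{\phi}$ or as residing over $M_1 \boxplus M_2$ via pullback; once Proposition~\ref{prop:restrict_torsor} is applied uniformly to pass between the two perspectives, every remaining step reduces to a formal manipulation combined with the LCS conditions $\Phi_i^*\omega'_i = \omega_i$.
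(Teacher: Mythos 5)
Your construction of the lift $\overline{\phi}$ and the verification that $(\overline{\phi},\overline{\Phi}_1)$ intertwines the product forms follow the paper's proof essentially step for step: the same appeal to $\phi_i^*\mathfrak{l}'_i=\mathfrak{l}_i$ and the lifting criterion, the same definition $\overline{\Phi}_i=(r_i,r'_i)^*\Phi_i$, and the same two-term computation using $\Phi_i^*\omega'_i=\omega_i$. Your explicit descent of $\overline{\Phi}_1\circ\Psi\circ\overline{\Phi}_2^{-1}$ to an element $\Psi'\in\mathrm{Iso}((r'_2)^*E'_2,(r'_1)^*E'_1)$ over $M'_1\boxplus M'_2$, via the invertibility of the torsor map in Proposition \ref{prop:restrict_torsor}, is a welcome clarification of a point the paper leaves implicit.

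The gap is in the Hamiltonian refinement. You correctly observe (via $\Psi$ intertwining parallel transport) that $\Psi'_t=\Psi$, so $\overline{\phi}^t$ is a family of LCS automorphisms of the fixed structure $\scr{M}_1\boxplus_\Psi\scr{M}_2$ with vanishing expansion coefficient. But you then assert that ``Hamiltonian-ness is a local check,'' and it is not: zero expansion means $d_{r_1^*\nabla_1}(\iota_{Z_t}\Omega)=0$, i.e.\ $\iota_{Z_t}\Omega$ is \emph{closed} in the twisted de Rham complex, whereas Hamiltonian means $\iota_{Z_t}\Omega=-(r_1^*\nabla_1)K_t$ for a \emph{global} section $K_t$, i.e.\ exactness. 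Locally every closed twisted $1$-form is exact, so a local trivialization argument can only recover the zero-expansion statement you already have; it cannot detect global exactness, and the distinction is genuine in LCS geometry. The fix is the short computation the paper performs: with $Z_t$ the generator of $\overline{\phi}^t$, one has $(r_1)_*Z_t=X_t$ and $(r_2)_*Z_t=Y_t$, whence
$$\iota_{Z_t}\bigl(\omega_1\boxplus_\Psi(-\omega_2)\bigr)=r_1^*(\iota_{X_t}\omega_1)-\Psi\bigl(r_2^*(\iota_{Y_t}\omega_2)\bigr)=-(r_1^*\nabla_1)\bigl(r_1^*G_t-\Psi(r_2^*H_t)\bigr),$$
using that $\Psi$ commutes with the flat connections. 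This exhibits the explicit global Hamiltonian $r_1^*G_t-\Psi(r_2^*H_t)$ and is what the theorem actually requires.
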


  \begin{proof} Since both maps $\phi_i$ are bases of LCS map we have that $\phi_i^*(\operatorname{hol}\nabla'_i)=\operatorname{hol}\nabla_i$ and thus by the definition of $M_1'\boxplus M_2'$ the map $(r_1,r_2)\circ (\phi_1\times \phi_2)$ lifts to a map $\overline{\phi}: M_1\boxplus M_2\rightarrow M_1'\boxplus M_2'$ uniquely up to deck transformations (which covers the map $\phi_1\times \phi_2$). We claim that $(\overline{\phi},\overline{\Phi}_1)$ is our LCS map. Indeed, we have
  	\begin{align*}
  		\overline{\Phi}_1^*(\omega'_1 \boxplus_{\overline{\Phi}_1 \circ \Psi\circ \overline{\Phi}_2^{-1}} (-\omega'_2)) &= \overline{\Phi}_1^*\left[(r_1')^*\omega'_1 - (\overline{\Phi}_1 \circ \Psi\circ \overline{\Phi}_2^{-1})((r_2')^*\omega'_2)\right] \\
  		&= r_1^*\Phi_1^*\omega'_1 - \Psi \circ r_2^* \circ \Phi_2^* \omega'_2 \\
  		&= r_1^*\omega_1 - \Psi(r_2^*\omega_2) \\
  		&= \omega_1 \boxplus_{\Psi} (-\omega_2).
  		\end{align*}
  	Furthermore, if $\phi_1^t$ and $\phi_2^t$ are Hamiltonian, suppose that they are generated by the Hamiltonian vector fields $X_t$ and $Y_t$ resepctively, with Hamiltonians $G_t$ and $H_t$ respectively, so that
  	$$\iota_{X_t}\omega_1 = -\nabla G_t \qquad \mathrm{and} \qquad \iota_{Y_t}\omega_2 = -\nabla H_t.$$
  	Then by construction, we have that
  	$$(r_1)_* \frac{d}{dt}(\overline{\phi}^t) = X_t \qquad \mathrm{and} \qquad (r_2)_* \frac{d}{dt}(\overline{\phi}^t) = Y_t.$$
  	Hence,
  	\begin{align*}
  		\iota_{\frac{d}{dt}\overline{\phi}^t}(\omega_1 \boxplus_{\Psi} (-\omega_2)) &= \iota_{\frac{d}{dt}\overline{\phi}^t}\left(r_1^*\omega_1 - \Psi(r_2^*\omega_2)\right) \\
  			&= \iota_{X_t}\omega_1 - \Psi(\iota_{Y_t}\omega_2) \\
  			&= -\nabla_1 G_t + \Psi(\nabla_2 H_t) \\
  			&= (-r_1^*\nabla_1)(r_1^*G_t - \Psi(r_2^*H_t))
  	\end{align*}
  	That is, $\overline{\phi}^t$ is a Hamiltonian flow for the Hamiltonian function $r_1^*G_t - \Psi(r_2^*H_t)$.

\end{proof}
\color{black}

We are finally in a position to prove Theorem \ref{thm:main_LCS}.

\begin{proof}[Proof of Theorem \ref{thm:main_LCS}]
	We have constructed the covering map
	$$r = (r_1,r_2) \colon \scr{M}_1 \boxplus \scr{M}_2 \rightarrow M_1 \times M_2.$$
	By construction, it is a quotient of a product of covering spaces of $M_1$ and $M_2$, which are themselves only determined up to deck transformations, and so this covering map is canonical up to deck transformations. It suffices to verify the properties.
	
	\noindent (1) This follows by construction, see Definition \ref{dfn:LCS_product}, where we use the LCS structures $\omega_1 \boxplus_{\Phi} (-\omega_2)$ for $\Phi \in \Xi$.
	
	\noindent (2) The deck group of $r$ is $\mathbf{\Gamma} = \Gamma_1 \times \Gamma_2 / \Gamma_{12}$. If $(g_1,g_2) \in \mathbf{\Gamma}$, and $\Psi_{(g_1,g_2)}$ is the corresponding deck transformation, then for any $\Phi \in \Xi$,
	\begin{eqnarray*}
		\Psi^*(\omega_1 \boxplus_{\Phi} (-\omega_2)) &=& (e^{\rho_1(g_1)}\omega_1) \boxplus_{\Phi} (-e^{\rho_2(g_2)}\omega_2) \\
			&\sim& \omega_1 \boxplus_{\Phi} (-e^{-\rho_1(g_1)+\rho_2(g_2)}\omega_2) \\
			&=& \omega_1 \boxplus_{\Phi'} (-\omega_2)
	\end{eqnarray*}
	where $\sim$ is meant to represent that the identity extends to an LCS map intertwining the forms (given by multiplication on the bundle by the constant $e^{-\rho_1(g_1)}$, so that indeed $\Psi$ is an LCS map) and where in the last equality $\Phi' = e^{-\rho_1(g_1) + \rho_2(g_2)} \cdot \Phi$. Hence the deck transformations are indeed LCS maps, and in fact induce an action of $\mathbf{\Gamma}$ on $\Xi$ factoring through the $\R^+$-action via the map
	$$\mathbf{\Gamma} \rightarrow \R^+$$
	defined by $(g_1,g_2) \mapsto e^{-\rho_1(g_1) + \rho_2(g_2)}$. We see that this map is indeed injective, since if $\rho_1(g_1) = \rho_2(g_2)$, then $(g_1,g_2) \in \Gamma_{12}$ and hence represents a trivial element of $\mathbf{\Gamma}$.

	\noindent (3) The Lee classes of the LCS structures are all $r_1^*\mathfrak{l}_1 = r_2^*\mathfrak{l}_2$ by construction.
	
	\noindent (4) Let us now check for which $\Psi \in \Xi$ we have that $\Gamma(\phi)$ is an isotropic embedding. Denote by $\Phi$ the unique flat line bundle map such that the following composition is the identity as a bundle map.
	$$\xymatrix{(E_1,\nabla_1) \ar[r]^-{\Phi} \ar[d] & r_1^*(E_1,\nabla_1) \ar[r] \ar[d] & (E_1,\nabla_1) \ar[d] \\ M_1 \ar[r]^-{\Gamma(\phi)} & \scr{M}_1 \boxplus \scr{M}_2 \ar[r]^-{r_1} & M_1}$$
	Then the condition that $\Gamma(\phi)$ is isotropic with respect to some $\Psi \in \Xi$ is equivalent to the requirement that
	$$0 = \Phi^*(\omega_1 \boxplus_{\Psi} (-\omega_2)) = \Phi^*r_1^*\omega_1 - \Phi^*\Psi(r_2^*\omega_2) = \omega_1 - \Phi^*\Psi(r_2^*\omega_2).$$
	In other words, we have an isotropic embedding if and only if we have an LCS map $\scr{M}_1 \rightarrow \scr{M}_2$ given by the composition
	$$\xymatrix{(E_1,\nabla_1) \ar[d] \ar[r]^-{\Phi} & r_1^*(E_1,\nabla_1) \ar[r]^-{\Psi^{-1}} \ar[d] & r_2^*(E_2,\nabla_2) \ar[r] \ar[d] & (E_2,\nabla_2) \ar[d] \\ M_1 \ar[r]^-{\Gamma(\phi)} & \scr{M}_1 \boxplus \scr{M}_2 \ar@{=}[r] & \scr{M}_1 \boxplus \scr{M}_2 \ar[r]^-{r_2} & M_2}$$
	since this just reads $\omega_1 = \Phi^*\Psi(r_2^*\omega_2)$. But such $\Psi$ exists if and only if the bottom row is an LCS map, i.e. if $\phi = r_2 \circ \Gamma(\phi)$ is an LCS map, in which case $\Psi$ is uniquely defined.
	
	\noindent (5) and (6)
        Both of these follow from Theorem \ref{thm:comp_property}: just observe that the family of LCS map $\psi_t$ is the family  of maps given Theorem \ref{thm:comp_property} applied to $(\phi_t,\Phi_t)$ and starting at the indentity. The construction of $\Omega_{\Gamma(\phi_t)}$ in the previous point is exactly $r_1^*\omega_1\boxplus \left(-r_2^*\Phi_t^*\omega_2\right)$ which is $\Omega_{\operatorname{Id}}$. Proposition \ref{prop:LCS_vector_fields} implies that
		$$\Omega_{\Gamma(\phi_t)} = e^{\int_0^t \mu_{\tau}d\tau} \cdot \Omega_{\Gamma(\mathrm{Id})}.$$
	(recall that the $\mathbb{R}^*$-torsor is given precisely by the action of $\Phi_t$ in this situation). As $\mu=0$ in the Hamiltonian case, (6) follows also from Theorem \ref{thm:comp_property}.
\end{proof}

Finally, we prove Theorem \ref{thm:exact}:

\begin{proof}[Proof of Theorem \ref{thm:exact}]
	Suppose $\scr{M}_1$ is an ELCS manifold with primitive $\lambda_1$. The Liouville vector field $V_{\lambda_1}$ has expansion coefficient $\mu_{V_{\lambda_1}} = 1$, from which it follows that if $\phi_t$ is the flow of $V_{\lambda_1}$ and $\Phi_t$ is defined so that $(\phi_t,\Phi_t)\colon \scr{M}_1 \rightarrow \scr{M}_1$ is the corresponding LCS map. Applying Theorem \ref{thm:comp_property} with $(\phi_t,\Phi_t)$ yields an LCS syplectomorphism
	$$\scr{M}_1 \boxplus_{\Psi} \scr{M}_2 \rightarrow \scr{M}_1 \boxplus_{e^t\Psi} \scr{M}_2.$$
	If furthermore $\scr{M}_2$ is exact with chosen primitive $\lambda_2$, then
	$$\omega_1\boxplus_{\Psi}(-\omega_2)=d_{r_1^*\nabla_1}(\lambda_1\boxplus_{\Psi} (-\lambda_2))$$
	and thus $M_1\boxplus M_2$ is naturally ELCS.
\end{proof}

\begin{rem}
	Notice that for ELCS structures, we allow a change of the primitive by $\nabla$-exact forms. Indeed, we have
	$$(\lambda_1 + \nabla_1 s_1) \boxplus_{\Psi} (-\lambda_2 - \nabla_2 s_2) = \lambda_1 \boxplus_{\Psi} (-\lambda_2) + (r_1^*\nabla_1)(s_1 \boxplus_{\Psi} (-s_2)).$$
	Hence this construction of an ELCS structure on the product is well-defined in this category.
\end{rem}

\color{black}

\section{Examples.}
\label{sec:examples}

\subsection{Comparison with the contact product.}
\label{sec:comp-with-cont}

Recall that if $(M_1,\xi_1)$ and $(M_2,\xi_2)$ are contact manifolds, then there is a well-known construction of a contact product (see \cite{zbMATH06015967} for instance). Explicitly, if $\xi_1$ and $\xi_2$ have contact forms $\alpha_1$ and $\alpha_2$, then one constructs on $M_1 \times M_2 \times \R$ the contact manifold with contact form $e^t \alpha_1 + \alpha_2$.

We may naturally recover this contact product from our locally conformal symplectic picture. First, we notice that the (twisted) conformal symplectisation of a contact manifold actually recovers the contact manifold itself.

\begin{prop} \label{prop:reduce}
	Suppose $M$ is an odd-dimensional manifold, and $q \colon X \rightarrow M$ is an oriented $S^1$ fiber bundle. Suppose that there is an ELCS structure $(E,\nabla,\lambda)$ on $X$, with the property that $V_{\lambda}$ is tangent to the fibers and positively oriented. Then the following hold.
	\begin{enumerate}
		\item There is some constant $T > 0$ such that all of the fibers have the same period $T$ as orbits of $V_{\lambda}$.
		\item The fiber bundle $q$ is trivialisable, and hence admits global sections.
		\item There is a canonical cohomology class $\mathfrak{l} \in H^1(M;\R)$ such that
		$$\ln(\mathrm{hol}(\nabla)) = q^*\mathfrak{l} + T \cdot \mathrm{PD}(S),$$
		where $\mathrm{PD}(S)$ is the cohomology class Poincar\'e Dual to the homology class of a section of $q$ (and $T$ is the period of the orbits of $V_{\lambda}$).
		\item There is a canonical contact structure $\xi$ on $M$ such that for any flat connection $\overline{\nabla}$ on $E_{\xi}$ with $\ln(\mathrm{hol}(\overline{\nabla})) = \mathfrak{l}$, we have ELCS isomorphisms
		$$(X,E,\nabla,\lambda) \cong S_T^+(M,\xi,\overline{\nabla})$$
		intertwining the natural projections to $M$, and which are strict in the sense that $\lambda$ is identified with the canonical primitive $\lambda^{E_{\xi}}$ on $S_T^+(M,\xi,\overline{\nabla})$.
	\end{enumerate}
\end{prop}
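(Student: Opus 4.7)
My plan is to prove the four items sequentially, building on the fundamental identity $\scr{L}^{\scr{E}}_{V_{\lambda}}\lambda = \lambda$. This follows from Cartan's formula (Theorem \ref{thm:cartan}) together with $\iota_{V_{\lambda}}\omega = \lambda$ and $\iota_{V_{\lambda}}\lambda = \iota_{V_{\lambda}}^2\omega = 0$. Integrating along the flow $\phi_t$ of $V_{\lambda}$ yields $F_{\phi_t}^{*}\lambda = e^t \lambda$.

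For (1), since $V_{\lambda}$ spans each fiber $F_p$ and $\iota_{V_{\lambda}}\lambda = 0$, the restriction $\lambda|_{F_p}$ vanishes. Let $T_p > 0$ denote the period of $V_{\lambda}$ on $F_p$, so $\phi_{T_p}|_{F_p} = \mathrm{id}$. For $x \in F_p$ and $X \in T_x X$ transverse to the fiber, the pushforward $d\phi_{T_p}(X)$ differs from $X$ only by a vertical vector, on which $\lambda$ vanishes; hence $\lambda_x(d\phi_{T_p} X) = \lambda_x(X)$. Evaluating $F_{\phi_{T_p}}^{*}\lambda = e^{T_p}\lambda$ at $(x,X)$ produces an algebraic relation between $e^{T_p}$ and the scalar by which $F_{\phi_{T_p}}$ acts on $E_x$, namely the holonomy of $\nabla$ around the oriented loop $F_p$. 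Non-degeneracy of $\omega$ together with $\lambda|_{F_p} = 0$ forces $\lambda$ to be nonzero in some transverse direction, so the equation has content and pins down $T_p$ in terms of the class $[F_p] \in H_1(X;\Z)$. Since $[F_p]$ is locally constant in $p$ and $M$ is connected, $T_p$ is a common constant $T$.

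For (2), the constancy of $T$ implies $\phi_T = \mathrm{id}_X$, so $V_{\lambda}$ is the infinitesimal generator of a free $S^1$-action on $X$. I then apply the Gysin sequence of the oriented $S^1$-bundle
\[ H^1(X;\R) \xrightarrow{\pi_{*}} H^0(M;\R) \xrightarrow{\cup e} H^2(M;\R), \]
where $\pi_{*}$ is integration along the fiber. The class $\ln(\mathrm{hol}(\nabla))$ integrates to $\pm T \neq 0$ on each fiber, so $\pi_{*}$ is surjective onto $H^0(M;\R) \cong \R$; exactness then forces the Euler class $e$ to vanish and $q$ is trivializable. For (3), a K\"unneth decomposition $H^1(X;\R) \cong q^{*}H^1(M;\R) \oplus \R \cdot \mathrm{PD}(S)$ for a chosen section $S$ gives $\ln(\mathrm{hol}(\nabla)) = q^{*}\mathfrak{l} + c \cdot \mathrm{PD}(S)$, and pairing with $[F_p]$ forces $c = \pm T$. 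The ambiguity in $\mathfrak{l}$ under change of section lies in $T \cdot H^1(M;\Z)$; this ambiguity is absorbed upon passing to the isomorphism class of a flat line bundle $(E_\xi,\bar{\nabla})$ with prescribed holonomy, c.f.\ Proposition \ref{prop:flat-line-bundle-facts}.

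For (4), define $\xi := q_{*}\ker\lambda$. That $\ker\lambda$ descends to a well-defined hyperplane distribution follows from $\Phi_t^{*}\lambda = \lambda$ together with $\Phi_t$ being a fiberwise scalar, which force $d\phi_t(\ker\lambda) = \ker\lambda$. The contact condition on $\xi$ follows from the non-degeneracy of $\omega = d_{\nabla}\lambda$ restricted to $\ker\lambda$, which passes to the quotient by the vertical. The isomorphism $q^{*}E_\xi \xrightarrow{\sim} E$ is provided by $\lambda$ itself, factoring through $TX/\ker\lambda \cong q^{*}(TM/\xi) = q^{*}E_\xi$. To construct the ELCS isomorphism $X \cong S^{+}_T(M,\xi,\bar{\nabla})$, I pass to the covering $\wt{X} \to X$ which unrolls the $S^1$-fibers to $\R$, choose a section $s_0 \colon M \to \wt{X}$, and use the Liouville flow to parameterize $\wt{X} \cong M \times \R$. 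The identity $F_{\phi_t}^{*}\lambda = e^t\lambda$ then identifies $\lambda$ with the tautological primitive $e^t\alpha$ on $S^{+}(M,\xi) \cong M \times \R$ from Example \ref{excontact}, provided $s_0$ is chosen so that $\lambda|_{s_0(M)}$ matches the chosen contact form $\alpha$; the $\Z$-quotient actions on both sides then agree by construction. The main obstacle is item (4): ensuring the ELCS isomorphism is strict (matching primitives, not only LCS structures), which requires the careful matching of $s_0$ with $\alpha$ together with the check that the induced iso $E \cong q^{*}E_\xi$ intertwines the connections $\nabla$ and $q^{*}\bar{\nabla}$ in the exact sense needed.
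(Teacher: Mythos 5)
Your proposal is correct and follows the same overall architecture as the paper (period constancy, then Gysin, then K\"unneth, then descent of $\ker\lambda$ and a Liouville-flow identification), but your proof of item (1) is genuinely different and worth comparing. The paper works locally: it trivializes $X = S^1 \times D^{2n+1}$ and $(E,\nabla) = (\underline{\R}, T\,dt)$, writes $\lambda = \alpha_t$ and $V_\lambda = g\,\partial_t$, and solves the resulting ODE $\alpha_t = g(\dot\alpha_t + T\alpha_t)$ to extract $T = \int_0^1 dt/g$, identifying the period with the fiber-holonomy. You instead integrate $\scr{L}^{\scr{E}}_{V_\lambda}\lambda = \lambda$ to get $F_{\phi_t}^*\lambda = e^t\lambda$, evaluate at time $T_p$ on a transverse vector, and use that $d\phi_{T_p}X - X$ is vertical (because $q\circ\phi_t = q$) while $\lambda$ kills vertical vectors, so the only surviving factor is the parallel transport around the fiber; this is coordinate-free and arguably cleaner, and it makes the identity $T_p = \pm\langle\ln\mathrm{hol}(\nabla),[F_p]\rangle$ conceptually transparent, at the cost of having to be careful with the holonomy/parallel-transport sign convention (the paper's local computation fixes the sign as $+T$, which you leave as $\pm$; with $V_\lambda$ positively oriented and the paper's convention $e^{[\eta_s]} = \mathrm{hol}(\nabla)$ the sign is $+$). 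Your verification of the contact condition in (4) — that $\omega|_{\ker\lambda}$ has kernel exactly $\langle V_\lambda\rangle$ and descends to a nondegenerate form on $\xi$ — also differs slightly from the paper, which instead observes that a section is a hypersurface transverse to a Liouville vector field and quotes the standard symplectic fact; both are fine. Two small loose ends relative to the paper's proof of (4): you should normalize the bundle isomorphism $\Sigma \in \mathrm{Iso}_{\sigma}(E_\xi, E)$ along your section so that $\Sigma^*\lambda = \alpha$ (this is exactly the "careful matching" you flag, and the paper completes it by checking agreement on $\sigma(M)$, on the normal direction via $\iota_{V_\lambda}\lambda = 0$, and then flowing); and the statement asks for the identification for \emph{every} flat $\overline{\nabla}$ with holonomy $e^{\mathfrak{l}}$, which the paper handles by writing $\overline{\nabla}' = \overline{\nabla} + df$ and flowing each fiber for time $f(m)$ — your appeal to Proposition \ref{prop:flat-line-bundle-facts} gestures at this but does not quite supply the strict intertwining of primitives.
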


\begin{proof}
  	\begin{enumerate}
  		\item It suffices to prove this result locally, so we may assume that $M = D^{2n+1}$ is an open disk. In such a case, $X$ is automatically a trivial bundle, so we may take $X = S^1 \times M = S^1 \times D^{2n+1}$, where $V_{\lambda}$ is tangent to the $S^1$-fibers over $M$. The only possible holonomy in this class is in the $S^1$ direction. Hence, in this local case of $X = S^1 \times D^{2n+1}$, we may take $(E,\nabla) = (\underline{\R},T \cdot dt)$, where $t$ is the coordinate along $S^1 = \R/\Z$, and $T \in \R$ is some constant. We thus have $\lambda \in \Omega^1(\R/\Z \times M)$, and we may write
  		$$\lambda = \alpha_t + f_t \cdot dt,$$
  		where $\alpha_t \in \Omega^1(M)$ and $f_t \in C^{\infty}(M)$. We have that $V_{\lambda} = g \cdot \partial_t$ for some $g \in C^{\infty}(\R/\Z \times M)$ with $g > 0$, and so that fact that $\lambda(V_{\lambda}) = 0$ implies $f_t = 0$. That is, we have simply
  		$$\lambda = \alpha_t.$$
  		But now, we have $\lambda = \iota_{V_{\lambda}}d_{\nabla}\lambda$, which using our explicit connection gives:
  		\begin{align*}
  			\alpha_t &= \iota_{g \partial_t}\left(d_M\alpha_t + dt \wedge (\dot{\alpha}_t + T\alpha_t)\right) \\
  				&= g \cdot (\dot{\alpha}_t + T\alpha_t)
  		\end{align*}
  		We see therefore that for each $q \in M$, we have
  		$$\frac{d}{dt}\left(e^{-\int_0^{t}\left(T - \frac{1}{g(\tau,q)}\right)d\tau}\alpha_t\right) = 0.$$
  		Thus,
  		$$\alpha_t = e^{\int_0^{t}\left(T - \frac{1}{g(\tau,q)}\right)d\tau}\beta$$
  		for some fixed $\beta$. But also, $\alpha_0 = \alpha_1$, and so we find that for all $q \in M$,
  		$$T = \int_0^1\frac{1}{g(t,q)}dt.$$
  		But the left-hand side is constant, whereas right-hand side is just the period of the orbit of $V_{\lambda}$ over $q$. Hence, all orbits have the same period (and $T > 0$).
  		\item The Gysin sequence for $q$ gives an exact sequence
  		$$0 \rightarrow H^1(M) \xrightarrow{\pi^*} H^1(X) \xrightarrow{\pi_*} H^0(M) \xrightarrow{*e} H^2(M)$$
  		where $e \in H^2(M)$ is the Euler class. The last arrow is injective unless $e = 0$. But if the last arrow is injective, then $\pi^*$ is an isomorphism, but then every class of $H^1(X)$ would have to pair with $[F]$ trivially. Hence we must have $e=0$. On the other hand, the Euler class classifies oriented $S^1$-bundles, and so it follows that $q$ is trivialisable.
  		\item Given that $X = S^1 \times M$, the result is an easy consequence of the K\"unneth Theorem together with our computation that $\langle \ln(\mathrm{hol}(\nabla)), [F] \rangle = T$.
  		\item Since $\scr{L}^{(E,\nabla)}_{V_{\lambda}}\lambda = 0$, we have that $\ker \lambda$ is preserved by the flow of $V_{\lambda}$. In addition, $V_{\lambda} \in \ker \lambda$. Thus, there is a smooth codimension-1 distribution $\xi \leq TM$ such that at any $x \in X$, we have
  		$$\ker \lambda_x = (d\pi_x)^{-1}(\xi_{q(x)}).$$
  		Notice that if $\sigma \colon M \rightarrow X$ is a section of $q$, then for any $m \in M$, we have that $\xi_m = (d\sigma)_m^{-1}(\ker \lambda_{\sigma(m)})$. In this way, working locally near $\sigma(m)$, we see that we have a hypersurface transverse to a Liouville vector field, and hence (since we are now in the standard symplectic and contact setting) we have that $(M,\xi)$ is automatically contact.

  		Pick the isomorphism of flat line bundles $\Sigma \in \mathrm{Iso}_{\sigma}(E_{\xi},E)$ such that $\Sigma^*\lambda = \alpha \in \Omega^1(M;E_{\xi})$ is the canonical contact form. Write $\overline{\nabla} = \Sigma^*\nabla$. Notice that automatically we have
  		$$\ln(\mathrm{hol}(\overline{\nabla})) = \sigma^*(\ln(\mathrm{hol}(\nabla)) = \mathfrak{l}.$$
  		We see that the Liouville flow allows us to use the section $\sigma$ to find an identification
  		$$\psi \colon X \xrightarrow{\sim} \R/T\Z \times M$$
  		where $V_{\lambda}$ is identified with $\partial_t$, and such that $\psi^{-1}(0,m) = \sigma(m)$. We similarly have a canonical section $\sigma' \colon M' \hookrightarrow S_T^+(M,\xi,\overline{\nabla})$ such that $(\sigma')^*\lambda^{E_{\xi}} = \alpha$, and which allows us to identify
  		$$\psi' \colon S_T^+(M,\xi,\overline{\nabla}) \xrightarrow{\sim} \R/T\Z \times M,$$
  		again where $V_{\lambda}$ is identified with $\partial_t$ and $(\psi')^{-1}(0,m) = \sigma'(m)$. We therefore have a diffeomorphism
  		$$\phi := \psi^{-1} \circ \psi' \colon S_T^+(M,\xi,\nabla) \rightarrow X$$
  		intertwining the projections to $M$ and the fiberwise Liouville vector fields. We see that $\phi$ preserves the Lee classes, and hence we may extend $\phi$ to an isomorphism $\Phi \in \mathrm{Iso}_{\phi}((E,\nabla),\pi^*(E_{\xi},\nabla_{\xi}))$. In fact, by Proposition \ref{prop:restrict_torsor}, we may choose $\Phi$ so that $(\sigma')^* = \Sigma^*\Phi^*$. Thus,
  		$$\Sigma^*\lambda = \alpha = (\sigma')^*\lambda^{E_{\xi}} = \Sigma^*\Phi^*\lambda^{E_{\xi}}.$$
  		It follows that $\Phi^*\lambda^{E_{\xi}}$ and $\lambda$ match on their restrictions to $\sigma(M)$. But also, both forms vanish on the Liouville vector field which is transverse to $\sigma(M)$, so actually these forms also match along the normal bundles to $\sigma(M)$. Finally, we have that both
  		$$\scr{L}^{(E,\nabla)}_{V_{\lambda}}\lambda = 0$$
  		and
  		$$\scr{L}^{(E,\nabla)}_{V_{\lambda}}\Phi^*\lambda^{E_{\xi}} = \Phi^*\scr{L}^{(E_{\xi},\nabla_{\xi})}_{V_{\lambda}}\lambda^{E_{\xi}} = \Phi^*0 = 0.$$
  		Hence, once we have that $\Phi^*\lambda^{E_{\xi}}$ and $\lambda$ match along the total tangent bundle to $X$ along $\sigma(M)$, we may flow along $V_{\lambda}$ to find that they agree everywhere. Thus, $(\phi,\Phi)$ is a strict ELCS map, i.e. with $\Phi^*\lambda^{E_{\xi}} = \lambda$.
  		
  		We have shown that we have an identification as desired for a single flat connection $\overline{\nabla}$ on $E_{\xi}$. If we have another flat connection $\overline{\nabla}'$ with the same holonomy class, then we may write $\overline{\nabla}' = \overline{\nabla} + df$ for $f \in C^{\infty}(M)$, and we obtain the base of a strict ELCS isomorphism $\phi \colon S_T^+(M,\xi,\overline{\nabla}) \cong S_T^+(M,\xi,\overline{\nabla}')$ by flowing each fiber over $m \in M$ for time $f(m)$, as is easily checked by computation. 
  	\end{enumerate}
\end{proof}

\color{black}

We may now recover the contact product. First, fix flat connections $\nabla_i$ on $E_{\xi_i} = TM_i/\xi_i$ with trivial holonomy. Recall from Example \ref{excontact} that for fixed $T > 0$, we recover the conformal symplectisations, given as ELCS manifolds
$$S_T^+(M_i,\xi_i,\nabla_i) = S^+(M_i,\xi_i,\nabla_i)/\langle e^T\rangle.$$
We see that the projection $S^+(M_i,\xi_i,\nabla_i) \rightarrow S_T^+(M_i,\xi_i,\nabla_i)$ is a covering map which trivializes the Lee class. The products
$$S_T^+(M_1,\xi_1,\nabla_1) \boxplus S_T^+(M_2,\xi_2,\nabla_2)$$
are all ELCS-symplectomorphic by Theorem \ref{thm:exact}, and by construction, they are explicitly given by the quotient
$$S^+(M_1,\xi_1,\nabla_1) \times S^+(M_2,\xi_2,\nabla_2)/\langle e^T \rangle,$$
where $e^T$ acts diagonally. The Liouville vector field on $S^+(M_1,\xi_1,\nabla_1) \times S^+(M_2,\xi_2,\nabla_2)$ is just $(V_{\lambda_1},V_{\lambda_2})$, and we see that on the quotient, this defines an $S^1$-fibration on $S_T^+(M_1,\xi_1,\nabla_1) \boxplus S_T^+(M_2,\xi_2,\nabla_2)$. Hence, applying Proposition \ref{prop:reduce}, we have that
$$S_T^+(M_1,\xi_1,\nabla_1) \boxplus S_T^+(M_2,\xi_2,\nabla_2) \cong S_T^+(Y,\xi,\nabla)$$
for some $(Y,\xi,\nabla)$, which is determined up to contactomorphism. If we fix fix sections $s_i \colon M_i \rightarrow S_T^+(M_i,\xi_i,\nabla_i)$, we obtain a codimension 1 embedding
$$M_1 \times M_2 \times \R \hookrightarrow S_T^+(M_1,\xi_1,\nabla_1) \boxplus S_T^+(M_2,\xi_2,\nabla_2)$$
given by $(x,y,t) \mapsto (e^t \cdot x, y)$, which is transverse to the Liouville vector field, and hence
$$(Y,\xi,\nabla) \cong (M_1\times M_2 \times \R, \ker \lambda|_Y).$$

\subsection{Product of twisted cotangent bundles.}
\label{sec:prod-twist-cotang}

Let $(E_1,M_1,\nabla_1)$ and $(E_2,M_2,\nabla_2)$ be two flat line bundles. Then we have an indentification of l.c.s manifolds:

$$T^*_{(E_1,\nabla_1)}M_1 \boxplus T^*_{(E_2,\nabla_2)}M_2 = T^*_{(r_1^*E_1,r_1^*\nabla_1)}(M_1 \boxplus M_2).$$

This is easily verified by Proposition \ref{prop:covering_map}, since both sides have trivializing covers $$T^*(\widetilde{M_1}\times\widetilde{M_2})\cong T^*\widetilde{M}_1\times T^*\widetilde{M}_2$$ (with their standard symplectic structures) such that the corresponding deck group actions are the same.

\subsection{Products of isotropics and Lagrangians}

Suppose that we have a submanifold $L$ of a manifold $M$ with specified class $\mathfrak{l} \in H^1(M)$. We recall from Section \ref{ssec:twisted_product} that if $q \colon \wt{M} \rightarrow M$ is the minimal trivializing cover with deck group $\Gamma$, then the class $\mathfrak{l} \in H^1(M;\R)$ may be thought of as a group homomorphism $\rho_{\mathfrak{l}} \colon H_1(M) \rightarrow \R$ which factors as
$$H_1(M) \twoheadrightarrow \Gamma \stackrel{\overline{\rho}_{\mathfrak{l}}}{\hookrightarrow} \R.$$
Similarly, the restriction of $\mathfrak{l}|_L \in H^1(L)$ corresponds to a group homomorphism $\rho_{\mathfrak{l}|_L} \colon H_1(L) \rightarrow \R$, which similarly factors as
$$H_1(L) \twoheadrightarrow \Gamma(L) \stackrel{\overline{\rho}_{\mathfrak{l}|_L}}{\hookrightarrow} \R$$
where $\Gamma(L) = H_1(L)/\ker(\rho_{\mathfrak{l}|_L})$. Since $\rho_{\mathfrak{l}|_L} = \rho_{\mathfrak{l}} \circ \iota_*$, we find that $\Gamma(L) \leq \Gamma$ is canonically a subgroup.

\begin{lemma} \label{lem:Lagn_lift}
	If $L \subset M$ is a connected submanifold, then $\Gamma(L)$ is the stabilizer of the action of $\Gamma$ on $\pi_0(q^{-1}(L))$. That is, $\pi_0(q^{-1}(L)) = \Gamma/\Gamma(L)$, and each connected component $\wt{L}$ of $q^{-1}(L)$ has that
	$$q|_{\wt{L}} \colon \wt{L} \rightarrow L$$
	is a covering map with deck group $\Gamma(L)$, i.e. the minimal trivializing cover with respect to $\mathfrak{l}|_L \in H^1(L)$.
\end{lemma}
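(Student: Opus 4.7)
The plan is to use elementary covering space theory to identify each connected component $\widetilde{L} \subseteq q^{-1}(L)$ as a quotient $\widetilde{M}/\mathrm{Stab}(\widetilde{L})$, and then apply the Galois correspondence between subgroups of $\pi_1(L)$ and connected covers of $L$ to identify that stabilizer with $\Gamma(L)$.

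First I would observe that $q^{-1}(L) \rightarrow L$ is itself a covering map (as the restriction of the covering $q$ to the preimage of a subset), and hence each connected component $\widetilde{L}$ of $q^{-1}(L)$ covers $L$ via $q|_{\widetilde{L}}$. Next I would establish transitivity of the $\Gamma$-action on $\pi_0(q^{-1}(L))$: given two lifts $\tilde{x}_1, \tilde{x}_2 \in q^{-1}(L)$ of points in $L$, since $L$ is connected we may choose a path in $L$ between their images and lift it starting at $\tilde{x}_1$; the endpoint $\tilde{x}_2'$ lies in the same component of $q^{-1}(L)$ as $\tilde{x}_1$ and satisfies $q(\tilde{x}_2') = q(\tilde{x}_2)$, so $\tilde{x}_2' = g \cdot \tilde{x}_2$ for some $g \in \Gamma$, and hence the components containing $\tilde{x}_1$ and $\tilde{x}_2$ are in the same $\Gamma$-orbit.

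Once transitivity is established, the set $\pi_0(q^{-1}(L))$ is identified with $\Gamma/\mathrm{Stab}(\widetilde{L})$ for any chosen component $\widetilde{L}$, so it remains to compute this stabilizer. The key observation is that if $\tilde{x}_1, \tilde{x}_2 \in \widetilde{L}$ satisfy $q(\tilde{x}_1) = q(\tilde{x}_2)$, then the unique deck transformation $g \in \Gamma$ with $\tilde{x}_2 = g \cdot \tilde{x}_1$ sends the connected component $\widetilde{L}$ to a component containing $\tilde{x}_2$, which must be $\widetilde{L}$ itself; hence $g \in \mathrm{Stab}(\widetilde{L})$. This shows that the fibers of $q|_{\widetilde{L}}$ coincide with the orbits of $\mathrm{Stab}(\widetilde{L})$ acting on $\widetilde{L}$, and hence that $q|_{\widetilde{L}} \colon \widetilde{L} \to L$ is a normal covering with deck group $\mathrm{Stab}(\widetilde{L})$.

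The final and most delicate step, which I expect is the main technical point, is to identify this deck group with $\Gamma(L)$. Fix a basepoint $\tilde{x}_0 \in \widetilde{L}$ with image $x_0 \in L$. By the Galois correspondence, the deck group of $q|_{\widetilde{L}}$ is $\pi_1(L,x_0)/(q|_{\widetilde{L}})_* \pi_1(\widetilde{L},\tilde{x}_0)$. A loop $\gamma$ in $L$ based at $x_0$ lifts to a loop in $\widetilde{L}$ if and only if its lift to $\widetilde{M}$ based at $\tilde{x}_0$ is a loop, which by the definition of $q$ as the minimal trivializing cover is the case if and only if the class of $\iota_* \gamma \in \pi_1(M,x_0)$ lies in $\ker(\rho_{\mathfrak{l}})$; equivalently, $\rho_{\mathfrak{l}|_L}(\gamma) = 0$. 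Thus $(q|_{\widetilde{L}})_*\pi_1(\widetilde{L},\tilde{x}_0) = \ker(\rho_{\mathfrak{l}|_L})$, so the deck group is $\pi_1(L,x_0)/\ker(\rho_{\mathfrak{l}|_L}) = \Gamma(L)$. Since the inclusion of this deck group into $\Gamma$ (coming from the inclusion $\mathrm{Stab}(\widetilde{L}) \hookrightarrow \Gamma$) agrees with the canonical inclusion $\Gamma(L) \hookrightarrow \Gamma$ coming from $\iota_*$ (which may be checked by tracking a representative path), we conclude $\mathrm{Stab}(\widetilde{L}) = \Gamma(L)$. Since $q|_{\widetilde{L}}$ trivializes $\mathfrak{l}|_L$ and has deck group exactly $\Gamma(L)$, it realizes the minimal trivializing cover.
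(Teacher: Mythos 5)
Your argument is correct and follows essentially the same route as the paper's proof: both identify the stabilizer of a component by lifting paths in $L$ and using the factorization $\rho_{\mathfrak{l}|_L} = \rho_{\mathfrak{l}} \circ \iota_*$, so that a loop in $L$ lifts to a loop in $\wt{M}$ precisely when it lies in $\ker(\rho_{\mathfrak{l}|_L})$. The paper compresses this into a single paragraph, leaving implicit the transitivity of the $\Gamma$-action on $\pi_0(q^{-1}(L))$ and the regularity of $q|_{\wt{L}}$, which you spell out correctly.
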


\begin{proof}
	An element $g \in \Gamma$ is in the stabilizer of the action of $\Gamma$ on $\pi_0(q^{-1}(L))$ if and only if for any $y \in q^{-1}(L)$, there is a path $\gamma$ in $q^{-1}(L)$ starting at $y$ and ending at $g \cdot y$. This means that the loop $q \circ \gamma$, considered as a homology class in $H_1(M)$, projects to $g$ under the map $H_1(M) \twoheadrightarrow \Gamma$. But the loop $q \circ \gamma$ is contained in $L$, and hence $g$ is in the stabilizer if and only if $g \in \Gamma(L)$.
\end{proof}

Let us now consider our product construction.

\begin{thm}
	Suppose that $L_1 \subset M_1$ and $L_2 \subset M_2$ are connected smooth submanifolds, and we have specified $\mathfrak{l}_1 \in H^1(M_1)$ and $\mathfrak{l}_2 \in H^1(M_2)$. Let
	$$r \colon M_1 \boxplus M_2 \rightarrow M_1 \times M_2$$
	be our product construction in this situation, with deck group $\Gamma_{12}$. Then the stabilizer of the action of $\Gamma_{12}$ on $\pi_0(r^{-1}(L_1 \times L_2))$ is the subgroup $\Gamma_{12}(L_1,L_2) \leq \Gamma_{12}$ given by the diagram
	$$\xymatrix{\Gamma_{12}(L_1,L_2) \ar[rr] \ar[dd] \ar@{_{(}-->}[rd] & & \Gamma_1(L_1) \ar@{^{(}->}[d] \\  & \Gamma_{12} \ar[r] \ar[d] & \Gamma_1 \ar[d] \\ \Gamma_2(L_2) \ar@{^{(}->}[r] & \Gamma_2 \ar[r] & \R}$$
	where the inner and outer squares are pullback squares (so that the dotted arrow exists and is unique, and is easily checked to be an inclusion). In particular, $\pi_0(r^{-1}(L_1 \times L_2)) = \Gamma_{12}/\Gamma_{12}(L_1,L_2)$, and each connected component $\wt{L} \subset r^{-1}(L_1 \times L_2)$ satisfies that
	$$r|_{\wt{L}} \colon \wt{L} \rightarrow L_1 \times L_2$$
	is isomorphic to the covering map $L_1 \boxplus L_2 \rightarrow L_1 \times L_2$ corresponding to $\mathfrak{l}_1|_{L_1}$ and $\mathfrak{l}_2|_{L_2}$ (with deck group $\Gamma_{12}(L_1,L_2)$).
\end{thm}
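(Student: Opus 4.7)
The plan is to reduce everything to analysing the preimage of $L_1 \times L_2$ in the bigger cover $\wt{M}_1 \times \wt{M}_2$. Set $X := q_1^{-1}(L_1) \times q_2^{-1}(L_2) \subset \wt{M}_1 \times \wt{M}_2$ and $Y := r^{-1}(L_1 \times L_2) \subset M_1 \boxplus M_2$, so the regular covering $\wt{M}_1 \times \wt{M}_2 \to M_1 \boxplus M_2$ (with deck group $\Gamma_{12}$) restricts to a regular covering $X \to Y$. Since $L_1 \times L_2$ is connected and the restriction $r|_Y \colon Y \to L_1 \times L_2$ is a regular cover with deck group $\Gamma_{12}$, every component of $Y$ surjects onto $L_1 \times L_2$, and $\Gamma_{12}$ acts transitively on each fibre and hence transitively on $\pi_0(Y)$. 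So it suffices to identify the stabilizer of a single component of $Y$ under $\Gamma_{12}$.

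Applying Lemma \ref{lem:Lagn_lift} to each factor separately, the components of $q_i^{-1}(L_i)$ are indexed by $\Gamma_i/\Gamma_i(L_i)$, and a fixed component $\wt{L}_i$ is canonically the minimal trivialising cover of $L_i$ with deck group $\Gamma_i(L_i)$. Therefore a connected component of $X$ has the form $\wt{L}_1 \times \wt{L}_2$, with stabilizer $\Gamma_1(L_1) \times \Gamma_2(L_2)$ inside $\Gamma_1 \times \Gamma_2$. Intersecting with $\Gamma_{12}$ gives the stabilizer of this component as a $\Gamma_{12}$-subset:
$$\mathrm{Stab}_{\Gamma_{12}}(\wt{L}_1 \times \wt{L}_2) = \Gamma_{12} \cap \bigl(\Gamma_1(L_1) \times \Gamma_2(L_2)\bigr).$$
The universal property of the outer pullback square (together with the inclusions $\Gamma_i(L_i) \hookrightarrow \Gamma_i$) produces a unique map $\Gamma_{12}(L_1,L_2) \to \Gamma_{12}$; since both groups inject into $\Gamma_1 \times \Gamma_2$ via their pullback legs, this map is an inclusion, and its image is exactly the intersection above. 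Hence $\pi_0(Y) = \Gamma_{12}/\Gamma_{12}(L_1,L_2)$ as claimed.

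Finally, for the component $\wt{L} \subset Y$ obtained as the image of $\wt{L}_1 \times \wt{L}_2$, the restricted covering $\wt{L}_1 \times \wt{L}_2 \to \wt{L}$ is a regular cover with deck group equal to the stabilizer $\Gamma_{12}(L_1,L_2)$ just computed, so $\wt{L} = (\wt{L}_1 \times \wt{L}_2)/\Gamma_{12}(L_1,L_2)$. Since $\wt{L}_i \to L_i$ is the minimal trivialising cover for $\mathfrak{l}_i|_{L_i}$ with deck group $\Gamma_i(L_i)$, and $\Gamma_{12}(L_1,L_2)$ is by definition the fibre product $\Gamma_1(L_1) \times_{\R} \Gamma_2(L_2)$, this quotient matches the construction of $L_1 \boxplus L_2$ in Section \ref{ssec:twisted_product} verbatim, and $r|_{\wt{L}}$ is identified with the canonical projection $L_1 \boxplus L_2 \to L_1 \times L_2$.

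The main conceptual step is the stabilizer computation: it rests on the compatibility between the two pullback squares defining $\Gamma_{12}$ and $\Gamma_{12}(L_1,L_2)$, after which everything else is routine bookkeeping with regular coverings, applied factorwise using Lemma \ref{lem:Lagn_lift}.
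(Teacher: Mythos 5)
Your argument follows essentially the same route as the paper's proof: both reduce to the cover $\wt{M}_1 \times \wt{M}_2$, run the single-factor argument of Lemma \ref{lem:Lagn_lift} in each coordinate to see that $(g_1,g_2)$ stabilizes a component exactly when $g_i \in \Gamma_i(L_i)$, identify the stabilizer as $(\Gamma_1(L_1)\times\Gamma_2(L_2)) \cap \Gamma_{12}$, and observe that this intersection is precisely the pullback $\Gamma_{12}(L_1,L_2)$. You additionally spell out the identification of a component $\wt{L}$ with $(\wt{L}_1\times\wt{L}_2)/\Gamma_{12}(L_1,L_2) = L_1 \boxplus L_2$, which the paper's proof leaves implicit; that part is correct and a genuine improvement in completeness.

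One step deserves more care, although the imprecision is arguably inherited from the statement itself. The group $\Gamma_{12}$ you intersect with is the fibre product $\Gamma_1 \times_{\R} \Gamma_2$, i.e.\ the deck group of $\wt{M}_1\times\wt{M}_2 \to M_1\boxplus M_2$, and your stabilizer computation concerns its action on $\pi_0(X)$ \emph{upstairs}. By contrast, your transitivity claim (``$r|_Y$ is a regular cover with deck group $\Gamma_{12}$, acting transitively on fibres and hence on $\pi_0(Y)$'') is about the deck group of $r$, which is the quotient $(\Gamma_1\times\Gamma_2)/\Gamma_{12}$ acting \emph{downstairs} on $\pi_0(Y)$. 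These are different groups acting on different sets, so orbit--stabilizer does not combine the two facts into $\pi_0(Y) = \Gamma_{12}/\Gamma_{12}(L_1,L_2)$: what your computation actually produces is the orbit of a single component of $X$ under the fibre product, whereas $\pi_0(Y)$ is the set of all such orbits, and the fibre product need not act transitively on $\pi_0(X)$ (already when $M_2$ is a point the fibre product is trivial while $\pi_0(X) = \Gamma_1/\Gamma_1(L_1)$ can be large). To make the component count airtight one must either verify this transitivity in the situation at hand or restate the count in terms of the honest deck group of $r$. The paper's own proof passes over exactly the same point, so your proposal matches it both in substance and in this gap; the stabilizer identification, which is the real content, is correct in both.
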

\begin{proof}
	The proof is much the same as in Lemma \ref{lem:Lagn_lift}. Namely, an element $(g_1,g_2) \in \Gamma_{12}$ is seen to be in the stabilizer of the action on $\pi_0(r^{-1}(L_1 \times L_2))$ if and only if $g_1 \in \Gamma_1(L_1)$ and $g_2 \in \Gamma_2(L_2)$. That is, the stabilizer is the subgroup of $\Gamma_1 \times \Gamma_2$ given by
	$$(\Gamma_1(L_1) \times \Gamma_2(L_2)) \cap \Gamma_{12}.$$
	Since $\Gamma_1(L_1) \leq \Gamma_1$ and $\Gamma_2(L_2) \leq \Gamma_2$ (notably we have injections), the above intersection is the same as just the pullback of the outer square in our diagram, as desired.
\end{proof}

\begin{rem}
	The theorem above includes the lemma in the special case that $M_2 = * = L_2$.
\end{rem}

Finally, let us include LCS geometry into these statements.

\begin{lemma} \label{lem:product_iso}
	If $L_1 \subset (M_1,E_1,\nabla_1,\omega_1)$ and $L_2 \subset (M_2,E_2,\nabla_2,\omega_2)$ are isotropic submanifolds. Then $r^{-1}(L_1 \times L_2)$ is also an isotropic submanifold (for any of the LCS structures on $M_1 \boxplus M_2$). If both $L_1$ and $L_2$ are Lagrangian, then so is $r^{-1}(L_1 \times L_2)$.
\end{lemma}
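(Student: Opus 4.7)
The plan is to verify the isotropic condition directly using the explicit formula for the LCS form on the twisted product, and then deduce the Lagrangian case from a dimension count.

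Let $\wt{L} := r^{-1}(L_1 \times L_2)$ with inclusion $\iota \colon \wt{L} \hookrightarrow M_1 \boxplus M_2$, and fix any $\Phi \in \Xi$ so that the LCS form is
$$\omega_1 \boxplus_{\Phi} (-\omega_2) = r_1^*\omega_1 - \Phi(r_2^*\omega_2) \in \Omega^2(M_1 \boxplus M_2; r_1^*E_1).$$
The key observation is that the composition $r_i \circ \iota \colon \wt{L} \to M_i$ has image in $L_i$ by construction, so it factors as $\wt{L} \xrightarrow{\bar{r}_i} L_i \xrightarrow{j_i} M_i$ where $j_i$ is the inclusion. Using isotropicity of $L_i$, i.e. $j_i^*\omega_i = 0$, I get $\iota^* r_i^*\omega_i = \bar{r}_i^* j_i^*\omega_i = 0$ for $i=1,2$.

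From here the conclusion is immediate: the pullback of $r_1^*\omega_1$ vanishes, and since $\Phi$ is a fiberwise bundle isomorphism (which commutes with pullback), $\iota^*\bigl(\Phi(r_2^*\omega_2)\bigr) = (\iota^*\Phi)\bigl(\iota^* r_2^*\omega_2\bigr) = 0$. Hence $\iota^*(\omega_1 \boxplus_{\Phi}(-\omega_2)) = 0$, so $\wt{L}$ is isotropic. Since this is true for every $\Phi \in \Xi$, the statement holds for every LCS structure in the torsor.

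For the Lagrangian claim, since $r$ is a covering map, $\dim \wt{L} = \dim(L_1 \times L_2) = \dim L_1 + \dim L_2$. If $L_1$ and $L_2$ are Lagrangian, then $\dim L_i = \tfrac{1}{2}\dim M_i$, giving
$$\dim \wt{L} = \tfrac{1}{2}(\dim M_1 + \dim M_2) = \tfrac{1}{2}\dim(M_1 \boxplus M_2),$$
and an isotropic submanifold of half the ambient dimension is Lagrangian. The main (minor) subtlety is just bookkeeping with the twisted coefficients: one must be careful that $\Phi$ is defined over all of $M_1 \boxplus M_2$ and not just $\wt{L}$, so that $\iota^*\Phi$ makes sense as a bundle map, but this is automatic from the definition of $\boxplus_\Phi$. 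No real obstacle arises.
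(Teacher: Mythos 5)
Your proof is correct. It takes a slightly different route from the paper's: the paper disposes of the lemma in one line by lifting to the trivializing covers, observing that $q_1^{-1}(L_1) \times q_2^{-1}(L_2) \subset \wt{M}_1 \times \wt{M}_2$ is isotropic (resp.\ Lagrangian) for the product structure upstairs, and letting the covering map $\wt{M}_1 \times \wt{M}_2 \to M_1 \boxplus M_2$ (a local diffeomorphism intertwining the forms) carry this down. You instead stay on $M_1 \boxplus M_2$ and compute directly with the defining formula $r_1^*\omega_1 - \Phi(r_2^*\omega_2)$, using that $r_i \circ \iota$ factors through $L_i$ so that each summand pulls back to zero, plus a dimension count for the Lagrangian case. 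The two arguments have the same essential content (each piece of the $2$-form dies because it factors through an isotropic), but yours has the virtue of making the twisted-coefficient bookkeeping explicit and of working uniformly for every $\Phi \in \Xi$ without invoking the cover, while the paper's is shorter because it reduces at once to the untwisted product situation where the statement is classical. Both are complete; no gap.
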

\begin{proof}
	This is clear since $q_1^{-1}(L_1) \times q_2^{-1}(L_2) \subset \wt{M_1} \times \wt{M_2}$ is isotropic or Lagrangian.
\end{proof}

\subsection{Lagrangian suspension.}
\label{sec:lagr-susp}

As an illustration of the Lemma above we can construct stabilisation of Lagrangians submanifolds.

\begin{ex}
	The following is a slight modification of Example \ref{ex-stabilisation} to the LCS setting, where instead of taking (twisted) products with $S^1$, we take products with twisted cotangent bundles of $S^1$.

	Suppose that $\scr{M} := (M,E,\nabla,\omega)$ is an LCS manifold with Lee class $\mathfrak{l} \in H^1(M)$, and that the period subgroup is given by $\Z\alpha$ for some $\alpha \neq 0$. Meanwhile, let recall that $S^1$ comes with a line bundle $(\underline{\R},\nabla_{\alpha})$ with holonomy class $\alpha \in \R \cong H^1(S^1)$. We may thus form the product
	$$r \colon \scr{M} \boxplus T^*_{(\underline{\R},\nabla_{\alpha})}S^1 \rightarrow M \times T^*_{(\underline{\R},\nabla_{\alpha})}S^1.$$
	By Theorem \ref{thm:exact}, all of the LCS structures on this manifold are LCS-symplectomorphic since $T^*_{(\underline{\R},\nabla_{\alpha})}S^1$ is ELCS. We note that as in Example \ref{ex-stabilisation}, the deck group $\Gamma_{12}$ of $r$ is isomorphic to the deck group $\Gamma_M$ of the minimal trivializing cover for $\scr{M}$, which has $\Gamma_M \cong \Z$ since we assume that the period subgroup has rank 1, under which the map $\Gamma_M \rightarrow \R$ is just identified with $\Z \xrightarrow{\alpha} \R$. Similarly to Example \ref{ex-stabilisation}, we have that there are diffeomorphisms
	$$\scr{M} \boxplus T^*_{(\underline{\R},\nabla_{\alpha})}S^1 \cong M \times \R^2$$
	which are natural to work with from the gauged perspective.

	Suppose now that $L \subset M$ is Lagrangian. We have $\Gamma(L) \leq \Gamma \cong \Z$. Notice that we also have the Lagrangian zero section $Z = S^1 \subset T^*_{(\underline{\R},\nabla_{\alpha})}S^1$. We may thus naturally take the product of $L$ and $Z$ as described above. We obtain Lagrangian copies
	$$L \boxplus Z \subset \scr{M} \boxplus T^*_{(\underline{\R},\nabla_{\alpha})}S^1,$$
	where the number of connected components is $\Gamma_{12}/\Gamma_{12}(L,Z)$.	There are two main possibilities for what this may look like.
	\begin{itemize}
		\item In the case that $\mathfrak{l}|_L = 0 \in H^1(L)$, we have
		$$\xymatrix{ \Gamma_{12}(L,Z) \ar@{=}[r] & 0 \ar[rr] \ar[dd] \ar@{-->}[rd] & &  0\ar@{=}[r] \ar@{^{(}->}[d] & \Gamma(L) \\ & & \Z \ar@{=}[r] \ar@{=}[d] & \Z \ar@{=}[r] \ar[d]^{*\alpha} & \Gamma_M  \\ & \Z \ar@{=}[r] \ar@{=}[d] & \Z \ar[r]^{*\alpha} \ar@{=}[d] & \R & \\ & \Gamma(Z) & \Gamma_{T^*S^1} & &}$$
		(where the middle copy of $\Z$ is $\Gamma_{12}$). We see that there are $\Z$ copies of $L \boxplus Z \cong L \times \R$ embedded as Lagrangians in $\scr{M} \boxplus T^*_{(\underline{\R},\nabla_{\alpha})}S^1$, covering $L \times S^1$, and which are freely intertwined by the deck group action of $\Gamma_{12} = \Z$ of $r$.
		
		\item If $\mathfrak{l}|_L \neq 0$, then $\Gamma(L) \hookrightarrow \Gamma$ is identified with some map $\Z \hookrightarrow \Z$ given by multiplication by some integer $k$ (which we may assume is positive), and our diagram instead looks like the following:
		$$\xymatrix{ \Gamma_{12}(L,Z) \ar@{=}[r] & \Z \ar@{=}[rr] \ar[dd]_{*k} \ar@{-->}[rd]^{*k} & &  \Z \ar@{=}[r] \ar@{^{(}->}[d]^{*k} & \Gamma(L) \\ & & \Z \ar@{=}[r] \ar@{=}[d] & \Z \ar@{=}[r] \ar[d]^{*\alpha} & \Gamma_M  \\ & \Z \ar@{=}[r] \ar@{=}[d] & \Z \ar[r]^{*\alpha} \ar@{=}[d] & \R & \\ & \Gamma(Z) & \Gamma_{T^*S^1} & &}$$
		In this case, we have
		$$L \boxplus Z = (\wt{L} \times \R)/\Z,$$
		where $\wt{L}$ is the trivializing $\Gamma(L) = \Z$-cover of $L$, $\R$ is the universal cover of $Z$, and the action of $\Z$ is by $\Gamma_{12}(L,Z)$, i.e. acting on $\wt{L}$ by its identification with $\Z$ and on $\R$ by $t \mapsto t+k$. We see that we have a diffeomorphism
		$$L \boxplus Z \cong L \times \R.$$
		There are $k$ copies of these Lagrangians, intertwined by the action of $\Gamma_{12} = \Z$, but such that $k\Z$ preserves the connected components. (Under the identification of each lift $L \boxplus Z \cong L \times \R$, the action of $k\Z$ is just on the $\R$ component.)
	\end{itemize}
\end{ex}

We will call such a Lagrangian $L\times \mathbb{R}$ a \emph{stabilisation} of $L$. Just as in the symplectic case we will show how we can interpolate in $\scr{M} \boxplus T^*_{(\underline{\R},\nabla_{\alpha})}S^1 \cong M\times \mathbb{R}^2$ between stabilisations of Hamiltonian isotopic Lagrangians. For this, fix a gauge identification of the LCS structure as $(M,\eta,\omega)$, for $\eta$ a closed $1$-form and $\omega$ a non-degenerate $2$-form, as in Definition \ref{dfn:LCS_gauge}. From the discussion in Example \ref{ex-stabilisation} we have that the LCS structure on $\scr{M} \boxplus T^*_{(\underline{\R},\nabla_{\alpha})}S^1$ is given by
$$\left(M \times \R^2, \eta, \omega+e^{\alpha t}(dt+\frac{\eta}{\alpha})\wedge ds\right).$$
(Note that we have left out $\pi^*$ to refer to $\pi^*\eta$ and $\pi^*\omega$, where $\pi \colon M \times \R^2 \rightarrow M$ is the usual projection.)

For a Lagrangian embedding $i: L\hookrightarrow M$, the corresponding Lagrangian embedding of $L\times \mathbb{R}$ is always given by
$$(q,t)\mapsto (i(q),0,0).$$
Let $H:M\times \mathbb{R}^2$ be a Hamiltonian function such that $H|_{M\times (-\infty,-T)\times \mathbb{R}}(q,t,s)=0$ and $H|_{M\times (T,\infty)\times \mathbb{R}}(q,t,s)=H_0(q)$ for some Hamiltonian function on $M$. Then near $t=\infty$ the Hamiltonian vector field of $H$ is the lift of the one corresponding to $H_0$. Therefore the image of $L\times \mathbb{R}$ is a Lagrangian cylinder interpolating between $L$ and its image under the Hamiltonian isotopy given by $H$. We call this cylinder a \emph{Lagrangian suspension} of $L$ by the Hamiltonian isotopy. In the symplectic case this has been introduced studied by Chekanov \cite{zbMATH01066457}. This allows one to define an LCS notion of Lagrangian cobordism \`a la Arnol'd \cite{Arlag1,Arlag2} and a cobordism category in a way similar to the one defined and studied by Biran and Cornea \cite{zbMATH06168510,zbMATH06384095}. The study of properties of this category goes beyond the scope of the present paper and would be the subject of future work.

\color{black}

\bibliographystyle{plain}
\bibliography{Bibliographie_en}

\end{document}